%% file: output.tex
\documentclass[11pt]{article}

\usepackage{amsmath}
\makeatletter
\let\save@mathaccent\mathaccent
\newcommand*\if@single[3]{%
  \setbox0\hbox{${\mathaccent"0362{#1}}^H$}%
  \setbox2\hbox{${\mathaccent"0362{\kern0pt#1}}^H$}%
  \ifdim\ht0=\ht2 #3\else #2\fi
  }
\newcommand*\rel@kern[1]{\kern#1\dimexpr\macc@kerna}
\newcommand*\widebar[1]{\@ifnextchar^{{\wide@bar{#1}{0}}}{\wide@bar{#1}{1}}}
\newcommand*\wide@bar[2]{\if@single{#1}{\wide@bar@{#1}{#2}{1}}{\wide@bar@{#1}{#2}{2}}}
\newcommand*\wide@bar@[3]{%
  \begingroup
  \def\mathaccent##1##2{%
    \let\mathaccent\save@mathaccent
    \if#32 \let\macc@nucleus\first@char \fi
    \setbox\z@\hbox{$\macc@style{\macc@nucleus}_{}$}%
    \setbox\tw@\hbox{$\macc@style{\macc@nucleus}{}_{}$}%
    \dimen@\wd\tw@
    \advance\dimen@-\wd\z@
    \divide\dimen@ 3
    \@tempdima\wd\tw@
    \advance\@tempdima-\scriptspace
    \divide\@tempdima 10
    \advance\dimen@-\@tempdima
    \ifdim\dimen@>\z@ \dimen@0pt\fi
    \rel@kern{0.6}\kern-\dimen@
    \if#31
      \overline{\rel@kern{-0.6}\kern\dimen@\macc@nucleus\rel@kern{0.4}\kern\dimen@}%
      \advance\dimen@0.4\dimexpr\macc@kerna
      \let\final@kern#2%
      \ifdim\dimen@<\z@ \let\final@kern1\fi
      \if\final@kern1 \kern-\dimen@\fi
    \else
      \overline{\rel@kern{-0.6}\kern\dimen@#1}%
    \fi
  }%
  \macc@depth\@ne
  \let\math@bgroup\@empty \let\math@egroup\macc@set@skewchar
  \mathsurround\z@ \frozen@everymath{\mathgroup\macc@group\relax}%
  \macc@set@skewchar\relax
  \let\mathaccentV\macc@nested@a
  \if#31
    \macc@nested@a\relax111{#1}%
  \else
    \def\gobble@till@marker##1\endmarker{}%
    \futurelet\first@char\gobble@till@marker#1\endmarker
    \ifcat\noexpand\first@char A\else
      \def\first@char{}%
    \fi
    \macc@nested@a\relax111{\first@char}%
  \fi
  \endgroup
}
\makeatother
\usepackage[authoryear]{natbib}
\usepackage{yhmath}
\usepackage{amssymb}
\usepackage{amsfonts}
\usepackage{mathrsfs}
\usepackage{graphicx}
\usepackage{color,xcolor}
\usepackage{float}
\usepackage{subcaption}
\usepackage[normalem]{ulem}
\usepackage[linktocpage,colorlinks,linkcolor=blue,anchorcolor=blue, citecolor=blue,urlcolor=blue]{hyperref}
\usepackage{pdfpages}
\usepackage[inline]{enumitem}
\usepackage{multirow}
\usepackage{makecell}
\usepackage{breakcites}
\usepackage{bbm}
\usepackage{wrapfig}
\usepackage{wrapstuff}
\usepackage[thinc]{esdiff}
\allowdisplaybreaks[4]
\usepackage{rotating}
\usepackage{scalerel,stackengine}
\usepackage{cases}
\usepackage{mleftright}
\usepackage[inline]{asymptote}
\usepackage{lmodern}
\usepackage{microtype}
\newtheorem{theorem}{Theorem}[section]
\newtheorem{definition}[theorem]{Definition}
\newtheorem{lemma}[theorem]{Lemma}

\newtheorem{corollary}[theorem]{Corollary}
\newtheorem{conjecture}[theorem]{Conjecture}
\newtheorem{proposition}[theorem]{Proposition}

\def\bd{\boldsymbol}
\def\pn{\par\smallskip\noindent}

\newenvironment{myproof}[1] {\pn {\em Proof of {#1}.}}{\hfill $\Box$ \vskip 0.2truein}

\DeclareMathAlphabet\mathbfcal{OMS}{cmsy}{b}{n}

\newcommand{\R}{\mathbb{R}}

\newcommand{\bbJ}{\mathbb{J}}
\newcommand{\bbP}{\mathbb{P}}

\newcommand{\bbB}{\mathbb{B}}
\newcommand{\bbF}{\mathbb{F}}

\newcommand{\bbD}{\mathbb{D}}

\newcommand{\bbO}{\mathbb{O}}

\newcommand{\bbS}{\mathbb{S}}

\newcommand{\bbH}{\mathbb{H}}

\newcommand{\bbX}{\mathbb{X}}

\newcommand{\bA}{\boldsymbol{A}}
\newcommand{\bbA}{\mathbb{A}}

\newcommand{\bdd}{\boldsymbol{d}}

\newcommand{\bI}{\boldsymbol{I}}

\newcommand{\be}{\boldsymbol{e}}

\newcommand{\bH}{\boldsymbol{H}}

\newcommand{\bG}{\boldsymbol{G}}
\newcommand{\bx}{\boldsymbol{x}}
\newcommand{\by}{\boldsymbol{y}}
\newcommand{\bz}{\boldsymbol{z}}

\newcommand{\bv}{\boldsymbol{v}}
\newcommand{\bw}{\boldsymbol{w}}

\newcommand{\bU}{\boldsymbol{U}}

\newcommand{\bX}{\boldsymbol{X}}

\newcommand{\ddet}{\textnormal{det}}

\newcommand{\ddetzr}{\textnormal{det-zr}}

\newcommand{\dC}{\textnormal{C}}

\newcommand{\T}{\textnormal{T}}

\usepackage{booktabs}
\usepackage{graphicx}
\usepackage{stmaryrd}
\usepackage{mathtools}
\def\multiset#1#2{\ensuremath{\left(\kern-.3em\left(\genfrac{}{}{0pt}{}{#1}{#2}\right)\kern-.3em\right)}}

\usepackage{multicol}
\usepackage{nicematrix}
\usepackage{stackengine}
\usepackage{rotating}

\stackMath

\usepackage{algorithm}
\usepackage[noend]{algorithmic}

\usepackage{stackengine}
\stackMath
\newcommand\tsup[2][2]{%
 \def\useanchorwidth{T}%
  \ifnum#1>1%
    \stackon[-.5pt]{\tsup[\numexpr#1-1\relax]{#2}}{\scriptscriptstyle\sim}%
  \else%
    \stackon[.5pt]{#2}{\scriptscriptstyle\sim}%
  \fi%
}

\newcommand\restr[2]{{
  \left.\kern-\nulldelimiterspace 
  #1 
  \littletaller 
  \right|_{#2} 
  }}

\newcommand{\littletaller}{\mathchoice{\vphantom{\big|}}{}{}{}}

\usepackage{tikzit}
\input{sample.tikzstyles}

\usepackage{tikz}
\usepackage{tikz-3dplot}
\usetikzlibrary{calc}
\usepackage{pgfplots}
\usepackage{xxcolor}
\pgfplotsset{compat=1.16}
\usetikzlibrary{external}
\usepgfplotslibrary{fillbetween}
\usetikzlibrary{patterns}
\pgfdeclareradialshading[tikz@ball]{ball}{\pgfqpoint{0bp}{0bp}}{%
 color(0bp)=(tikz@ball!0!white);
 color(7bp)=(tikz@ball!0!white);
 color(15bp)=(tikz@ball!70!black);
 color(20bp)=(black!70);
 color(30bp)=(black!70)}
\makeatother

\tikzset{viewport/.style 2 args={
    x={({cos(-#1)*1cm},{sin(-#1)*sin(#2)*1cm})},
    y={({-sin(-#1)*1cm},{cos(-#1)*sin(#2)*1cm})},
    z={(0,{cos(#2)*1cm})}
}}

\pgfplotsset{only foreground/.style={
    restrict expr to domain={rawx*\CameraX + rawy*\CameraY + rawz*\CameraZ}{-0.05:100},
}}
\pgfplotsset{only background/.style={
    restrict expr to domain={rawx*\CameraX + rawy*\CameraY + rawz*\CameraZ}{-100:0.05}
}}

\def\addFGBGplot[#1]#2;{
    \addplot3[#1,only background, opacity=0.25] #2;
    \addplot3[#1,only foreground] #2;
}

\usetikzlibrary{3d}
\usetikzlibrary{calc}
\usetikzlibrary{babel} 

\pgfmathsetmacro\xx{1/sqrt(2)}
\pgfmathsetmacro\xy{1/sqrt(6)}
\pgfmathsetmacro\zy{sqrt(2/3)}

\usetikzlibrary{3d}
\usetikzlibrary{calc, shadings} 
\usetikzlibrary{positioning,arrows.meta}
\usetikzlibrary{trees}
\usepgfplotslibrary{fillbetween}
\DeclareMathAlphabet\mathbfcal{OMS}{cmsy}{b}{n}

\makeatletter
\def\tikz@lib@cuboid@get#1{\pgfkeysvalueof{/tikz/cuboid/#1}}

\def\tikz@lib@cuboid@setup{%
   \pgfmathsetlengthmacro{\vxx}%
      {\tikz@lib@cuboid@get{xscale}*cos(\tikz@lib@cuboid@get{xangle})*1cm}
   \pgfmathsetlengthmacro{\vxy}%
      {\tikz@lib@cuboid@get{xscale}*sin(\tikz@lib@cuboid@get{xangle})*1cm}
   \pgfmathsetlengthmacro{\vyx}%
      {\tikz@lib@cuboid@get{yscale}*cos(\tikz@lib@cuboid@get{yangle})*1cm}
   \pgfmathsetlengthmacro{\vyy}%
      {\tikz@lib@cuboid@get{yscale}*sin(\tikz@lib@cuboid@get{yangle})*1cm}
   \pgfmathsetlengthmacro{\vzx}%
      {\tikz@lib@cuboid@get{zscale}*cos(\tikz@lib@cuboid@get{zangle})*1cm}
   \pgfmathsetlengthmacro{\vzy}%
      {\tikz@lib@cuboid@get{zscale}*sin(\tikz@lib@cuboid@get{zangle})*1cm}
}

\def\tikz@lib@cuboid@draw#1--#2--#3\pgf@stop{%
    \begin{scope}[join=bevel,x={(\vxx,\vxy)},y={(\vyx,\vyy)},z={(\vzx,\vzy)}]
       \begin{scope}[canvas is yz plane at x=#1]
          \draw[cuboid/all faces,cuboid/edges,cuboid/right face] 
                (0,0) -- ++(#2,0) -- ++(0,-#3) -- ++(-#2,0) -- cycle;
          \draw[cuboid/all grids,cuboid/right grid] (0,0) grid (#2,-#3);
       \end{scope}
       \begin{scope}[canvas is xy plane at z=0]
          \draw[cuboid/all faces,cuboid/edges,cuboid/front face] 
                (0,0) -- ++(#1,0) --  ++(0,#2) -- ++(-#1,0) -- cycle;
          \draw[cuboid/all grids,cuboid/front grid] (0,0) grid (#1,#2);
       \end{scope}
       \begin{scope}[canvas is xz plane at y=#2]
          \draw[cuboid/all faces,cuboid/edges,cuboid/top face] 
                (0,0) -- ++(#1,0) --  ++(0,-#3) -- ++(-#1,0) -- cycle;
          \draw[cuboid/all grids,cuboid/top grid] (0,0) grid (#1,-#3);
       \end{scope}
       \draw[cuboid/hidden edges] (0,#2,-#3) -- (0,0,-#3) -- (0,0,0) 
                (0,0,-#3) -- ++(#1,0,0);
       \begin{scope}[canvas is yz plane at x=#1]
          \draw[cuboid/all faces,cuboid/right face,cuboid/edges,fill opacity=0] 
                (0,0) -- ++(#2,0) -- ++(0,-#3) -- ++(-#2,0) -- cycle;
       \end{scope}
       \begin{scope}[canvas is xy plane at z=0]
          \draw[cuboid/all faces,cuboid/front face,cuboid/edges,fill opacity=0] 
                (0,0) -- ++(#1,0) --  ++(0,#2) -- ++(-#1,0) -- cycle;
       \end{scope}
       \begin{scope}[canvas is xz plane at y=#2]
          \draw[cuboid/all faces,cuboid/top face,cuboid/edges,fill opacity=0] 
                (0,0) -- ++(#1,0) --  ++(0,-#3) -- ++(-#1,0) -- cycle;
       \end{scope}
       \path (0,#2,0) coordinate (-left top front)
                      coordinate (-left front top)
                      coordinate (-top left front)
                      coordinate (-top front left)
                      coordinate (-front top left)
                      coordinate (-front left top);
       \path (0,#2,-#3) coordinate (-left top rear)
                        coordinate (-left rear top)
                        coordinate (-top left rear)
                        coordinate (-top rear left)
                        coordinate (-rear top left)
                        coordinate (-rear left top);
       \path (0,0,-#3) coordinate (-left bottom rear)
                       coordinate (-left rear bottom)
                       coordinate (-bottom left rear)
                       coordinate (-bottom rear left)
                       coordinate (-rear bottom left)
                       coordinate (-rear left bottom);
       \path (0,0,0) coordinate (-left bottom front)
                     coordinate (-left front bottom)
                     coordinate (-bottom left front)
                     coordinate (-bottom front left)
                     coordinate (-front bottom left)
                     coordinate (-front left bottom);
       \path (#1,#2,0) coordinate (-right top front)
                       coordinate (-right front top)
                       coordinate (-top right front)
                       coordinate (-top front right)
                       coordinate (-front top right)
                       coordinate (-front right top);
       \path (#1,#2,-#3) coordinate (-right top rear)
                         coordinate (-right rear top)
                         coordinate (-top right rear)
                         coordinate (-top rear right)
                         coordinate (-rear top right)
                         coordinate (-rear right top);
       \path (#1,0,-#3) coordinate (-right bottom rear)
                        coordinate (-right rear bottom)
                        coordinate (-bottom right rear)
                        coordinate (-bottom rear right)
                        coordinate (-rear bottom right)
                        coordinate (-rear right bottom);
       \path (#1,0,0) coordinate (-right bottom front)
                      coordinate (-right front bottom)
                      coordinate (-bottom right front)
                      coordinate (-bottom front right)
                      coordinate (-front bottom right)
                      coordinate (-front right bottom);
       \coordinate (-left center) at (0,.5*#2,-.5*#3);
       \coordinate (-right center) at (#1,.5*#2,-.5*#3);
       \coordinate (-top center) at (.5*#1,#2,-.5*#3);
       \coordinate (-bottom center) at (.5*#1,0,-.5*#3);
       \coordinate (-front center) at (.5*#1,.5*#2,0);
       \coordinate (-rear center) at (.5*#1,.5*#2,-#3);
       \coordinate (-center) at (.5*#1,.5*#2,-.5*#3);
       \path (0,#2,-.5*#3) coordinate (-left top center) 
                           coordinate (-top left center);
       \path (.5*#1,#2,-#3) coordinate (-top rear center)
                            coordinate (-rear top center);
       \path (#1,#2,-.5*#3) coordinate (-right top center)
                            coordinate (-top right center);
       \path (.5*#1,#2,0) coordinate (-top front center)
                          coordinate (-front top center);
       \path (0,0,-.5*#3) coordinate (-left bottom center) 
                           coordinate (-bottom left center);
       \path (.5*#1,0,-#3) coordinate (-bottom rear center)
                            coordinate (-rear bottom center);
       \path (#1,0,-.5*#3) coordinate (-right bottom center)
                            coordinate (-bottom right center);
       \path (.5*#1,0,0) coordinate (-bottom front center)
                          coordinate (-front bottom center);
       \path (0,.5*#2,0) coordinate (-left front center) 
                           coordinate (-front left center);
       \path (0,.5*#2,-#3) coordinate (-left rear center)
                            coordinate (-rear left center);
       \path (#1,.5*#2,0) coordinate (-right front center)
                            coordinate (-front right center);
       \path (#1,.5*#2,-#3) coordinate (-right rear center)
                          coordinate (-rear right center);
    \end{scope}
}

\tikzset{
  pics/cuboid/.style = {
    setup code = \tikz@lib@cuboid@setup,
    background code = \tikz@lib@cuboid@draw#1\pgf@stop
  },
  pics/cuboid/.default={1--1--1},
  cuboid/.is family,
  cuboid,
  all faces/.style={fill=white},
  all grids/.style={draw=none},
  front face/.style={},
  front grid/.style={},
  right face/.style={},
  right grid/.style={},
  top face/.style={},
  top grid/.style={},
  edges/.style={},
  hidden edges/.style={draw=none},
  xangle/.initial=0,
  yangle/.initial=90,
  zangle/.initial=210,
  xscale/.initial=1,
  yscale/.initial=1,
  zscale/.initial=0.5
}

\newcommand{\tikzcuboidreset}{
\tikzset{cuboid,
  all faces/.style={fill=white},
  all grids/.style={draw=none},
  front face/.style={},
  front grid/.style={},
  right face/.style={},
  right grid/.style={},
  top face/.style={},
  top grid/.style={},
  edges/.style={},
  hidden edges/.style={draw=none},
  xangle=0,
  yangle=90,
  zangle=210,
  xscale=1,
  yscale=1,
  zscale=0.5
}
}

\newcommand{\tikzcuboidset}{\@ifstar\tikzcuboidset@star\tikzcuboidset@nostar} 
\newcommand{\tikzcuboidset@nostar}[1]{\tikzcuboidreset\tikzset{cuboid,#1}}
\newcommand{\tikzcuboidset@star}[1]{\tikzset{cuboid,#1}}
\makeatother

\usetikzlibrary{angles, quotes}

\makeatletter
\newif\ifnobrackets
\renewcommand\@cite[2]{\ifnobrackets\else[\fi{#1\if@tempswa , #2\fi}\ifnobrackets\else]\fi\nobracketsfalse}

\makeatother

\usepackage{lmodern}
\usetikzlibrary{decorations.pathmorphing}
\tikzset{snake it/.style={decorate, decoration=snake}}

\begin{document}

\title{
On the hardness of deterministic second-order optimization
\\
of functions with Lipschitz gradients 
}

\author{
Jiewen GUAN
\thanks{Department of Systems Engineering and Engineering Management, The Chinese University of Hong Kong, Shatin, New Territories, Hong Kong. Email: seemjwguan@gmail.com}
    \and
Anthony Man-Cho SO
\thanks{Department of Systems Engineering and Engineering Management, The Chinese University of Hong Kong, Shatin, New Territories, Hong Kong. Email: manchoso@se.cuhk.edu.hk}
}

\date{\today}

\maketitle

\begin{abstract}

We show that no deterministic zero-respecting algorithm (resp., (general) deterministic algorithm) can compute Goldstein approximate second-order stationary points of functions with Lipschitz continuous gradients within a finite number of (resp., no more than $n-3$ with $n$ being the input dimension) second-order oracle calls. This, among other consequences, shows that deterministic second-order weakly convex optimization is intractable.

\vspace{0.25cm}

\noindent {\bf Keywords:}
Variational analysis, second-order theory, lower bounds, oracle complexity, nonconvex nonsmooth optimization, $C^{1,1}$-functions, weak convexity, 
second-order stationarity


\end{abstract}

\section{Introduction}
Differentiable functions with 
Lipschitz continuous gradients, commonly referred to as $C^{1,1}$-functions, are prevalent across multiple disciplines; e.g., machine learning and statistics. 
As a result, understanding the optimization properties of $C^{1,1}$-functions is of considerable interest.
Owing to the substantial contributions of researchers over recent decades, the first-order analytical and algorithmic properties of 
$C^{1,1}$-optimization have been thoroughly understood; 
see, e.g.,~\citep[Section~2]{nesterov2018lectures},~\citep{carmon2020lower}, and the references therein. 
For instance, it is now folkloric that an $\varepsilon$-approximate first-order stationary point of a (lower-bounded) $C^{1,1}$-function can be found within at most $\mathcal{O}(\varepsilon^{-2})$ steps of gradient descent; see, e.g.,~\citep[Section~1.2.3]{nesterov2018lectures}.

That said, as the gradients of $C^{1,1}$-functions can potentially be nonsmooth (e.g., the Huber loss~\citep{huber1964robust}, squared hinge loss~\citep{hsieh2008dual,chang2008coordinate}, and exponential linear unit~\citep{clevert2016fast}), the 
behavior of second-order methods for optimizing $C^{1,1}$-functions
can be rather
subtle. While substantial research progress has been made along this direction (see, e.g.,~\citep[Section~13]{rockafellar2009variational}, the recent book by~\citet{mordukhovich2024second}, and the references therein), there are still several aspects that 
have received
only
limited attention; e.g., the (oracle) complexity of 
finding approximate second-order stationary points of $C^{1,1}$-functions.
Conceivably, 
as the task of finding 
approximate first-order stationary points of 
Lipschitz functions 
is
challenging (see, e.g.,~\citep{daniilidis2020pathological,zhang2020complexity,kornowski2021nips,tian2021hardness,kornowski2022jmlr,tian2022finite,jordan2023deterministic,tian2024no} for a series of recent advances shedding light on this point), so should the task of finding 
approximate second-order
stationary points of 
$C^{1,1}$-functions. 

In this paper, 
our goal is
to rigorously justify this intuition. 
Towards that end, we first 
consider
the Goldstein second-order subdifferential (Definition~\ref{def:Goldstein-2nd-subdiff}) and its associated stationarity concept (Definition~\ref{def:Goldstein-SOSP}), 
which arise naturally as second-order analogues of the 
first-order
Goldstein
framework~\citep{goldstein1977optimization},
motivated by the goal of identifying an approximate stationarity concept for which complexity guarantees can be established,
as in the first-order setting~\citep{zhang2020complexity}.
Then, we show that
for any number of iterations $m\ge 1$, any dimension $n\ge 2$ (resp., $n\ge m+3$), and any second-order deterministic zero-respecting algorithm (resp., second-order (general) deterministic algorithm) $\mathbfcal{A}$ that seeks solutions over $\R^n$, there exists some $C^{1,1}$-function $f:\R^n\rightarrow\R$ (with fixed Lipschitz constant and initial optimality gap) such that the 
iterates $\bx_0,\ldots,\bx_m$ generated by $\mathbfcal{A}$ when applied to $f$
are not 
Goldstein approximate second-order stationary
up to some fixed tolerances (Theorem~\ref{thm:lb-1} and Corollary~\ref{cor:lb-1}).
Concretely, the class of second-order deterministic zero-respecting algorithms captures most of the standard nonsmooth Newton methods built on different second-order subdifferential constructions~\citep{kummer1988newton,pang1990newton,qi1993convergence,qi1993nonsmooth,hoheisel2012generalized,mordukhovich2021generalized,khanh2023generalized}, whereas the class of second-order (general) deterministic algorithms covers virtually all common deterministic second-order methods,
including the nonconvex cutting-plane method~\citep{fuduli2004minimizing} and grid search~\citep[Section~1.1.3]{nesterov2018lectures}.
These hardness results well align with the fact that, 
without 
additional
assumptions,
no complexity upper bound is known 
for
the deterministic computation of approximate second-order stationary points of $C^{1,1}$-functions.
In particular, a key takeaway 
is that 
finding approximate
second-order stationary points of weakly convex
functions in a deterministic manner
is far from being tractable, as $C^{1,1}$-functions are automatically weakly convex; see, e.g.,~\citep[Proposition~4.12]{vial1983strong}.\footnote{It is noteworthy that weakly convex functions arise widely in machine learning and beyond; e.g., sparse dictionary learning~\citep[Example~2.4]{davis2019stochastic} and robust low-rank matrix recovery~\citep{li2020nonconvex}. For further examples, we refer interested readers to~\citep[Section~2.1]{davis2019stochastic},~\citep[Section~IV]{li2020understanding}, and the references therein.}

In the process of establishing the said lower 
bounds, which
delineate fundamental limits for subsequent algorithmic developments, 
we develop new machinery that contributes
to second-order variational analysis. 
This includes scalarization
formulae for the Clarke and Goldstein second-order subdifferentials (Proposition~\ref{prop:Clarke-marginal} and Corollary~\ref{cor:Goldstein-marginal}; see, e.g.,~\citep[Proposition~1.50]{mordukhovich2024second} for 
related results), and a second-order Goldstein subdifferential chain rule (Corollary~\ref{cor:2nd-chain-rule}).

The rest of the paper is organized as follows. We first 
lay out the notation and preliminaries in Section~\ref{sec:notation-prelim}. 
Then, we move on to presenting the main results in Section~\ref{sec:main-result}, with proofs deferred to Section~\ref{sec:proofs-main}. Finally, we conclude this paper in Section~\ref{sec:conclusion}.

\section{Notation and preliminaries}\label{sec:notation-prelim}

\subsection{Notation}
The notation in this paper adheres primarily to standard conventions. 
For any $\bx\in\R^n$, its support is defined by $\operatorname{supp}(\bx):=\{i\in[n]:x_i\neq 0\}$, where $[n]:=\{1,\ldots,n\}$. Besides, for any $\bbJ\subseteq[n]$, we use $\bx_{\bbJ}\in\R^{|\bbJ|}$ to denote the subvector of $\bx$ 
consisting of the entries indexed by $\bbJ$.
We use $\bbB^n:=\{\bx\in\R^n:\|\bx\|\le 1\}$ and $\mathbb{S}^n:=\{\bx\in\R^n:\|\bx\|= 1\}$ to denote the unit ball and the unit sphere in $\R^n$, respectively.
For any $\bA\in\R^{m\times n}$, we use 
$\|\bA\|_{\sigma}$ to denote its spectral norm.
For any $\boldsymbol{F}:\R^n\rightarrow\R^m$, we denote by $\operatorname{lip}\boldsymbol{F}(\bx)$ its Lipschitz modulus at $\bx\in\R^n$; see, e.g.,~\citep[Definition~9.1(b)]{rockafellar2009variational}.
For any $f:\R^n\rightarrow\R$, we say that it is $C^k$ for some $k=1,2,\ldots$ if it is $k$-times continuously differentiable.
For any $\mathbb{X}\subseteq\R^n$, we denote by $\operatorname{conv}(\bbX)$ its convex hull.
For $i=1,\ldots,n$, we use $\be^n_i$ to denote the $i$-th standard basis vector in $\R^n$; the superscript $n$ is often omitted when it is clear from the context.
Unless stated otherwise, all limit superiors in this paper are taken in the sense of Painlev{\'e}--Kuratowski; see, e.g.,~\citep[Equation~1.1]{mordukhovich2006variational} and~\citep[Section~5.B]{rockafellar2009variational}.
As an aside, in this paper, we do not distinguish between the maximum and supremum, nor between the minimum and infimum.

\subsection{Generalized differentiation theory and stationarity concepts}\label{sec:diff}
To start, we introduce the (first-order) Clarke and Goldstein subdifferentials.
\begin{definition}\label{def:subdiffs}
Given a 
Lipschitz function $f:\R^n\rightarrow\R$ and a point $\bx\in\R^n$:
\begin{itemize}
    \item The Clarke subdifferential of $f$ at $\bx$ is defined as 
    $$
    \partial_{\dC} f(\bx):=\operatorname{conv}\mleft(\limsup_{\by\rightarrow\bx,\,\by\in\mathbb{D}}\{\nabla f(\by)\}\mright),
    $$
    where $\mathbb{D}\subseteq\R^n$ is the set of points at which $f$ is differentiable;
    see, e.g.,~\citep[Theorem~2.5.1]{clarke1990optimization}.

    \item Given a constant $\delta\ge 0$, the Goldstein $\delta$-subdifferential of $f$ at $\bx$ is defined as
    $$
        \partial_{\delta} f(\bx):=\operatorname{conv}\mleft(\bigcup_{\by\in\bx+\delta\bbB^n}\partial_{\dC} f(\by)\mright);
    $$
    see~\citep[Section~2]{goldstein1977optimization}. 
\end{itemize}
\end{definition}

It is worth noting that for 
a Lipschitz 
function $f$ and a point $\bx$, the subdifferentials
$\partial_{\dC} f(\bx)$ and $\partial_{\delta} f(\bx)$ are always nonempty, convex, and compact; see, e.g.,~\citep[Proposition~2.1.2(a)]{clarke1990optimization} and~\citep[Proposition~2.3]{goldstein1977optimization}.
With the first-order constructions in place, we next introduce tools from second-order variational analysis. We begin with the Clarke generalized Hessian, defined 
analogously
to $\partial_{\dC} f(\bx)$.

\begin{definition}\label{def:second-subdiffs}
Given a $C^{1,1}$-function $f:\R^n\rightarrow\R$ and a point $\bx\in\R^n$:
    \begin{itemize}
        \item The Clarke generalized Hessian of $f$ at $\bx$ is defined as
        $$
            \partial_{\dC}^2 f(\bx):=\operatorname{conv}\mleft(\limsup_{\by\rightarrow\bx,\,\by\in\bbD}\mleft\{\nabla^2 f(\by)\mright\}\mright)\subseteq\R_{\operatorname{sym}}^{n\times n},
        $$
        where $\mathbb{D}\subseteq\R^n$ is the set of points at which $f$ is twice differentiable; see, e.g.,~\citep[Definition~2.6.1]{clarke1990optimization} and~\citep[Theorem~13.52]{rockafellar2009variational}.
    \end{itemize}
\end{definition}

We note that similar to the first-order constructions,
for a $C^{1,1}$-function $f$ and a point $\bx$, the generalized Hessian
$\partial_{\dC}^2 f(\bx)$ is also nonempty, convex, and compact; see, e.g.,~\citep[Proposition~2.6.2(a)]{clarke1990optimization}.
Besides, in alignment with existing second-order subdifferential constructions (see, e.g.,~\citep[Section~1.2]{mordukhovich2024second}), in what follows, we shall identify the set $\partial_{\dC}^2 f(\bx)$ with the set-valued mapping 
$$
\left(\bw\mapsto\mleft\{\bA\bw:\bA\in\partial_{\dC}^2 f(\bx)\mright\}\right):\R^n\rightrightarrows\R^n;
$$
i.e., $\partial_{\dC}^2 f(\bx)(\bw)=\mleft\{\bA\bw:\bA\in\partial_{\dC}^2 f(\bx)\mright\}$ for all $\bw\in\R^n$. 

Very naturally, the construction of $\partial_{\dC}^2 f(\bx)$ gives rise to a second-order optimality condition that parallels the smooth case; i.e., vanishing gradient and positive semidefinite Hessian.

\begin{lemma}[{\citep[Theorem~3.1]{hiriart1984generalized}}]\label{lma:Clarke-SOSP}
    Suppose that $f:\R^n\rightarrow\R$ is $C^{1,1}$, and let $\bx\in\R^n$ be a local minimizer of $f$. Then, we have
    \begin{equation}\label{eq:Clarke-SOSP}
        \nabla f(\bx)=\bd{0}\quad\text{and}\quad\min_{\bw\in\bbS^n}\max_{\bz\in\partial_{\dC}^2 f(\bx)(\bw)}\langle\bz,\bw\rangle\ge 0.
    \end{equation}
\end{lemma}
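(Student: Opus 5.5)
The plan is to reduce to the smooth one-dimensional case along each direction, and to recover the generalized Hessian through Lebourg's mean value theorem applied to the Lipschitz gradient map.

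\textbf{First-order condition.} Since $f$ is $C^{1,1}$, it is differentiable. Hence $\nabla f(\bx)=\bd{0}$ at the local minimizer $\bx$ by Fermat's rule.

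\textbf{Second-order condition.} Fix $\bw\in\bbS^n$ and set $\varphi(t):=f(\bx+t\bw)$. Then $\varphi$ is $C^{1,1}$ on $\R$, with $\varphi'(t)=\langle\nabla f(\bx+t\bw),\bw\rangle$, and $t=0$ is a local minimizer of $\varphi$ with $\varphi'(0)=0$. I would argue by contradiction and suppose that
$$
\max_{\bz\in\partial_{\dC}^2 f(\bx)(\bw)}\langle \bz,\bw\rangle=\max_{\bA\in\partial_{\dC}^2 f(\bx)}\bw^\top\bA\bw=:-2\eta<0 .
$$
The first step is upper semicontinuity of $\partial_{\dC}^2 f$. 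This follows from its definition as the convex hull of a Painlev\'e--Kuratowski limit superior of Hessians, together with local boundedness of these Hessians (which holds because $\nabla f$ is Lipschitz). Consequently there is $r>0$ such that $\bw^\top\bA\bw\le-\eta$ for every $\bA\in\partial_{\dC}^2 f(\by)$ with $\|\by-\bx\|\le r$.

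The second step applies a mean value inclusion to the Lipschitz map $\nabla f$ along the segment $[\bx,\bx+t\bw]$, for $0<|t|\le r$. Lebourg/Clarke's mean value theorem for vector-valued Lipschitz maps \citep[Proposition~2.6.5]{clarke1990optimization} gives
$$
\nabla f(\bx+t\bw)-\nabla f(\bx)\in\operatorname{conv}\Big(\textstyle\bigcup_{s\in[0,t]}\partial_{\dC}^2 f(\bx+s\bw)\Big)(t\bw).
$$
Pairing this with $\bw$ yields $\varphi'(t)\le -\eta t$ for $0<t\le r$, and symmetrically $\varphi'(t)\ge -\eta t>0$ for $-r\le t<0$. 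Integrating, $\varphi(t)-\varphi(0)=\int_0^t\varphi'\le-\eta t^2/2<0$ for small $t\neq 0$. This contradicts local minimality. Since $\bw\in\bbS^n$ was arbitrary, the min--max inequality in \eqref{eq:Clarke-SOSP} follows.

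\textbf{Main obstacle.} The delicate step is the mean value inclusion: the generalized Jacobian of $\nabla f$ must coincide with $\partial_{\dC}^2 f$ as defined via the points of twice differentiability. By Rademacher's theorem applied to $\nabla f$, these are exactly the points where the generalized Jacobian is built. One could avoid the vector mean value theorem entirely by working with the scalar function $\psi(\by):=\langle\nabla f(\by),\bw\rangle$ and using Lebourg's scalar theorem with $\partial_{\dC}\psi(\by)\subseteq\partial_{\dC}^2 f(\by)\bw$. This containment still needs a Fubini/Rademacher argument showing that $\psi$'s gradients can be approximated along full-measure sets. I would cite the known equivalence \citep[Theorem~13.52]{rockafellar2009variational} for this.
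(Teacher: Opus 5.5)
Your argument is correct. The paper gives no proof of this lemma; it imports it from \citep[Theorem~3.1]{hiriart1984generalized}. What you have written is therefore a self-contained substitute, and it takes a mildly different route from the cited source.

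The source first proves a second-order Taylor formula with a generalized-Hessian remainder. For $t>0$ there are $s_t\in(0,t)$ and $\bA_t\in\partial_{\dC}^2 f(\bx+s_t\bw)$ with $f(\bx+t\bw)-f(\bx)-t\langle\nabla f(\bx),\bw\rangle=\tfrac{t^2}{2}\,\bw^{\T}\bA_t\bw$. Local minimality and $\nabla f(\bx)=\bd{0}$ force $\bw^{\T}\bA_t\bw\ge 0$. Letting $t\downarrow 0$, local boundedness and closedness of the graph of $\partial_{\dC}^2 f$ give a cluster point $\bA\in\partial_{\dC}^2 f(\bx)$ with $\bw^{\T}\bA\bw\ge 0$.

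You instead use only the first-order mean value inclusion for the Lipschitz map $\nabla f$ \citep[Proposition~2.6.5]{clarke1990optimization}. You turn the negated conclusion into a uniform bound $\varphi'(t)\le-\eta t$ via upper semicontinuity, and then integrate. This spares you from establishing a Taylor formula. The source's direct version, by contrast, exhibits the good matrix explicitly, and its Taylor formula is reused there for sufficient and constrained conditions. The $t<0$ branch in your argument is unnecessary.

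Your ``main obstacle'' is not really one. Because $f$ is $C^1$, twice differentiability of $f$ at $\by$ means precisely that $\nabla f$ is differentiable at $\by$, with $\nabla^2 f(\by)$ its Jacobian. Hence $\partial_{\dC}^2 f(\by)$, as defined in the paper following \citep[Definition~2.6.1]{clarke1990optimization}, is literally Clarke's generalized Jacobian of $\nabla f$ at $\by$. The vector mean value theorem therefore applies verbatim, and Rademacher's theorem is needed only for nonemptiness.

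For the scalar route, the inclusion $\partial_{\dC}\psi(\by)\subseteq\partial_{\dC}^2 f(\by)(\bw)$ follows at once from Clarke's gradient formula, which lets you restrict to the full-measure set $\bbD$ \citep[Theorem~2.5.1]{clarke1990optimization}. It is also contained, with equality, in the paper's Proposition~\ref{prop:Clarke-marginal}. No separate Fubini argument or appeal to \citep[Theorem~13.52]{rockafellar2009variational} is required.
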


As an aside,
the following connection between $\partial_{\dC} f(\bx)$ and $\partial_{\dC}^2 f(\bx)$ 
is worth noting.

\begin{proposition}\label{prop:Clarke-marginal}
    Suppose that $f:\R^n\rightarrow\R$ is $C^{1,1}$. Then, we have
    $$
        \partial_{\dC}^2 f(\bx)(\bw)=\partial_{\dC} \langle\bw,\nabla f\rangle (\bx)\quad\text{for all $\bx,\bw\in\R^n$}.
    $$
\end{proposition}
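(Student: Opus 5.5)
The plan is to scalarize in the direction $\bw$ and then invoke the "blindness to null sets" property of the Clarke subdifferential. Fix $\bw\in\R^n$ and set $g:=\langle\bw,\nabla f\rangle$. Since $\nabla f$ is Lipschitz with some constant $L$, $g$ is Lipschitz, so $\partial_{\dC} g(\bx)$ is well defined. Let $\bbD$ be the set of points where $f$ is twice differentiable, i.e., where $\nabla f$ is (Fréchet) differentiable.

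\emph{Step 1 ($\bbD$ has full measure).} By Rademacher's theorem applied to the Lipschitz map $\nabla f$, the complement $\R^n\setminus\bbD$ has Lebesgue measure zero.

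\emph{Step 2 (gradient of $g$ on $\bbD$).} At every $\by\in\bbD$, the chain rule shows that $g$ is differentiable with
$$
\nabla g(\by)=\nabla^2 f(\by)^{\T}\bw=\nabla^2 f(\by)\bw .
$$
Here I use that $\nabla^2 f(\by)$ is symmetric wherever $\nabla f$ is differentiable, which is consistent with the paper's convention $\partial_{\dC}^2 f(\bx)\subseteq\R^{n\times n}_{\operatorname{sym}}$. If symmetry needs justification, I would cite the standard fact that the a.e.\ Hessian of a $C^{1,1}$-function is symmetric (Alexandrov/Rademacher). Alternatively, I would note that $\nabla^2 f(\by)$ is symmetric on a full-measure subset of $\bbD$, and that subset suffices for Step 3.

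\emph{Step 3 (excluding null sets).} By Clarke's theorem \citep[Theorem~2.5.1]{clarke1990optimization}, the limits in the definition of $\partial_{\dC} g(\bx)$ may be taken along any full-measure set of differentiability points of $g$. Taking this set to be $\bbD$ gives
$$
\partial_{\dC} g(\bx)=\operatorname{conv}\Bigl(\limsup_{\by\to\bx,\,\by\in\bbD}\{\nabla^2 f(\by)\bw\}\Bigr).
$$

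\emph{Step 4 (commuting the linear map with $\operatorname{conv}$ and $\limsup$).} It remains to pass the linear map $\Phi(\bA):=\bA\bw$ through both operations.

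For the convex hull, $\Phi(\operatorname{conv}S)=\operatorname{conv}\Phi(S)$ holds for any linear map.

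For the outer limit, I claim
$$
\Phi\Bigl(\limsup_{\by\to\bx,\,\by\in\bbD}\{\nabla^2 f(\by)\}\Bigr)=\limsup_{\by\to\bx,\,\by\in\bbD}\{\nabla^2 f(\by)\bw\}.
$$
The inclusion "$\subseteq$" follows from continuity of $\Phi$. For "$\supseteq$", take $\by_k\to\bx$ in $\bbD$ with $\nabla^2 f(\by_k)\bw\to\bz$. Since $\|\nabla^2 f(\by_k)\|_\sigma\le L$, the matrices are uniformly bounded, so a subsequence $\nabla^2 f(\by_k)$ converges to some $\bA$ in the outer limit. Then $\bA\bw=\bz$.

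Combining Step 3 with Step 4 yields
$$
\partial_{\dC}^2 f(\bx)(\bw)=\partial_{\dC}\langle\bw,\nabla f\rangle(\bx).
$$

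The main obstacle is Step 3. The set of differentiability points of $g$ may be strictly larger than $\bbD$, so one cannot directly identify the two limsup constructions. This gap is exactly what Clarke's null-set-exclusion theorem closes, and it is also where the symmetry of the a.e.\ Hessian enters.
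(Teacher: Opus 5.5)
Your proof is correct, and it takes a more direct route than the paper. The paper does not argue from the definitions. It assembles the identity from known results in second-order variational analysis: the characterization of the generalized Hessian of a $C^{1,1}$-function in \citep[Theorem~13.52]{rockafellar2009variational}, the coderivative scalarization formula \citep[Proposition~1.50]{mordukhovich2024second} (the limiting-subdifferential analogue $D^*\nabla f(\bx)(\bw)=\partial\langle\bw,\nabla f\rangle(\bx)$), some further facts from \citep{cui2021modern} and \citep{rockafellar2009variational} relating these objects to the Clarke ones, and the fact that linear maps commute with convex hulls.

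You instead work with the gradient-limit definitions themselves, in four moves:
\begin{itemize}
\item Rademacher's theorem makes $\bbD$ of full measure.
\item The chain rule gives $\nabla\langle\bw,\nabla f\rangle(\by)=\nabla^2 f(\by)\bw$ on $\bbD$.
\item Clarke's null-set theorem lets you compute $\partial_{\dC}\langle\bw,\nabla f\rangle(\bx)$ along $\bbD$, even though the differentiability set of $\langle\bw,\nabla f\rangle$ may be strictly larger.
\item The uniform bound $\|\nabla^2 f(\by)\|_{\sigma}\le L$ lets you push $\bA\mapsto\bA\bw$ through the outer limit by a compactness argument.
\end{itemize}

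Your route buys self-containedness and transparency. It uses only classical tools, and it makes visible the two genuinely delicate points that the paper's citation chain leaves to the reader: the mismatch between the differentiability sets of $f$'s gradient and of $\langle\bw,\nabla f\rangle$, and commuting a linear map with a Painlev\'e--Kuratowski outer limit. The paper's route buys brevity. It also places the proposition explicitly as the convexified counterpart of Mordukhovich's scalarization formula for the limiting second-order subdifferential.

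One small remark on Step~2: the hedge about symmetry is unnecessary. $\nabla^2 f(\by)$ is symmetric at every $\by\in\bbD$, by the classical Schwarz-type theorem for functions that are twice Fr\'echet differentiable at a single point. This is worth stating cleanly, because your fallback would not by itself close Step~4. That fallback restricts to a full-measure subset of $\bbD$ on which the Hessian is symmetric, but the outer limit defining $\partial_{\dC}^2 f(\bx)$ runs over all of $\bbD$.
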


We note 
that this connection is nothing but a scalarization formula for $\partial_{\dC}^2 f(\bx)(\bw)$ that parallels, e.g.,~\citep[Proposition~1.50]{mordukhovich2024second} and~\citep[Proposition~9.24(b)]{rockafellar2009variational}.

\begin{proof}
    This immediately follows by combining~\citep[Theorem~13.52]{rockafellar2009variational} with~\citep[Proposition~1.50]{mordukhovich2024second},~\citep[Proposition~4.3.2(b)]{cui2021modern},~\citep[Exercise~9.9]{rockafellar2009variational}, and the fact that linear maps commute with convex hulls (as can be seen directly from~\citep[Theorem~3.4]{rock1997convex}).
\end{proof}

We now turn to the central object in this 
subsection: The Goldstein second-order $\delta$-subdifferential.

\begin{definition}[Goldstein second-order $\delta$-subdifferential]\label{def:Goldstein-2nd-subdiff}
Given a constant $\delta\ge 0$, a $C^{1,1}$-function $f:\R^n\rightarrow\R$, and a point $\bx\in\R^n$, the Goldstein second-order $\delta$-subdifferential of $f$ at $\bx$ is the set-valued mapping $\partial_{\delta}^2 f(\bx):\R^n\rightrightarrows\R^n$ defined as
    $$
        \partial_{\delta}^2 f(\bx)(\bw):=\operatorname{conv}\mleft(\bigcup_{\by\in\bx+\delta\bbB^n}\partial_{\dC}^2 f(\by)(\bw)\mright)\quad\text{for all $\bw\in\R^n$}.
    $$
\end{definition}

In other words, $\partial_{\delta}^2 f(\bx)(\bw)$ is the smallest convex set that contains all possible 
$\partial_{\dC}^2 f(\by)(\bw)$
from points $\by$ with distance at most $\delta$ from $\bx$, 
in direct analogy with
$\partial_{\delta} f(\bx)$; this explains
why $\partial_{\delta}^2 f(\bx)$ is referred to as ``Goldstein''.
Note that
$\partial_{\delta}^2 f(\bx)(\bw)$ can potentially encompass a broader set of second-order generalized derivatives than $\partial_{\dC}^2 f(\bx)(\bw)$. 
Thus, 
the stationarity concept that 
$\partial_{\delta}^2 f(\bx)$ induces, 
which we introduce in Definition~\ref{def:Goldstein-SOSP}, is much weaker and thus potentially
easier to compute than (\ref{eq:Clarke-SOSP}).
Nonetheless, our main results (Theorem~\ref{thm:lb-1} and Corollary~\ref{cor:lb-1}) show that it
remains deterministically intractable in general.

\begin{definition}[Goldstein approximate second-order stationarity]\label{def:Goldstein-SOSP}
    Given constants $\varepsilon,\eta,\delta\ge 0$, a $C^{1,1}$-function $f:\R^n\rightarrow\R$, 
    and a point $\bx\in\R^n$, we say that $\bx$ is an $(\varepsilon,\eta,\delta)$-Goldstein approximate second-order stationary point of $f$ if 
    $$
        \|\nabla f(\bx)\|\le\varepsilon\quad\text{and}\quad\min_{\bw\in\bbS^n}\max_{\bz\in\partial_{\delta}^2 f(\bx)(\bw)}\langle\bz,\bw\rangle\ge-\eta.
    $$
\end{definition}

As
expected, a scalarization formula for $\partial_{\delta}^2 f(\bx)(\bw)$ can 
be established.

\begin{corollary}\label{cor:Goldstein-marginal}
    Suppose that $\delta\ge 0$ and that $f:\R^n\rightarrow\R$ is $C^{1,1}$. Then, we have
    $$
        \partial_{\delta}^2 f(\bx)(\bw)=\partial_{\delta}\langle\bw,\nabla f\rangle(\bx)\quad\text{for all $\bx,\bw\in\R^n$}.
    $$
\end{corollary}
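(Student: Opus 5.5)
This is a direct consequence of Proposition~\ref{prop:Clarke-marginal}. Fix $\bx,\bw\in\R^n$ and set $g:=\langle\bw,\nabla f\rangle$. Since $f$ is $C^{1,1}$, $\nabla f$ is Lipschitz, so $g$ is Lipschitz. Hence the Clarke and Goldstein subdifferentials of $g$ in Definition~\ref{def:subdiffs} are well defined.

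The plan is to unfold both sides pointwise and compare them. By Definition~\ref{def:Goldstein-2nd-subdiff},
$$
\partial_{\delta}^2 f(\bx)(\bw)=\operatorname{conv}\mleft(\bigcup_{\by\in\bx+\delta\bbB^n}\partial_{\dC}^2 f(\by)(\bw)\mright).
$$
For each fixed $\by$ in the ball $\bx+\delta\bbB^n$, Proposition~\ref{prop:Clarke-marginal} applied at the point $\by$ with the same direction $\bw$ gives $\partial_{\dC}^2 f(\by)(\bw)=\partial_{\dC} g(\by)$. The two unions over $\by\in\bx+\delta\bbB^n$ are therefore the same set, and so are their convex hulls. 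The convex hull of the second union is, by Definition~\ref{def:subdiffs}, exactly $\partial_\delta g(\bx)=\partial_{\delta}\langle\bw,\nabla f\rangle(\bx)$. This proves the identity.

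There is essentially no obstacle. The only points to check are that Proposition~\ref{prop:Clarke-marginal} holds at every $\by$ and not just at $\bx$, which it does since it is stated for all $\bx,\bw$, and that $g$ meets the Lipschitz hypothesis of Definition~\ref{def:subdiffs}, which is covered above. The degenerate case $\delta=0$ needs no separate treatment. There the ball is $\{\bx\}$, and the statement reduces to Proposition~\ref{prop:Clarke-marginal}, using that $\partial_{\dC} g(\bx)$ is already convex.
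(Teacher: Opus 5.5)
Your proposal is correct and matches the paper's proof: both unfold Definition~\ref{def:Goldstein-2nd-subdiff}, apply Proposition~\ref{prop:Clarke-marginal} at each $\by\in\bx+\delta\bbB^n$, and recognize the resulting convex hull as $\partial_{\delta}\langle\bw,\nabla f\rangle(\bx)$ via Definition~\ref{def:subdiffs}. Your explicit check that $\langle\bw,\nabla f\rangle$ is Lipschitz is a welcome detail that the paper leaves implicit.
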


In particular, Corollary~\ref{cor:Goldstein-marginal} implies that $\partial_{\delta}^2 f(\bx)(\bw)$ is also nonempty, convex, and compact.

\begin{proof}
    Indeed, it directly follows by combining Definition~\ref{def:Goldstein-2nd-subdiff} with Proposition~\ref{prop:Clarke-marginal} and Definition~\ref{def:subdiffs} that
$$
    \partial_{\delta}^2 f(\bx)(\bw)=\operatorname{conv}\mleft(\bigcup_{\by\in\bx+\delta\bbB^n}\partial_{\dC} \langle\bw,\nabla f\rangle (\by)\mright)=\partial_{\delta}\langle\bw,\nabla f\rangle(\bx),
$$
as desired.
\end{proof}

In later developments, we will need a second-order Goldstein subdifferential chain rule. In order to derive this, let us first make explicit a (first-order) Goldstein subdifferential chain rule hidden in the proof of~\citep[Theorem~1]{tian2024no}.

\begin{lemma}\label{lma:tian-chain-rule}
    Let $\delta\ge 0$ be a constant, $f:\R^m\rightarrow\R$ be 
    Lipschitz, and $\bU\in\R^{n\times m}$ be such that $\bU^{\T}\bU=\bI$. Suppose that $g:\R^n\rightarrow\R$ satisfies $g(\bx)=f(\bU^{\T}\bx)$ for all $\bx\in\R^n$. Then, we have
    $$
        \partial_{\delta}g(\bx)=\bU\partial_{\delta}f(\bU^{\T}\bx)\quad\text{for all $\bx\in\R^n$}.
    $$
\end{lemma}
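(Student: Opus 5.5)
The plan is to prove the identity in two stages. First establish the pointwise Clarke chain rule $\partial_{\dC} g(\by)=\bU\partial_{\dC} f(\bU^{\T}\by)$ for every $\by\in\R^n$. Then lift it to the Goldstein level by showing that the ball $\bx+\delta\bbB^n$ is mapped by $\bU^{\T}$ onto the ball $\bU^{\T}\bx+\delta\bbB^m$, and that linear maps commute with convex hulls.

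For the Clarke step, complete $\bU$ to an orthogonal matrix $[\bU\ \bU_\perp]\in\R^{n\times n}$. In the rotated coordinates $(\bU^{\T}\by,\bU_\perp^{\T}\by)$, the function $g$ reads $(\bu,\bv)\mapsto f(\bu)$. So $g$ is differentiable at $\by$ if and only if $f$ is differentiable at $\bU^{\T}\by$, and then $\nabla g(\by)=\bU\nabla f(\bU^{\T}\by)$. The direction "$g$ differentiable $\Rightarrow$ $f$ differentiable" uses $f(\bu)=g(\bU\bu+\bU_\perp\bU_\perp^{\T}\by)$. Hence $\mathbb{D}_g=\{\by:\bU^{\T}\by\in\mathbb{D}_f\}$. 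Now take a limit of gradients: if $\by_k\to\by$ with $\by_k\in\mathbb{D}_g$, then $\nabla g(\by_k)=\bU\nabla f(\bU^{\T}\by_k)$ with $\bU^{\T}\by_k\to\bU^{\T}\by$. Conversely, if $\bu_k\to\bU^{\T}\by$ with $\bu_k\in\mathbb{D}_f$, set $\by_k=\by+\bU(\bu_k-\bU^{\T}\by)$, which gives $\bU^{\T}\by_k=\bu_k$ and $\by_k\to\by$. These two directions show that the limsup sets correspond under multiplication by $\bU$. Since $\bU$ is linear, $\operatorname{conv}(\bU S)=\bU\operatorname{conv}(S)$, which gives the Clarke rule. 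The same conclusion can also be obtained from Clarke's chain rule for surjective linear maps, since $\bU^{\T}$ is onto.

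For the Goldstein step, note that $\bU^{\T}(\bx+\delta\bbB^n)=\bU^{\T}\bx+\delta\bbB^m$. The inclusion "$\subseteq$" holds because $\|\bU^{\T}\|_\sigma\le1$. The inclusion "$\supseteq$" holds because for $\bz\in\delta\bbB^m$ the point $\bx+\bU\bz$ satisfies $\|\bU\bz\|=\|\bz\|$ (as $\bU^{\T}\bU=\bI$) and is mapped to $\bU^{\T}\bx+\bz$. Therefore
\[
\bigcup_{\by\in\bx+\delta\bbB^n}\partial_{\dC} g(\by)=\bU\bigcup_{\bu\in\bU^{\T}\bx+\delta\bbB^m}\partial_{\dC} f(\bu).
\]
Taking convex hulls and commuting them with $\bU$ yields $\partial_\delta g(\bx)=\bU\partial_\delta f(\bU^{\T}\bx)$.

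The main obstacle is the exact identification of the differentiability sets and limits in the Clarke step. A naive chain rule gives only an inclusion, so the equality needs the surjectivity of $\bU^{\T}$, made concrete through the explicit lifting $\by_k$ above. Everything else is bookkeeping.
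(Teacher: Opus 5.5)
Your proof is correct, but it takes a different route from the paper's.

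\textbf{The paper's route.} The paper extends $\bU$ to an orthogonal $\bU^{\prime}$ and introduces the padded function $h(\bx)=f(\bx_{[m]})$, so that $g=h\circ{\bU^{\prime}}^{\T}$. It then combines three ingredients:
\begin{itemize}
\item the orthogonal-case Goldstein chain rule $\partial_{\delta}g(\bx)=\bU^{\prime}\partial_{\delta}h({\bU^{\prime}}^{\T}\bx)$, taken from the proof of Theorem~1 in \citet{tian2024no};
\item the separable-sum formula $\partial_{\dC}h(\by)=\partial_{\dC}f(\by_{[m]})\times\{\bd{0}\}$ from \citet{rockafellar1985extensions};
\item the fact that the coordinate projection of $\bU'^{\T}\bx+\delta\bbB^n$ onto the first $m$ coordinates is a ball of radius $\delta$ in $\R^m$.
\end{itemize}

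\textbf{Your route.} You prove the Clarke-level identity $\partial_{\dC}g(\by)=\bU\partial_{\dC}f(\bU^{\T}\by)$ directly from the gradient-limit definition. You identify the differentiability set of $g$ as the preimage under $\bU^{\T}$ of that of $f$, and you lift sequences via $\by_k=\by+\bU(\boldsymbol{u}_k-\bU^{\T}\by)$. Only then do you pass to the Goldstein level, through $\bU^{\T}(\bx+\delta\bbB^n)=\bU^{\T}\bx+\delta\bbB^m$.

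\textbf{Comparison.} Your argument is self-contained and handles the non-square $\bU$ in one stroke rather than by reduction to the orthogonal case. It also makes explicit where surjectivity of $\bU^{\T}$ is needed to get equality rather than mere inclusion, whereas the paper delegates exactly this to the cited results. The paper's version is shorter on the page because it outsources these steps. Both proofs share the same final ingredient: a ball-projection identity plus the commutation of convex hulls with linear maps.

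Two cosmetic points. In your forward direction, convergence of $\nabla f(\bU^{\T}\by_k)$ is automatic, since it equals $\bU^{\T}\nabla g(\by_k)$. The orthogonal completion is also unnecessary: $f(\boldsymbol{u})=g\bigl(\by+\bU(\boldsymbol{u}-\bU^{\T}\by)\bigr)$ already exhibits $f$ as $g$ composed with an affine map sending $\bU^{\T}\by$ to $\by$.
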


\begin{proof}
    Extend $\bU$ to an orthogonal matrix $\bU^{\prime}\in\R^{n\times n}$, and let $h:\R^n\rightarrow\R$ be such that $h(\bx)=f(\bx_{[m]})$; as a quick sanity check, we have $g(\bx)=h({\bU^{\prime}}^{\T}\bx)$. It follows from the proof of~\citep[Theorem~1]{tian2024no} that $\partial_{\delta}g(\bx)=\bU^{\prime}\partial_{\delta}h({\bU^{\prime}}^{\T}\bx)$.
    However, it also follows from the separable structure of $h$ and~\citep[Proposition~2.5]{rockafellar1985extensions} that $\partial_{\dC}h(\bx)=\partial_{\dC}f(\bx_{[m]})\times\{\bd{0}\}$. As a result, we have
    $$
    \begin{aligned}
        \partial_{\delta}h({\bU^{\prime}}^{\T}\bx)&=\operatorname{conv}\mleft(\bigcup_{\by\in{\bU^{\prime}}^{\T}\bx+\delta\bbB^n}\partial_{\dC} h(\by)\mright)=\operatorname{conv}\mleft(\bigcup_{\by\in{\bU^{\prime}}^{\T}\bx+\delta\bbB^n}
        \begin{pmatrix}
            \partial_{\dC}f(\by_{[m]}) \\
            \bd{0}
        \end{pmatrix}
        \mright)\\
        &=
        \begin{pmatrix}
            \operatorname{conv}\mleft(\bigcup_{\bz\in({\bU^{\prime}}^{\T}\bx)_{[m]}+\delta\bbB^m}\partial_{\dC}f(\bz)\mright) \\
            \bd{0}
        \end{pmatrix}
        =
        \begin{pmatrix}
            \partial_{\delta}f\bigl(({\bU^{\prime}}^{\T}\bx)_{[m]}\bigr) \\
            \bd{0}
        \end{pmatrix}
        =
        \begin{pmatrix}
            \partial_{\delta}f(\bU^{\T}\bx) \\
            \bd{0}
        \end{pmatrix},
    \end{aligned}
    $$
    where the third equality follows from the fact that
    $$
    \begin{aligned}
        \mleft\{\by_{[m]}:\by\in{\bU^{\prime}}^{\T}\bx+\delta\bbB^n\mright\}&=\mleft\{\bz:\left\|\begin{pmatrix}
            \bz \\
            \bz^{\prime}
        \end{pmatrix}-{\bU^{\prime}}^{\T}\bx\right\|\le\delta~\textnormal{for some $\bz^{\prime}\in\R^{n-m}$}\mright\}\\
        &=\mleft\{\bz:\min_{\bz^{\prime}\in\R^{n-m}}\left\|\begin{pmatrix}
            \bz \\
            \bz^{\prime}
        \end{pmatrix}-{\bU^{\prime}}^{\T}\bx\right\|\le\delta\mright\}\\
        &=\mleft\{\bz:\|\bz-({\bU^{\prime}}^{\T}\bx)_{[m]}\|\le\delta\mright\}=({\bU^{\prime}}^{\T}\bx)_{[m]}+\delta\bbB^m.
    \end{aligned}
    $$
    This, together with $\partial_{\delta}g(\bx)=\bU^{\prime}\partial_{\delta}h({\bU^{\prime}}^{\T}\bx)$ mentioned earlier, immediately implies that
    $$
        \partial_{\delta}g(\bx)=\bU^{\prime}\begin{pmatrix}
            \partial_{\delta}f(\bU^{\T}\bx) \\
            \bd{0}
        \end{pmatrix}=\bU\partial_{\delta}f(\bU^{\T}\bx),
    $$
    as desired. 
\end{proof}

We remark that Lemma~\ref{lma:tian-chain-rule} is slightly stronger than the chain rule implicit in the proof of~\citep[Theorem~1]{tian2024no}, as here $\bU$ is only required to have orthonormal columns, rather than to be orthogonal.
With Lemma~\ref{lma:tian-chain-rule} in place, we are now ready to derive the second-order chain rule.
\begin{corollary}\label{cor:2nd-chain-rule}
    Let $\delta\ge 0$ be a constant, $f:\R^m\rightarrow\R$ be $C^{1,1}$, and $\bU\in\R^{n\times m}$ be such that $\bU^{\T}\bU=\bI$. Suppose that $g:\R^n\rightarrow\R$ satisfies $g(\bx)=f(\bU^{\T}\bx)$ for all $\bx\in\R^n$. Then, we have
    $$
        \partial_{\delta}^2 g(\bx)(\bw)=\bU\partial_{\delta}^2 f(\bU^{\T}\bx)(\bU^{\T}\bw)\quad\text{for all $\bx,\bw\in\R^n$}.
    $$
\end{corollary}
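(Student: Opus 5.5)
The plan is to reduce the statement to the first-order chain rule of Lemma~\ref{lma:tian-chain-rule} by means of the scalarization formula in Corollary~\ref{cor:Goldstein-marginal}. Fix $\bx,\bw\in\R^n$. Corollary~\ref{cor:Goldstein-marginal} applied to the $C^{1,1}$-function $g$ gives
$$
\partial_{\delta}^2 g(\bx)(\bw)=\partial_{\delta}\langle\bw,\nabla g\rangle(\bx).
$$
Applied to $f$ at the point $\bU^{\T}\bx$ with direction $\bU^{\T}\bw$, it gives
$$
\partial_{\delta}^2 f(\bU^{\T}\bx)(\bU^{\T}\bw)=\partial_{\delta}\langle\bU^{\T}\bw,\nabla f\rangle(\bU^{\T}\bx).
$$
It therefore suffices to show that
$$
\partial_{\delta}\langle\bw,\nabla g\rangle(\bx)=\bU\,\partial_{\delta}\langle\bU^{\T}\bw,\nabla f\rangle(\bU^{\T}\bx).
$$

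The first step is to record that $g$ is $C^{1,1}$, so that Corollary~\ref{cor:Goldstein-marginal} applies to it. By the classical chain rule, $\nabla g(\by)=\bU\nabla f(\bU^{\T}\by)$ for every $\by\in\R^n$. This map is Lipschitz because $\|\bU\|_\sigma=\|\bU^{\T}\|_\sigma=1$.

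The second step is to identify the scalar function $\phi(\by):=\langle\bw,\nabla g(\by)\rangle$. Using the gradient formula,
$$
\phi(\by)=\langle\bw,\bU\nabla f(\bU^{\T}\by)\rangle=\langle\bU^{\T}\bw,\nabla f(\bU^{\T}\by)\rangle=\psi(\bU^{\T}\by),
$$
where $\psi(\bz):=\langle\bU^{\T}\bw,\nabla f(\bz)\rangle$ is a Lipschitz function on $\R^m$, since $\nabla f$ is Lipschitz. The pair $(\psi,\phi)$ thus has exactly the form $g=f\circ\bU^{\T}$ required in Lemma~\ref{lma:tian-chain-rule}. That lemma yields $\partial_\delta\phi(\bx)=\bU\,\partial_\delta\psi(\bU^{\T}\bx)$, which is the identity displayed above.

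There is no real obstacle in this argument. The only points needing care are the $C^{1,1}$ property of $g$, so that the scalarization applies on the $g$ side, and the adjoint identity $\langle\bw,\bU\bv\rangle=\langle\bU^{\T}\bw,\bv\rangle$, which moves the direction $\bw$ inside and produces $\bU^{\T}\bw$ in the final formula.
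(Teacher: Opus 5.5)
Your proposal is correct and follows the paper's proof essentially step for step. Both arguments scalarize via Corollary~\ref{cor:Goldstein-marginal}, use the smooth chain rule and the adjoint identity to write $\langle\bw,\nabla g\rangle=\langle\bU^{\T}\bw,\nabla f\rangle\circ\bU^{\T}$, check that $g$ is $C^{1,1}$ and the scalarized function is Lipschitz, and then conclude with Lemma~\ref{lma:tian-chain-rule}.
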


\begin{proof}
    To start, we observe through the smooth chain rule 
    that for all $\bx,\bw\in\R^n$,
    $$
        \langle\bw,\nabla g\rangle(\bx)=\langle\bw,\nabla g(\bx)\rangle=\langle\bw,\bU\nabla f(\bU^{\T}\bx)\rangle=\langle\bU^{\T}\bw,\nabla f(\bU^{\T}\bx)\rangle=\langle\bU^{\T}\bw,\nabla f\rangle(\bU^{\T}\bx).
    $$
    Besides, as 
    $f$ is $C^{1,1}$
    as assumed, we know 
    that $\langle\bU^{\T}\bw,\nabla f\rangle$ is 
    Lipschitz continuous and 
    $g$ is $C^{1,1}$.
    It then follows by combining the above facts with Corollary~\ref{cor:Goldstein-marginal} and Lemma~\ref{lma:tian-chain-rule} that
    $$
        \partial_{\delta}^2 g(\bx)(\bw)=\partial_{\delta}\langle\bw,\nabla g\rangle(\bx)=\bU\partial_{\delta}\langle\bU^{\T}\bw,\nabla f\rangle(\bU^{\T}\bx)=\bU\partial_{\delta}^2 f(\bU^{\T}\bx)(\bU^{\T}\bw),
    $$
    as desired. This completes the proof.
\end{proof}

We note in closing this 
subsection
that second-order variational analysis is a rich and 
thriving
subject that encompasses concepts, theories, and applications far beyond the preceding discussions. 
We refer interested readers to the recent book by~\citet{mordukhovich2024second}. 

\subsection{Oracles, algorithm classes, and function classes}
\subsubsection{Oracles}
In this paper, unless stated otherwise, we primarily restrict our attention to second-order oracles.
We say that $\mathbfcal{O}:\R^n\rightarrow\R\times\R^n\times2^{\R_{\operatorname{sym}}^{n\times n}}$ is the second-order oracle that corresponds to a $C^{1,1}$-function $f:\R^n\rightarrow\R$ if $\mathbfcal{O}(\bx)=\bigl(f(\bx),\nabla f(\bx),\partial_{\dC}^2 f(\bx)\bigr)$ for all $\bx\in\R^n$.
With a slight abuse of notation, we write $\mathbfcal{O}(\bx)=\bigl(f(\bx),\nabla f(\bx),\nabla^2 f(\bx)\bigr)$ when $f$ is 
$C^2$
near $\bx$ (even if $f$ may fail to be (globally) $C^{1,1}$), identifying the singleton set $\partial_{\dC}^2 f(\bx)$ with its unique element $\nabla^2 f(\bx)$.


\subsubsection{Algorithm classes}\label{sec:alg}
In this paper, an algorithm that seeks solutions over $\R^n$ (for a general oracle $\mathbfcal{O}:\R^n\rightarrow\bbO$ where $\bbO$ is an arbitrary set) consists of a (possibly randomized) initial point $\bx_0\in\R^n$ and a sequence of (possibly randomized) mappings $\bA_k:(\R^n\times\bbO)^{k+1}\rightarrow\R^n$ for $k=0,1,\ldots$. The sequence of iterates generated by the algorithm is defined recursively by
\begin{equation*}\label{eq:seq-alg}
    \bx_{k+1}:=\bA_k\bigl(\bx_0,\mathbfcal{O}(\bx_0),\ldots,\bx_k,\mathbfcal{O}(\bx_k)\bigr)\quad\text{for $k=0,1,\ldots$}.
\end{equation*}
With a slight abuse of terminology, we will occasionally call $\bx_0$ an iterate as well; i.e., $\bx_0,\bx_1,\ldots$ are collectively referred to as the iterates generated by the algorithm.
We say that an algorithm is deterministic if its constituents $\bx_0$ and $\bA_0,\bA_1,\ldots$ are so. 



As we shall only consider functions that are $C^2$ near the queried points (see Section~\ref{sec:fcn-class}), in what follows, it suffices to consider algorithms that access Hessians rather than Clarke generalized Hessians. For this purpose, let us denote by
$$
    \bbA_{\ddet}^n(m):=
    \left\{
    (\bx_0,\bA_0,\ldots,\bA_{m-1}):
    \textnormal{$\bx_0\in\R^n$ and $\bA_k:(\R^n\times\R\times\R^n\times\R_{\operatorname{sym}}^{n\times n})^{k+1}\rightarrow\R^n$}
    \right\}
$$
the set of all second-order deterministic algorithms that seek solutions over $\R^n$ with $m\ge 1$ iterations.

With the above preparations in place, we turn to recall the notion of second-order zero-respecting algorithms~\citep[Equation~5]{carmon2020lower}, adapted to our setting.
In brief, these algorithms 
can only update coordinates that have been revealed as relevant through prior oracle calls.

\begin{definition}\label{def:zero-resp}
    We say that 
    $\mathbfcal{A}=(\bx_0,\bA_0,\ldots,\bA_{m-1})\in\bbA_{\ddet}^n(m)$ is second-order zero-respecting if for every $f:\R^n\rightarrow\R$ that is $C^2$ near
    the iterates $\bx_0,\ldots,\bx_{m-1}$ generated by $\mathbfcal{A}$ when applied to (the second-order oracle that corresponds to) $f$, it holds for all $k=0,\ldots,m$ that
    \begin{equation}\label{eq:zero-resp}
        \operatorname{supp}(\bx_k)\subseteq\left(\bigcup_{0\le\ell<k}\operatorname{supp}\bigl(\nabla f(\bx_{\ell})\bigr)\right)\cup\left(\bigcup_{0\le\ell<k}\bigl\{i\in[n]:(\nabla^2 f(\bx_{\ell}))_{i}\neq\bd{0}\bigr\}\right).
    \end{equation}
    The set of all such algorithms is denoted by $\bbA_{\ddetzr}^n(m)$.
\end{definition}

It is worth noting that by letting $k=0$ in (\ref{eq:zero-resp}), we obtain $\bx_0=\bd{0}$; in other words, second-order deterministic zero-respecting algorithms are necessarily initialized at the origin.

We close this 
part
with a discussion of the scope of the algorithm class $\bbA_{\ddetzr}^n(m)\subseteq\bbA_{\ddet}^n(m)$.
As the functions we consider are indistinguishable from $C^2$-functions near the queried points,
different second-order subdifferential constructions all collapse to the Hessian at the queried points
(see, e.g.,~\citep[Proposition~1.74]{mordukhovich2024second}), and methods for smooth optimization are also applicable.
As a consequence,
the class of second-order deterministic zero-respecting algorithms (i.e., $\bbA_{\ddetzr}^n(m)$) covers a wide array of existing smooth and nonsmooth methods.
Concretely, 
this includes,
e.g.,
the gradient method~\citep[Section~1.2.3]{nesterov2018lectures}, Nesterov's accelerated gradient method~\citep{nesterov1983method,su2016differential} (as long as the extrapolated points are also treated as part of the iterates), the conjugate gradient method~\citep[Algorithm~5.4]{nocedal2006numerical} (as long as the step sizes are, e.g., determined \textit{a priori}), Newton's method~\citep[Section~1.2.4]{nesterov2018lectures} (as long as the Newton direction is taken as $-\nabla^2 f(\bx)^{\dagger}\nabla f(\bx)$ when $\nabla^2 f(\bx)$ is singular\footnote{We note in passing that this direction corresponds to the minimum-norm solution of the linear system $\nabla^2 f(\bx)\bdd+\nabla f(\bx)=\bd{0}$ (when it is satisfiable).}), nonsmooth Newton methods based on Bouligand generalized Hessians~\citep{qi1993convergence}, directional derivatives~\citep{pang1990newton}, graphical derivatives~\citep{hoheisel2012generalized,mordukhovich2021generalized}, Mordukhovich coderivatives~\citep{mordukhovich2021generalized,khanh2023generalized}, and Clarke generalized Hessians~\citep{kummer1988newton,qi1993nonsmooth} (as long as the same convention for choosing the Newton direction is adopted), the BFGS method~\citep[Algorithm~6.1]{nocedal2006numerical} (as long as the initial inverse Hessian approximation is, e.g., diagonal, and the step sizes are, e.g., determined \textit{a priori}), the cubic Newton method~\citep{nesterov2006cubic} (as long as the (cubic) regularization weights are, e.g., determined \textit{a priori}), the trust-region method~\citep[Algorithm~4.1]{nocedal2006numerical} (as long as 
the trust-region step is taken as $-\nabla^2 f(\bx)^{\dagger}\nabla f(\bx)$ when 
the trust-region constraint is redundant,
and the trial points are also treated as part of the iterates),
and any deterministic 
second-order
method that satisfies the so-called linear span assumption~\citep[Assumption~2.1.4]{nesterov2018lectures}, assuming that they are all initialized at the origin.
In particular, this implies that the hardness result in Theorem~\ref{thm:lb-1} 
applies to all of these methods.

\subsubsection{Function classes}\label{sec:fcn-class}
In this paper, we focus exclusively on subclasses of $C^{1,1}$-functions that are bounded from below. To ease the exposition, we introduce for all $\mathbfcal{A}=(\bx_0,\bA_0,\ldots,\bA_{m-1})\in\bbA_{\ddet}^n(m)$ the shorthand
$$
    \bbF_{\!C^1}^n(L,G;\mathbfcal{A}):=\left\{f:\R^n\rightarrow\R:
    \begin{gathered}
    \textnormal{$f$ is $C^1$ with $\nabla f$ being $L$-Lipschitz continuous}\\
    f(\bx_0)-\min\{f(\bx):\bx\in\R^n\}\le G\\
    \textnormal{$f$ is $C^2$ around the iterates $\bx_0,\ldots,\bx_m$ generated by $\mathbfcal{A}$}
    \end{gathered}
    \right\}.
$$
We remark that imposing the $C^2$-smoothness of $f$ around $\bx_0,\ldots,\bx_m$ is solely to expand the range of applicable algorithms; see the discussion at the end of Section~\ref{sec:alg}.
Besides, as this additional restriction only shrinks the function class, the lower bounds in Theorem~\ref{thm:lb-1} and Corollary~\ref{cor:lb-1} carry over \textit{a fortiori} to the larger class of all lower-bounded $C^{1,1}$-functions.

\section{Main results}\label{sec:main-result}

\subsection{Lower bound for second-order deterministic zero-respecting algorithms}
Our first lower bound,
the impossibility of 
computing Goldstein approximate second-order stationary points of $C^{1,1}$-functions in finite time by
algorithms in $\bbA_{\ddetzr}^n(m)$, is as follows.
\begin{theorem}\label{thm:lb-1}
    We have
    $$
        \max_{\substack{m=1,2,\ldots\\
        n=2,3,\ldots\\
        \mathbfcal{A}\in\bbA_{\ddetzr}^n(m)}}\min_{f\in\bbF_{\!C^1}^n(4.086,17/256;\mathbfcal{A})}\max_{k=0,\ldots,m}\min_{\bw\in\bbS^n}\max_{\bz\in\operatorname{conv}\mleft(\bigcup_{\by\in\bx_k+\bbB^n/8}\partial_{\dC}^2 f(\by)(\bw)\mright)}\langle\bz,\bw\rangle\le -0.0092714,
    $$
    where 
    $\bx_0,\ldots,\bx_m$
    are the iterates generated by $\mathbfcal{A}$ when applied to (the second-order oracle that corresponds to) $f$; see Section~\ref{sec:alg}.
\end{theorem}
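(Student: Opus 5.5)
Since the statement is a max--min, it suffices to fix arbitrary $m\ge 1$, $n\ge 2$ and $\mathbfcal{A}\in\bbA_{\ddetzr}^n(m)$, and to exhibit one $f\in\bbF_{\!C^1}^n(4.086,17/256;\mathbfcal{A})$ such that every iterate $\bx_k$ fails the Goldstein test with $\delta=1/8$ and $\eta=0.0092714$. The lever is the zero-respecting property (\ref{eq:zero-resp}): $\bx_0=\bd{0}$, and if $\nabla f(\bd{0})$ and $\nabla^2 f(\bd{0})$ are supported on a single coordinate, say the first, then by induction every iterate stays on the line $\R\be_1$. I would therefore take $f$ of the form $f(\bx)=h(x_1,x_2)$ with a fixed planar $C^{1,1}$ function $h:\R^2\to\R$, built so that along the whole axis $\{(t,0)\}$ the oracle output only involves the first coordinate. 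The mixed second derivative $\partial_{12}h$ must vanish on the axis, and the gradient must have zero second component there. Then the algorithm never leaves $\R\be_1$, whatever its rule.

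For the construction, the $x_2$-direction should be a hidden direction of negative curvature that can only be seen off the axis. A natural candidate is
\[
h(x_1,x_2)=\phi(x_1)+\psi(x_2),
\]
where $\psi$ is $C^2$ with $\psi''=0$ on a small neighbourhood of $0$, so that the oracle on the axis reveals nothing in $e_2$. We also need $\psi''\le -c<0$ on some interval $[a,b]\subset(0,1/8)$, and $\psi$ must be bounded below (for instance an even bump that decreases and then flattens). Evenness gives $\psi'(0)=0$. The term $\phi$ is a simple lower-bounded $C^{1,1}$ function, possibly $\phi\equiv 0$.

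Next, the Goldstein quantity at $\bx_k=(t,0,\ldots,0)$ must be bounded. By Corollary~\ref{cor:2nd-chain-rule} with $\bU=[\be_1,\be_2]$, and Corollary~\ref{cor:Goldstein-marginal}, it reduces to the planar $h$. Taking $\bw=\be_2$, $\partial_{1/8}^2 f(\bx_k)(\be_2)=\partial_{1/8}\langle \be_2,\nabla h\rangle$ is the convex hull of the values $\psi''(y_2)\be_2$ over $|y_2|\le 1/8$. Hence
\[
\max\langle\bz,\be_2\rangle=\max_{|y_2|\le1/8}\psi''(y_2).
\]
This is a problem: $\psi''=0$ near $0$, so the maximum is not negative. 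Separability therefore fails, and the negative curvature must cover the whole $1/8$-ball around each point of the axis while still being invisible at the point itself. The oracle returns only $\nabla^2 f(\bx_k)$, and we only need $(\nabla^2 f(\bx_k))_2=\bd{0}$, which means the entry $h_{22}(t,0)=0$ must hold on the axis. But $h_{22}<0$ nearby forces $h_{22}$ to be discontinuous there. This is allowed, because $f$ only needs $C^2$ near the iterates, which lie at finitely many points, while being $C^{1,1}$ globally. So the plan is adaptive: the iterates are determined by $\phi$ alone, since the oracle on the axis sees only $\phi$. Fix $\phi$, run $\mathbfcal{A}$ to obtain $t_0,\ldots,t_m$, and then add a term $\psi(x_2)\chi(x_1)$, or more simply
\[
-\tfrac{c}{2}x_2^2+\text{(small $C^{1,1}$ corrections vanishing to second order in $x_2$ near each $(t_k,0)$)}.
\]
The corrections are localized in tiny balls around the $(t_k,0)$ and cancel the $x_2^2$ curvature there, and the whole function is kept lower bounded. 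Because the cancellation balls are tiny compared with $1/8$, the convex hull over the $1/8$-ball still contains large negative $h_{22}$. However, the maximum over a convex hull of $\bz$ includes the value $0$ coming from the tiny ball. To fix this I would use $\bw$ tilted, $\bw=(\be_1\cos\theta+\be_2\sin\theta)$, or better choose the corrections so that the patched Hessian near $(t_k,0)$ is negative in $\be_2$ except exactly on a measure-zero set. Then the Clarke Hessian is the limit of nearby values, all negative, even though the pointwise Hessian at the iterate has zero second row. One way to achieve this is to make $h_{22}$ jump only at the single point, which requires $h$ to be $C^2$ near $(t_k,0)$, and that is impossible.

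The main obstacle is precisely this tension. Clarke-Hessian negativity in $\be_2$ throughout the ball, together with a $C^2$ vanishing second row at the iterates, conflicts with continuity. The resolution I would try is that the zero region is a small ball of radius $r$, and the worst $\bw$ need not be $\be_2$. One weighs the $\bz$ from inside the ball against the negative curvature elsewhere via a careful choice of $\bw$ mixing $\be_1$ and $\be_2$, using $\phi''<0$ regions. This explains the odd constants $4.086$ and $0.0092714$, which come from a numerical optimization of such a planar gadget. Finally I would check the Lipschitz constant of $\nabla f$ and the gap $17/256$ by direct computation.
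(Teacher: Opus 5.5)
Your outline has the right skeleton: confine the iterates to $\R\be_1$ via the zero-respecting property, perform a local surgery at each iterate, and test with a direction $\bw$ mixing $\be_1$ and $\be_2$. But it stops where the proof has to start. No $f$ is ever written down, and the decisive inequality is never verified.

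The first missing point concerns the iterates themselves. Since $\max_{\bz\in\operatorname{conv}(S)}\langle\bz,\bw\rangle=\max_{\bz\in S}\langle\bz,\bw\rangle$, refutation needs negativity (in the Clarke sense) at \emph{every} $\by$ in the ball, including $\by=\bx_k$. There the vanishing second row forces $\langle\nabla^2 f(\bx_k)\bw,\bw\rangle=w_1^2\,\partial_{11}f(\bx_k)$. So you need $\partial_{11}f(\bx_k)<0$ at every iterate and $w_1\neq0$. The ``tension with continuity'' you describe is a misdiagnosis: $h_{22}$ vanishing at one point and negative around it is perfectly continuous. The only obstruction is that $\bw=\be_2$ sees the value $0$ at $\bx_k$.

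Your ordering ``fix $\phi$ (possibly $\phi\equiv0$), then run $\mathbfcal{A}$'' cannot deliver $\phi''(t_k)<0$. With $\phi\equiv0$ it fails outright. More generally, a lower-bounded $\phi$ cannot be concave everywhere, and nothing makes the iterates land where $\phi''<0$. The paper instead answers every query with the resisting oracle $(0,\bd{0},-\be_1\be_1^{\T})$, and only afterwards builds all of $f$ around the sorted, $\zeta$-separated iterates:
\begin{itemize}
\item $-\tfrac12(x-x_k)^2$ is glued in $x$ through $q(\cdot;\zeta/4)$. Its curvature $+1$ is beaten by the $-1$ curvature of $q(y;1)$ on the band $|y|<1/4$, because $\bw=(10,11)^{\T}/\sqrt{221}$ has $w_2^2>w_1^2$.
\item $q(y;1)$ also supplies lower boundedness, which your $-\tfrac{c}{2}x_2^2$ does not.
\end{itemize}

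The second, and main, gap is the transition zone of the surgery. There the Hessian interpolates between $-\be_1\be_1^{\T}$ and $-\bI$ and acquires off-diagonal terms. Your hope that tiny cancellation balls suffice is unfounded. For a blend $-\tfrac12x^2-\tfrac12\bigl(1-p(x/\delta,y/\delta)\bigr)y^2$, the induced Hessian terms $y^2\partial^2 p$ and $y\,\partial p$ are $O(1)$ uniformly in $\delta$; indeed the Hessian entries satisfy $h_{ij}(x,y;\delta)=h_{ij}(x/\delta,y/\delta;1)$. So everything hinges on the shape of the bump.

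The paper takes $p_2(x,y;\delta)=(1-(x^2+y^2)/\delta^2)_+^4$ and computes $\nabla^2 g$ explicitly. A critical-point analysis then gives, on the disk,
\begin{itemize}
\item $h_{11}\le-71/98$,
\item $|h_{12}|\le0.32$,
\item $h_{22}\le0$.
\end{itemize}
Hence $(100h_{11}+220h_{12}+121h_{22})/221\le-0.0092714$. The margin is so thin that the paper notes exponent $2$ or $3$ would break the analysis.

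The same entrywise bounds give $\|\nabla^2 g\|_\sigma\le 91/64+0.64+253/125\le4.086$, and $q(\cdot;\gamma)\ge-\gamma^2/16$ gives the gap $17/256$. So the constants come from these exact estimates, not from a numerical optimization. Without a concrete gadget and these bounds, your proposal is a heuristic outline rather than a proof.
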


In 
particular, Theorem~\ref{thm:lb-1} asserts that for any $m\ge 1$, any $n\ge 2$, and any $\mathbfcal{A}\in\bbA_{\ddetzr}^n(m)$, there exists an $f\in\bbF_{\!C^1}^n(4.086,17/256;\mathbfcal{A})$,
such that the iterates $\bx_0,\ldots,\bx_m$ generated by $\mathbfcal{A}$ satisfy
\begin{equation*}
    \min_{\bw\in\bbS^n}\max_{\bz\in\partial_{1/8}^2 f(\bx_k)(\bw)}\langle\bz,\bw\rangle=\min_{\bw\in\bbS^n}\max_{\bz\in\operatorname{conv}\mleft(\bigcup_{\by\in\bx_k+\bbB^n/8}\partial_{\dC}^2 f(\by)(\bw)\mright)}\langle\bz,\bw\rangle\le -0.0092714
\end{equation*}
for all $k=0,\ldots,m$;
i.e., 
none of the iterates
$\bx_0,\ldots,\bx_m$
is $(\varepsilon,\eta,\delta)$-Goldstein approximate second-order stationary for all $\varepsilon\ge 0$, $\eta\in [0,0.0092714)$, and $\delta\in[0,1/8]$.

The proof of Theorem~\ref{thm:lb-1} is (a bit involved and thus) deferred to 
Section~\ref{sec:proof-theorem} and Appendix~\ref{app:proofs-main}.
\subsection{Lower bound for second-order (general) deterministic algorithms}
While Theorem~\ref{thm:lb-1} only applies to 
$\bbA_{\ddetzr}^n(m)$, with the help of an orthogonal transformation as employed in, e.g.,~\citep[Section~7.2.2]{nemirovskij1983problem} and the proof of~\citep[Theorem~1]{tian2024no}, it can be further extended to 
$\bbA_{\ddet}^n(m)$, albeit with a slight reduction in hardness; i.e., from finite-time 
intractability
to dimension-free 
intractability.

\begin{corollary}\label{cor:lb-1}
    We have
    $$
        \max_{\substack{m=1,2,\ldots\\
        n=m+3,m+4,\ldots\\
        \mathbfcal{A}\in\bbA_{\ddet}^n(m)}}\min_{f\in\bbF_{\!C^1}^n(4.086,17/256;\mathbfcal{A})}\max_{k=0,\ldots,m}\min_{\bw\in\bbS^n}\max_{\bz\in\operatorname{conv}\mleft(\bigcup_{\by\in\bx_k+\bbB^n/8}\partial_{\dC}^2 f(\by)(\bw)\mright)}\langle\bz,\bw\rangle\le -0.0092714,
    $$
    where 
    $\bx_0,\ldots,\bx_m$
    are the iterates generated by $\mathbfcal{A}$ when applied to (the second-order oracle that corresponds to) $f$; see Section~\ref{sec:alg}.
\end{corollary}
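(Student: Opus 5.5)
The plan is to reduce Corollary~\ref{cor:lb-1} to Theorem~\ref{thm:lb-1} by a resisting-oracle argument with adaptively chosen orthonormal coordinates. The setting is as in~\citep[Section~7.2.2]{nemirovskij1983problem} and the proof of~\citep[Theorem~1]{tian2024no}. Fix $m\ge1$, $n\ge m+3$ and $\mathbfcal{A}\in\bbA_{\ddet}^n(m)$. I would take the hard instance from Theorem~\ref{thm:lb-1} in the low dimension $d:=m+1\ge 2$ (or $d=m+2$ if the extra slack is needed). Call it $f:\R^d\rightarrow\R$, built for a suitable zero-respecting algorithm $\mathbfcal{A}'\in\bbA_{\ddetzr}^d(m)$ to be specified. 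I then set $g(\bx):=f(\bU^{\T}(\bx-\bx_0))$, where $\bU\in\R^{n\times d}$ has orthonormal columns $\bU=[\bd u_1,\ldots,\bd u_d]$ chosen adaptively.

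The first step is invariance of the quantities involved. Membership in $\bbF_{\!C^1}^n$ transfers from $f$ to $g$. The gradient $\nabla g=\bU\nabla f(\bU^{\T}\cdot)$ has the same Lipschitz constant $4.086$ because $\|\bU\|_\sigma=1$. The optimality gap is unchanged since $\bU^{\T}$ is onto $\R^d$. Local $C^2$-smoothness is preserved at points $\bx$ whenever $f$ is $C^2$ near $\bU^{\T}(\bx-\bx_0)$. For the stationarity measure, Corollary~\ref{cor:2nd-chain-rule}, with the translation absorbed, gives $\partial_{1/8}^2 g(\bx)(\bw)=\bU\partial_{1/8}^2 f(\bU^{\T}(\bx-\bx_0))(\bU^{\T}\bw)$. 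Let $\bv\in\bbS^d$ be the direction certifying $-0.0092714$ for $f$ at $\by=\bU^{\T}(\bx-\bx_0)$. Taking $\bw=\bU\bv\in\bbS^n$ yields $\langle\bU\bz,\bU\bv\rangle=\langle\bz,\bv\rangle$, so the bound for $f$ at $\by$ transfers verbatim to $g$ at $\bx$.

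The second step, the core, is building $\bU$ so that the projected iterates $\by_k:=\bU^{\T}(\bx_k-\bx_0)$ are exactly the iterates of a zero-respecting algorithm on $f$. I define $\bd u_1,\bd u_2,\ldots$ by Gram--Schmidt on the iterates. Suppose that after the queries $\bx_0,\ldots,\bx_{k}$ only the columns $\bd u_1,\ldots,\bd u_{j_k}$ have been "revealed", where $j_k$ is the size of the union of supports in~(\ref{eq:zero-resp}) for the $f$-iterates. Then the oracle answers at $\bx_0,\ldots,\bx_k$ depend only on those revealed columns and on the components of the iterates along them. When $\bx_{k+1}$ is produced, I choose the next unrevealed columns orthogonal to $\operatorname{span}\{\bx_1-\bx_0,\ldots,\bx_{k+1}-\bx_0,\bd u_1,\ldots,\bd u_{j_k}\}$. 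This span has dimension at most $k+1+j_k$, which is at most the available slack when $n\ge m+3$. With this choice, $\by_{k+1}$ is supported on the first $j_k$ coordinates, as the zero-respecting condition requires.

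The resulting map $k\mapsto\by_k$ is a legitimate element $\mathbfcal{A}'\in\bbA_{\ddetzr}^d(m)$, since it is a deterministic function of past $f$-oracle information. Applying Theorem~\ref{thm:lb-1} to $\mathbfcal{A}'$ produces $f$. The remaining unrevealed columns are then completed arbitrarily, which fixes $\bU$ and hence $g$.

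The main obstacle is the circularity: $f$ depends on $\mathbfcal{A}'$, which depends on $\bU$, which depends on answers of $f$. I would resolve it by noting that the proof of Theorem~\ref{thm:lb-1} presumably builds $f$ coordinate-by-coordinate, with answers on revealed coordinates fixed independently of later ones, in the chain-like style of~\citep{carmon2020lower}. If it does not, I would appeal to its structure via Appendix~\ref{app:proofs-main}. The dimension count $n\ge m+3$ must cover $d$ revealed directions plus the $m$ iterate directions. So the delicate bookkeeping is to check that only $d\le 3$ genuinely new directions beyond the iterate span are ever needed, or else to choose $d=2$ and use the slack accordingly.
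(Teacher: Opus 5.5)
Your first step is sound and matches the verification the paper carries out. The Lipschitz constant, the optimality gap and local $C^2$-smoothness all pass through $\bx\mapsto\bU^{\T}(\bx-\bx_0)$. The refutation transfers via Corollary~\ref{cor:2nd-chain-rule} with $\bw=\bU\bv$.

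The gap is the second step, which you identify as the main obstacle and then leave open. Your proposed resolution rests on a wrong guess about Theorem~\ref{thm:lb-1}: its hard instance is not chain-like. It lives in $\R^2$, and every query is answered by the constant resisting oracle $(0,\bd{0},-\be_1\be_1^{\T})$. The zero-respecting property is used only to force all iterates onto the $x$-axis, and $f$ is then built around whatever points were queried. So there is no coordinate-by-coordinate revelation to exploit, and deferring to the appendix is not an argument.

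Your dimension count also fails as written. With $d=m+1$, a column revealed late, or never, must be orthogonal to up to $m$ iterate differences $\bx_\ell-\bx_0$ and to the other $d-1$ columns. That requires $n\ge m+d=2m+1$, which does not follow from $n\ge m+3$ once $m\ge 3$. So the claim that $k+1+j_k$ fits into the available slack is false. You need $d\le 3$, and $d=2$ is the natural choice since Theorem~\ref{thm:lb-1} already holds in dimension two.

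Within your route, the missing idea is that the circularity disappears once $\mathbfcal{A}'$ is defined as a map on oracle histories in $\R^2$. This map reconstructs the revealed columns itself by a fixed deterministic Gram--Schmidt rule: each column is chosen when its index is revealed, orthogonal to the iterate differences produced so far and to earlier columns. It then lifts the answers through these columns, runs $\mathbfcal{A}$, and projects the new iterate. Such an $\mathbfcal{A}'$ depends on $\mathbfcal{A}$ alone and is zero-respecting on every history, so Theorem~\ref{thm:lb-1} applies to it as a black box. You then read off $\bU$ by running $\mathbfcal{A}'$ on the resulting $f$ and completing the unrevealed columns. An induction shows that $\mathbfcal{A}$ on $g$ reproduces the lifted iterates; with $d=2$ and your translation this needs only $n\ge m+2$.

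The paper sidesteps all of this by using the construction rather than the statement of Theorem~\ref{thm:lb-1}. Because the resisting oracle is constant, it first runs $\mathbfcal{A}$ against $\mathbfcal{O}(\bx)=(0,\bd{0},-\be_1\be_1^{\T})$ in $\R^n$, which fixes $\bx_0,\ldots,\bx_m$ before any function is chosen. It then builds $f_2$ via (\ref{main-eq:def-f}) on the points $(\be_1^{\T}\bx_k,0)^{\T}$. It takes one unit vector $\bv\perp\operatorname{span}\{\be_1,\bx_0,\ldots,\bx_m\}$, a subspace of dimension at most $m+2$, whence $n\ge m+3$. Finally it sets $f(\bx):=f_2(\bU^{\T}\bx)$ with $\bU=(\be_1,\bv)$. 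Since $\bU^{\T}\bx_k=(\be_1^{\T}\bx_k,0)^{\T}$, the chain rule gives $f(\bx_k)=0$, $\nabla f(\bx_k)=\bd{0}$ and $\nabla^2 f(\bx_k)=\bU(-\be_1\be_1^{\T})\bU^{\T}=-\be_1\be_1^{\T}$. So $\mathbfcal{A}$ applied to $f$ generates exactly these iterates, with no adaptivity at all.
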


The proof of Corollary~\ref{cor:lb-1}, which relies on the second-order Goldstein subdifferential chain rule established in Section~\ref{sec:diff} (i.e., Corollary~\ref{cor:2nd-chain-rule}), is deferred to 
Section~\ref{sec:proof-corollary}.


\section{Proofs of main results}\label{sec:proofs-main}
\subsection{Proof of Theorem~\ref{thm:lb-1}}\label{sec:proof-theorem}
In this subsection, we 
outline the main steps of the proof of
Theorem~\ref{thm:lb-1} 
and defer
the proofs of the auxiliary results used along the way to Appendix~\ref{app:proofs-main}, where a complete and self-contained proof
of Theorem~\ref{thm:lb-1}
is also provided.
In what follows, we shall restrict attention to the case $n=2$; for $n\ge 3$, it suffices to extend the construction to be given for $n=2$ in the canonical way.
Let $m\ge 1$ be arbitrary.

To begin, consider the resisting oracle $\mathbfcal{O}:\R^2\rightarrow\R\times\R^2\times\R_{\operatorname{sym}}^{2\times 2}$ defined for all $\bx\in\R^2$ by $\mathbfcal{O}(\bx):=(0,\bd{0},-\be_1\be_1^{\T})$
and an arbitrary $\mathbfcal{A}\in\bbA_{\ddetzr}^2(m)$;
here, $\mathbfcal{O}$ is referred to as ``resisting'' in that $-\be_1\be_1^{\T}$ is not positive semidefinite.
Let $\bx_0,\ldots,\bx_m$ be the iterates generated by $\mathbfcal{A}$ after $m$ rounds of interaction with $\mathbfcal{O}$. 
By 
the zero-respecting property of $\mathbfcal{A}$,
we have
$\bx_0=\bd{0}$ 
and 
$\operatorname{supp}(\bx_k)\subseteq \{1\}$ for all $k=0,\ldots,m$; i.e., $\bx_0,\ldots,\bx_m$ lie on the $x$-axis.
Without loss of generality, we may assume that $\bx_i\neq \bx_j$ for all $i\neq j$, and, by relabeling (and abusing notation), that $\bx_0,\ldots,\bx_m$ are sorted in ascending order of their $x$-coordinates.
In particular, this implies that $\bx_0$ may no longer be $\bd{0}$ anymore.
For ease of exposition, we also define $\bx_{-1}:=(-\infty,0)^{\T}$ and $\bx_{m+1}=(\infty,0)^{\T}$, and with a further 
abuse of notation, we denote by $x_k$ the $x$-coordinate of $\bx_k$ for $k=-1,\ldots,m+1$; e.g., $x_{-1}=-\infty$.
Inspired by existing constructions~\citep{tian2024no,jordan2023deterministic} that lower bound the complexity of Lipschitz optimization, in what follows, we shall construct a hard function in
$\bbF_{\!C^1}^2(4.086,17/256;\mathbfcal{A})$
such that
it is compatible with 
$\mathbfcal{O}$ 
along 
$\bx_0,\ldots,\bx_m$ 
but 
none of the iterates
is Goldstein approximate second-order stationary 
up to the tolerances specified in
Theorem~\ref{thm:lb-1}.

Define
$$
    \zeta:=\min\{\xi,1\}\in(0,1],\quad\text{where $\xi:=\min\bigl\{|x_i-x_j|:i,j=0,\ldots,m~\text{with}~i\neq j\bigr\}>0$}.
$$
Consider the hard function $f(\bullet,\bullet;\zeta):\R^2\rightarrow\R$ given by
\begin{equation}
\label{main-eq:def-f}
    f(x,y;\zeta):=
    \begin{dcases}
        f_k(x,y;\zeta), & |x-x_k|\le \zeta/4,\\
        q(y;1), & \textnormal{otherwise},
    \end{dcases}
\end{equation}
where for $k=0,\ldots,m$, each $\delta,\gamma>0$, and $(\bullet)_+ := \max\{\bullet,0\}$,
$$
    f_k(x,y;\zeta):=p_2(x-x_k,y;\zeta/32)\cdot\left(-\frac{1}{2} (x-x_k)^2\right)+\bigl(1-p_2(x-x_k,y;\zeta/32)\bigr)\cdot h(x-x_k,y;\zeta/4),
$$
$$
    p_2(x,y;\delta):=
    \left(1-\frac{x^2+y^2}{\delta^{2}}\right)_+^{4},
    \quad
    q(x;\gamma):=
     -\frac{1}{2}x^{2}
+\left(|x|-\frac{\gamma}{4}\right)_+^{2}
-\left(|x|-\frac{3\gamma}{4}\right)_+^{2}
+\frac{1}{2}(|x|-\gamma)_+^{2},
$$
and $h(x,y;\gamma):=q(x;\gamma)+q(y;1)$.
We remark that $p_2(\bullet,\bullet;\delta)$ is $C^3$, whereas $q(\bullet;\gamma)$ and $h(\bullet,\bullet;\gamma)$ are $C^{1,1}$. 
As 
\begin{equation*}
    h(x,y;\gamma)=-\frac{1}{2} x^2-\frac{1}{2} y^2=-2^{-1}\mleft\|\bullet\mright\|^2 (x,y)\quad\text{for all $(x,y)^{\T}\in\left[-\frac{\gamma}{4},\frac{\gamma}{4}\right]\times\left[-\frac{1}{4},\frac{1}{4}\right]$},
\end{equation*}
we see that 
around
each $\bx_k$,
the function $f(\bullet,\bullet;\zeta)$ is defined to 
coincide with the strongly concave function $-2^{-1}\mleft\|\bullet\mright\|^2$ (upon recentering at $\bx_k$), modulo the local surgery induced by 
$p_2(\bullet,\bullet;\delta)$ that smoothly interpolates between $-2^{-1}\mleft\|\bullet\mright\|^2$ and $(x,y)\mapsto -2^{-1} x^2$ (that corresponds to $\mathbfcal{O}$) and yields the $C^3$-blend $g(\bullet,\bullet;\delta):\R^2\rightarrow\R$ (which is also $C^{1,1}$; see Proposition~\ref{maintext-prop:useful-properties}) defined by
$$
    g(x,y;\delta):= p_2(x,y;\delta)\cdot\left(-\frac{1}{2} x^2\right)+\bigl(1-p_2(x,y;\delta)\bigr)\cdot\left(-\frac{1}{2} x^2-\frac{1}{2} y^2\right).
$$ 
By contrast,
away from $\bx_0,\ldots,\bx_m$, the function $f(\bullet,\bullet;\zeta)$ is interpolated such that
it remains $C^{1,1}$, becomes lower-bounded, and features a sufficiently wide infinite horizontal strip around the $x$-axis with strong 
stationarity refutation;
see Corollary~\ref{maintext-cor:all-desired}.
It is worth noting
that the purpose of carrying out the local surgeries is to 
give rise to
the 
compatibility of $f(\bullet,\bullet;\zeta)$ with $\mathbfcal{O}$ along $\bx_0,\ldots,\bx_m$, which is otherwise absent as $\nabla^2 (-2^{-1}\mleft\|\bullet\mright\|^2)(\bx) =-\bI\neq-\be_1\be_1^{\T}$,
while 
preserving the essential
smoothness and refutation properties of 
$-2^{-1}\mleft\|\bullet\mright\|^2$.

Before analyzing $f(\bullet,\bullet;\zeta)$, we first record some useful properties of $g(\bullet,\bullet;\delta)$ and $h(\bullet,\bullet;\gamma)$.

\begin{proposition}\label{maintext-prop:useful-properties}
    Let $\delta,\gamma>0$ be constants.
    The following statements hold.
    \begin{itemize}
        \item \textbf{(Stationarity refutations)} 
        Let
        $\bw:=(10,11)^{\T}/\sqrt{221}\in\bbS^{2}$. We have 
        $$
            \langle\nabla^2 g(x,y;\delta),\bw\bw^{\T}\rangle\le-0.0092714\quad\text{for all $x,y\in\R$},
        $$
        $$
            \max\bigl\{\langle\bz,\bw\rangle:\bz\in\partial_{\dC}^2 h(x,y;\gamma)(\bw)\bigr\}\le-0.095\quad\text{for all $(x,y)^{\T}\in\R\times\left(-\frac{1}{4},\frac{1}{4}\right)$}.
        $$
        \item \textbf{($C^{1,1}$-smoothness)} We have $\operatorname{lip}\nabla g(x,y;\delta)\le 4.086$ and $\operatorname{lip}\nabla h(x,y;\gamma)\le 1$ for all $x,y\in\R$.

        \item \textbf{(Oracle compatibility)} We have $g(0,0;\delta)=0$, $\nabla g(0,0;\delta)=\bd{0}$, and $\nabla^2 g(0,0;\delta)=-\be_1\be_1^{\T}$.
    \end{itemize}
\end{proposition}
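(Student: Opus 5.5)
The plan is to treat $g$ and $h$ separately. For $g$, I would first remove the parameter $\delta$ by scaling. Since $p_2(x,y;\delta)=p_2(x/\delta,y/\delta;1)$ and the two quadratics are $2$-homogeneous, we have $g(x,y;\delta)=\delta^2 g(x/\delta,y/\delta;1)$. Hence $\nabla^2 g(x,y;\delta)=\nabla^2 g(x/\delta,y/\delta;1)$, and all Hessian-based claims reduce to $\delta=1$. I would then rewrite
$$
g(x,y;1)=-\tfrac12 x^2-\tfrac12 y^2+\tfrac12 p(x,y)\,y^2,\qquad p:=p_2(\bullet,\bullet;1).
$$
With $\bv:=(x,y)^{\T}$ and $u:=1-\|\bv\|^2$, on the unit disk we have $\nabla p=-8u^3\bv$ and $\nabla^2 p=-8u^3\bI+48u^2\bv\bv^{\T}$. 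This gives
$$
\nabla^2 g=-\bI+\tfrac{y^2}{2}\nabla^2 p+y\bigl(\nabla p\,\be_2^{\T}+\be_2\nabla p^{\T}\bigr)+p\,\be_2\be_2^{\T}.
$$
Outside the unit disk this is simply $-\bI$. Since $g$ is $C^3$ (as $p$ is), $\operatorname{lip}\nabla g(x,y;\delta)=\|\nabla^2 g(x,y;\delta)\|_\sigma$.

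The oracle compatibility bullet follows by evaluating at the origin. There $g=0$, $\nabla g=\bd{0}$, $p=1$, $\nabla p=\bd{0}$ and $y=0$, so $\nabla^2 g(0,0)=-\bI+\be_2\be_2^{\T}=-\be_1\be_1^{\T}$.

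For the refutation with $\bw=(10,11)^{\T}/\sqrt{221}$, I would write the quadratic form as an explicit polynomial in $(x,y)$ on the unit disk:
$$
\langle\nabla^2 g,\bw\bw^{\T}\rangle=-1+\tfrac{y^2}{2}\bigl(-8u^3+48u^2\langle\bv,\bw\rangle^2\bigr)-16u^3y\langle\bv,\bw\rangle w_2+u^4w_2^2 .
$$
The goal is to show its maximum over $\|\bv\|\le1$ is at most $-0.0092714$; outside the disk the value is $-1$. Similarly, for the Lipschitz bound I would bound the largest absolute eigenvalue of the $2\times2$ symmetric matrix $\nabla^2 g$ by $4.086$ over the disk.

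These two polynomial optimizations are the main obstacle, since the constants are clearly tight numerical values. I would pass to polar coordinates $\bv=r(\cos\theta,\sin\theta)$, reducing each bound to a two-variable trigonometric-polynomial inequality on $[0,1]\times[0,2\pi)$. I would then certify it either by locating the critical points (a stationarity system solvable numerically with interval error control), or by a fine grid combined with an explicit Lipschitz bound on the polynomial's derivatives to cover the gaps between grid points. For the spectral norm, I would use $\|\bA\|_\sigma=\max\{|\lambda_{\pm}|\}$, with $\lambda_\pm$ given in closed form via trace and determinant.

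For $h$ the argument is elementary and uses separability. A direct computation gives $q'(\bullet;\gamma)$ piecewise linear with slope $-1$ on $|x|<\gamma/4$, $+1$ on $\gamma/4<|x|<3\gamma/4$, $-1$ on $3\gamma/4<|x|<\gamma$, and $0$ for $|x|>\gamma$. So $q'$ is continuous and $1$-Lipschitz, and $\|\nabla h(\bv)-\nabla h(\bv')\|^2\le|x-x'|^2+|y-y'|^2$, giving $\operatorname{lip}\nabla h\le1$. By Definition~\ref{def:second-subdiffs}, every twice-differentiability point has the diagonal Hessian $\operatorname{diag}(q''(x;\gamma),q''(y;1))$. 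Hence every element of $\partial_{\dC}^2 h(x,y;\gamma)$ is $\operatorname{diag}(a,b)$ with $a\in[-1,1]$. Moreover, for $|y|<1/4$ all nearby points have $q''(\cdot;1)=-1$, so $b=-1$. Therefore
$$
\langle\bz,\bw\rangle=\frac{100a-121}{221}\le-\frac{21}{221}<-0.095
$$
for every $\bz\in\partial_{\dC}^2 h(x,y;\gamma)(\bw)$, which is the second refutation.
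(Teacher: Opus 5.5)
Your treatment of $h$ and of oracle compatibility is the paper's argument: separability, $q'(\bullet;\gamma)$ being $1$-Lipschitz, and every element of $\partial_{\dC}^2 h(x,y;\gamma)$ being $\operatorname{diag}(a,-1)$ with $a\in[-1,1]$ when $|y|<1/4$. The two routes differ only in the estimates for $g$.

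You propose to maximize the actual quantities $\langle\nabla^2 g,\bw\bw^{\T}\rangle$ and $\|\nabla^2 g\|_{\sigma}$ over the disk as two-variable problems, with a numerical certificate. The paper never does this. It bounds the three entries of $\bH(\bullet,\bullet;1)$ one at a time: each $h_{ij}$ is a single polynomial whose critical-point system can be solved in closed form, giving $-91/64\le h_{11}\le -71/98$, $|h_{12}|\le 0.32$ and $-253/125\le h_{22}\le 0$. It then combines these crudely. By linearity, $\langle\bH,\bw\bw^{\T}\rangle=\frac{1}{221}(100h_{11}+220h_{12}+121h_{22})\le\frac{1}{221}\bigl(100\cdot(-\frac{71}{98})+220\cdot 0.32\bigr)$. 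For the norm, it uses $\|\bH\|_{\sigma}\le|h_{11}|+2|h_{12}|+|h_{22}|\le 4.085875$.

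So your belief that the constants are ``clearly tight'' is mistaken. The numbers $-0.0092714$ and $4.086$ are just the outputs of these entrywise combinations, and the true values are much better. The quadratic form equals $-100/221$ at the origin, and its maximum over the disk is only about $-0.43$. Gershgorin with the same entry bounds already gives $\|\bH\|_{\sigma}\le 253/125+0.32<2.35$.

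Consequently your direct route would succeed with a lot of slack and would even give sharper constants. Its weakness is that it rests on a two-variable, essentially computer-assisted verification that you have not carried out. The paper's decomposition gives up sharpness in exchange for a proof that reduces to three exact, hand-checkable critical-point computations.
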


We 
now proceed to analyze
$f(\bullet,\bullet;\zeta)$,
starting
with a basic observation on the piecewise structure of $f_k(\bullet,\bullet;\zeta)$
in terms of
$g(\bullet,\bullet;\delta)$ and $h(\bullet,\bullet;\gamma)$.
\begin{lemma}
\label{maintext-lma:trivial-obs}
    For
    all $k=0,\ldots,m$, we have 
    \begin{equation*}
        f_k(x,y;\zeta)=
        \begin{dcases}
            h(x-x_k,y;\zeta/4), & (x,y)^{\T}\notin (x_k,0)^{\T}+\frac{\zeta}{32}\bbB^2,\\
            g(x-x_k,y;\zeta/32), & (x,y)^{\T}\in\left[x_k-\frac{\zeta/4}{4},x_k+\frac{\zeta/4}{4}\right]\times\left[-\frac{1}{4},\frac{1}{4}\right].
        \end{dcases}
    \end{equation*}
\end{lemma}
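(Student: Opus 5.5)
Both cases follow by unwinding the definitions of $f_k$, $p_2$, $h$, $q$, and $g$. Throughout, we write $u:=x-x_k$, so that $f_k(x,y;\zeta)=p_2(u,y;\zeta/32)\cdot(-\tfrac12 u^2)+\bigl(1-p_2(u,y;\zeta/32)\bigr)\cdot h(u,y;\zeta/4)$.

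\emph{First case.} Suppose $(x,y)^{\T}\notin (x_k,0)^{\T}+\frac{\zeta}{32}\bbB^2$. Then $u^2+y^2>(\zeta/32)^2$, so $1-(u^2+y^2)/(\zeta/32)^2<0$. Its positive part vanishes, and hence $p_2(u,y;\zeta/32)=0$. Substituting this into the definition of $f_k$ leaves exactly $h(u,y;\zeta/4)=h(x-x_k,y;\zeta/4)$.

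\emph{Second case.} Suppose $(x,y)^{\T}\in[x_k-\zeta/16,x_k+\zeta/16]\times[-1/4,1/4]$. We apply the identity recorded just before Proposition~\ref{maintext-prop:useful-properties}, namely $h(u,y;\gamma)=-\tfrac12u^2-\tfrac12y^2$ for $(u,y)\in[-\gamma/4,\gamma/4]\times[-1/4,1/4]$, with $\gamma=\zeta/4$. Here $|u|\le\zeta/16=\gamma/4$ and $|y|\le 1/4$, so $h(u,y;\zeta/4)=-\tfrac12u^2-\tfrac12y^2$.

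To justify that identity, we check that all the hinge terms in $q$ vanish. For $|u|\le\gamma/4$, the terms $(|u|-\gamma/4)_+$, $(|u|-3\gamma/4)_+$, and $(|u|-\gamma)_+$ are all zero, so $q(u;\gamma)=-\tfrac12u^2$. The same argument with $\gamma=1$ and $|y|\le1/4$ gives $q(y;1)=-\tfrac12y^2$.

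Substituting into $f_k$ then gives $p_2(u,y;\zeta/32)(-\tfrac12u^2)+(1-p_2(u,y;\zeta/32))(-\tfrac12u^2-\tfrac12y^2)$. By definition, this is $g(u,y;\zeta/32)$.

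\emph{Consistency on the overlap.} The two regions can overlap, but no conflict arises there. On the overlap, $p_2=0$, so $g(u,y;\zeta/32)=-\tfrac12u^2-\tfrac12y^2=h(u,y;\zeta/4)$, and the two formulas agree.

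The only step that needs any care is confirming that the box in the second case lies within the quadratic region of $h$. This comes down to the numerology $\zeta/16=(\zeta/4)/4$ together with the $y$-bound $1/4$ coming from $q(\,\cdot\,;1)$.
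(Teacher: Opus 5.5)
Your proposal is correct and follows the paper's own proof: $p_2$ vanishes outside the ball, which gives $h$, and the quadratic identity for $h$ on $[-\gamma/4,\gamma/4]\times[-1/4,1/4]$ with $\gamma=\zeta/4$ turns $f_k$ into $g$ on the box. Your verification of the hinge terms and your overlap-consistency remark are harmless additions; the paper does not need them.
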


It is worth noting that the cases in Lemma~\ref{maintext-lma:trivial-obs} are collectively exhaustive, as
\begin{equation*}
    (x_k,0)^{\T}+\frac{\zeta}{32}\bbB^2\subseteq(x_k,0)^{\T}+\frac{1}{2}\cdot\left(\left[-\frac{\zeta/4}{4},\frac{\zeta/4}{4}\right]\times\left[-\frac{1}{4},\frac{1}{4}\right]\right).
\end{equation*}
With Lemma~\ref{maintext-lma:trivial-obs} in place, we are now 
ready to derive a piecewise representation of $f(\bullet,\bullet;\zeta)$.

\begin{proposition}
\label{maintext-prop:covering}
    Consider the open sets $\bbH_0,\ldots,\bbH_m$ and $\bbP_0,\ldots,\bbP_m$ defined by
    $$
        \bbH_k:=\left(\left(x_{k-1}+\frac{\zeta}{4},x_{k+1}-\frac{\zeta}{4}\right)\times\R\right)\cap\bbB_k^{\complement}
        \quad\text{and}\quad
        \bbP_k:=\left(x_k-\frac{\zeta/4}{4},x_k+\frac{\zeta/4}{4}\right)\times\left(-\frac{1}{4},\frac{1}{4}\right),
    $$
    where $\bbB_k:=(x_k,0)^{\T}+{\zeta\bbB^2}/{32}$. 
    We have
    $\bigcup_{k=0}^m\bbH_k\cup\bigcup_{k=0}^m\bbP_k=\R^2$ and
    $$
        f(x,y;\zeta)=
        \begin{dcases}
            h(x-x_k,y;\zeta/4), & (x,y)^{\T}\in\bbH_k,\\
            g(x-x_k,y;\zeta/32), & (x,y)^{\T}\in\bbP_k.
        \end{dcases}
    $$
\end{proposition}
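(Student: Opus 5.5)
The proposition has two parts: the covering claim and the piecewise formula. Both will be reduced to the definition (\ref{main-eq:def-f}) and Lemma~\ref{maintext-lma:trivial-obs}. The one structural fact used throughout is that the slabs $\{|x-x_k|\le\zeta/4\}$, $k=0,\ldots,m$, are pairwise disjoint. This holds because consecutive $x_k$ differ by at least $\xi\ge\zeta$: two such slabs could meet only at a single boundary point when $|x_{k+1}-x_k|=\zeta/2$, which is impossible since $\zeta/2<\zeta\le\xi$. Hence the first branch of (\ref{main-eq:def-f}) is unambiguous, and on the slab around $x_k$ we have $f=f_k$.

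\emph{Covering.} Fix $(x,y)^{\T}\in\R^2$. Since the $x_k$ are sorted with $x_{-1}=-\infty$ and $x_{m+1}=\infty$, and since $x_{k+1}-x_k\ge\zeta>\zeta/2$, the open intervals $(x_{k-1}+\zeta/4,x_{k+1}-\zeta/4)$, $k=0,\ldots,m$, cover $\R$: consecutive intervals overlap on $(x_k+\zeta/4,\,x_k+\zeta\cdot 3/4)$, roughly. So some $k$ has $x$ in the $k$-th interval. If $(x,y)^{\T}\notin\bbB_k$, then $(x,y)^{\T}\in\bbH_k$ and we are done. Otherwise $(x,y)^{\T}\in\bbB_k$, an open-or-closed ball of radius $\zeta/32$, which lies inside $\bbP_k$ because $\zeta/32<\zeta/16$ and $\zeta/32<1/4$. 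The only care needed is that $\bbB_k$ is closed while $\bbP_k$ is open; strict inclusion of radii handles this.

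\emph{Formula on $\bbP_k$.} Take $(x,y)^{\T}\in\bbP_k$. Then $|x-x_k|<\zeta/16\le\zeta/4$, so $f=f_k$. The point also lies in the closed rectangle of the second case of Lemma~\ref{maintext-lma:trivial-obs}, which gives $f=g(x-x_k,y;\zeta/32)$.

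\emph{Formula on $\bbH_k$.} Take $(x,y)^{\T}\in\bbH_k$ and split into two cases.
\begin{itemize}
\item[(i)] If $|x-x_k|\le\zeta/4$, then $f=f_k$. Since the point is outside $\bbB_k$, the first case of Lemma~\ref{maintext-lma:trivial-obs} gives $f=h(x-x_k,y;\zeta/4)$.
\item[(ii)] Otherwise $x$ lies in $(x_{k-1}+\zeta/4,x_k-\zeta/4)$ or $(x_k+\zeta/4,x_{k+1}-\zeta/4)$, so $|x-x_j|>\zeta/4$ for every $j$ (by sortedness, the nearest nodes are $x_{k\pm1}$ and $x_k$). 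Thus $f=q(y;1)$ by the second branch, and it remains to show $q(x-x_k;\zeta/4)=0$ whenever $|x-x_k|\ge\zeta/4$. This is a direct computation. With $\gamma=\zeta/4$ and $t=|x-x_k|\ge\gamma$, all three positive parts are active, and
$$-\tfrac12t^2+(t-\tfrac{\gamma}{4})^2-(t-\tfrac{3\gamma}{4})^2+\tfrac12(t-\gamma)^2=0,$$
since the $t^2$ terms cancel, the linear terms give $t\gamma-t\gamma=0$, and the constants give $\gamma^2/16-9\gamma^2/16+\gamma^2/2=0$. Hence $h(x-x_k,y;\zeta/4)=q(y;1)=f$.
\end{itemize}

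The main obstacle is this last identity: the fact that $q(\bullet;\gamma)$ vanishes identically for $|x|\ge\gamma$ is what lets the $h$-representation extend past the slab of width $\zeta/4$, and it is the only step involving a genuine computation. Everything else is interval bookkeeping, where the one subtle point is choosing closed versus open sets at the boundaries.
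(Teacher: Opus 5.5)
Your proof is correct and follows essentially the same route as the paper. It covers $\R^2$ using $\bbB_k\subseteq\bbP_k$ together with the overlapping intervals $(x_{k-1}+\zeta/4,x_{k+1}-\zeta/4)$, obtains the formula on $\bbP_k$ from $f=f_k$ and the second case of Lemma~\ref{maintext-lma:trivial-obs}, and on $\bbH_k$ splits on whether $|x-x_k|\le\zeta/4$, using that $q(\bullet;\zeta/4)$ vanishes beyond $\zeta/4$. The differences are cosmetic: you choose the interval index first and then test membership in $\bbB_k$, whereas the paper splits on $\bigcup_k\bbB_k$ first; you make explicit the disjointness of the slabs $\{|x-x_k|\le\zeta/4\}$, which the paper uses implicitly; and you check that $q$ vanishes via the positive-part formula instead of reading it off the piecewise definition.
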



By combining Proposition~\ref{maintext-prop:useful-properties} and Proposition~\ref{maintext-prop:covering} (and carrying out a few further steps), 
we have the following
properties of $f(\bullet,\bullet;\zeta)$. 

\begin{corollary}\label{maintext-cor:all-desired}
    The following statements hold.
    \begin{itemize}
        \item \textbf{($C^{1,1}$-smoothness)} The function $f(\bullet,\bullet;\zeta)$ is $C^{1}$ with $\nabla f(\bullet,\bullet;\zeta)$ being $4.086$-Lipschitz. 
        
        \item \textbf{(Lower boundedness)} We have $f(0,0;\zeta)-\min\{f(x,y;\zeta):x,y\in\R\}\le 17/256$.

        \item \textbf{(Local $C^{3}$-smoothness)} The function $f(\bullet,\bullet;\zeta)$ is $C^3$ (and thus $C^2$) around $\bx_0,\ldots,\bx_m$.
    
        \item \textbf{(Oracle compatibility)} The function $f(\bullet,\bullet;\zeta)$ is compatible with $\mathbfcal{O}$ along $\bx_0,\ldots,\bx_m$.

        \item \textbf{(Band refutation)} 
        Let
        $\bw:=(10,11)^{\T}/\sqrt{221}\in\bbS^{2}$. We have 
    $$
        \max\mleft\{\langle\bz,\bw\rangle:\bz\in\operatorname{conv}\mleft(\bigcup_{(x,y)^{\T}\in\R\times(-1/4,1/4)}\partial_{\dC}^2 f(x,y;\zeta)(\bw)\mright)\mright\}\le -0.0092714.
    $$
    \end{itemize}
\end{corollary}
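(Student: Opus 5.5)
The plan is to derive each bullet from the piecewise representation in Proposition~\ref{maintext-prop:covering} together with the properties of $g$ and $h$ in Proposition~\ref{maintext-prop:useful-properties}. All the pieces $\bbH_k,\bbP_k$ are open and cover $\R^2$, and on each piece $f(\bullet,\bullet;\zeta)$ is a translate of $h(\bullet,\bullet;\zeta/4)$ or of $g(\bullet,\bullet;\zeta/32)$. Hence local properties transfer immediately: $f$ is $C^1$ everywhere, and $\operatorname{lip}\nabla f(\bx)\le\max\{4.086,1\}=4.086$ at every $\bx$.

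\textbf{$C^{1,1}$-smoothness.} To pass from the pointwise modulus to a global Lipschitz constant, I would integrate along segments. For $\bx,\by\in\R^2$, the map $t\mapsto\nabla f(\bx+t(\by-\bx))$ has local Lipschitz modulus at most $4.086\|\by-\bx\|$ at every $t$. A compactness/covering argument on $[0,1]$, or equivalently the standard fact that a uniformly bounded $\operatorname{lip}$ on a convex set yields a global Lipschitz bound (see, e.g.,~\citep[Theorem~9.2]{rockafellar2009variational}), then gives $\|\nabla f(\by)-\nabla f(\bx)\|\le 4.086\|\by-\bx\|$.

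\textbf{Local smoothness and oracle compatibility.} Since $\bx_k\in\bbP_k$ and $\bbP_k$ is open, $f=g(\bullet-x_k,\bullet;\zeta/32)$ near $\bx_k$, and $g$ is $C^3$ as a product and sum of $C^3$ functions. Oracle compatibility then follows from the third bullet of Proposition~\ref{maintext-prop:useful-properties} after translation: $f(\bx_k)=0$, $\nabla f(\bx_k)=\bd 0$, and $\nabla^2 f(\bx_k)=-\be_1\be_1^{\T}$.

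\textbf{Lower boundedness.} Here I compute directly with $q$. For $|x|\ge\gamma$, expanding gives $q(x;\gamma)=-\tfrac12x^2+(|x|-\gamma/4)^2-(|x|-3\gamma/4)^2+\tfrac12(|x|-\gamma)^2$. The linear terms cancel, leaving $q(x;\gamma)=-\tfrac12\gamma|x|+\tfrac{\gamma^2}{2}-\tfrac{\gamma}{2}\cdot$(constant), so a more careful bookkeeping is needed. Minimizing piecewise over the four regions of $|x|$ should show $\min q(\bullet;\gamma)=-c\gamma^2$ for an explicit $c$. For $g$, I would bound it below on the relevant bounded region $\bbP_k$ by $-\tfrac12(x^2+y^2)$, which is at least $-\tfrac12((\zeta/16)^2+1/16)$. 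Combining these with $f(\bx_0)=0$ (up to relabeling, since the origin is one of the iterates), the total gap should come out to at most $17/256$. I expect this numerical bookkeeping to be where care is needed, and I would double-check that the constants in $q$ make it bounded below. The $+\tfrac12(|x|-\gamma)_+^2$ term gives $q$ positive curvature $\tfrac12$ beyond $\gamma$, so $q$ is coercive.

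\textbf{Band refutation.} This is the main obstacle. The key point is that $\partial^2_{\dC}f(\bx)(\bw)$ depends only on $f$ near $\bx$. For $\bx$ in the band $\R\times(-1/4,1/4)$, choose a piece containing $\bx$. If $\bx\in\bbP_k$, then $\partial^2_{\dC}f(\bx)(\bw)=\{\nabla^2 g\,\bw\}$ and $\langle\nabla^2 g\,\bw,\bw\rangle\le-0.0092714$. If $\bx\in\bbH_k$ with $|y|<1/4$, the $h$-bound gives at most $-0.095$. The set $\{\bz:\langle\bz,\bw\rangle\le-0.0092714\}$ is a closed half-space, hence convex, so the convex hull over the band stays inside it, and the maximum is at most $-0.0092714$.
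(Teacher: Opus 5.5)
Your arguments for the $C^{1,1}$, local $C^3$, oracle-compatibility and band-refutation bullets match the paper's: the open cover $\{\bbH_k,\bbP_k\}$ transfers local properties, Rockafellar--Wets Theorem~9.2 turns the pointwise modulus into a global constant, and the convex hull stays inside a closed half-space. Those parts are fine. The gap is in the lower-boundedness bullet. There you only say the bound ``should come out to at most $17/256$'', and the computation you give in support is wrong.

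\textbf{Behaviour of $q$ beyond $\gamma$.} For $t=|x|\ge\gamma$ all four terms of $q$ are active. The $t^2$-coefficients are $-\tfrac12+1-1+\tfrac12=0$. The $t$-coefficients are $-\tfrac{\gamma}{2}+\tfrac{3\gamma}{2}-\gamma=0$. The constants are $\tfrac{\gamma^2}{16}-\tfrac{9\gamma^2}{16}+\tfrac{\gamma^2}{2}=0$. So $q(\bullet;\gamma)\equiv 0$ there. The curvature beyond $\gamma$ is $-1+2-2+1=0$, not $\tfrac12$; the last term contributes $+1$. Hence $q$ is not coercive. Your intermediate expression $-\tfrac12\gamma|x|+\cdots$ would even make $q$ unbounded below, which contradicts the coercivity you then assert.

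\textbf{The missing computation.} What the bullet needs is the explicit constant $c$ that you leave open. On $[\gamma/4,3\gamma/4]$ one has $q(x;\gamma)=\tfrac12(|x|-\gamma/2)^2-\gamma^2/16$. On $[0,\gamma/4]$ and $[3\gamma/4,\gamma]$ one has $q\ge-\gamma^2/32$. Therefore $\min q(\bullet;\gamma)=-\gamma^2/16$.

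This constant is exactly what produces $17/256$. On each $\bbH_k$,
\[
f=h(\bullet-x_k,\bullet;\zeta/4)\ge-\tfrac{(\zeta/4)^2}{16}-\tfrac{1}{16}\ge-\tfrac{1}{256}-\tfrac{16}{256},
\]
using $\zeta\le 1$. Your bound $g\ge-\tfrac12(x^2+y^2)$ on $\bbP_k$ is correct, since $0\le p_2\le 1$, and it gives $f\ge-\tfrac{17}{512}$ there. Combined with $f(0,0;\zeta)=0$, this closes the bullet.

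Once this computation is supplied, your covering-based argument is equivalent to the paper's. The paper instead splits each $f_k$ over $\bbB_k$ and its complement, and uses the same bound $q(\bullet;\gamma)\ge-\gamma^2/16$.
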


We remark that the first three properties 
together
imply that $f(\bullet,\bullet;\zeta)\in\bbF_{\!C^1}^2(4.086,17/256;\mathbfcal{A})$.
By combining this membership, the other two properties in Corollary~\ref{maintext-cor:all-desired}, and the fact that
$$
    \operatorname{conv}\mleft(\bigcup_{(x,y)^{\T}\in(x_k,0)^{\T}+\bbB^2/8}\partial_{\dC}^2 f(x,y;\zeta)(\bw)\mright)\subseteq\operatorname{conv}\mleft(\bigcup_{(x,y)^{\T}\in\R\times(-1/4,1/4)}\partial_{\dC}^2 f(x,y;\zeta)(\bw)\mright)
$$
for all $\bw\in\R^2$ and $k=0,\ldots,m$,
the whole proof is complete.

\subsection{Proof of Corollary~\ref{cor:lb-1}}\label{sec:proof-corollary}

\begin{myproof}{Corollary~\ref{cor:lb-1}}
    Let $m\ge 1$, $n\ge m+3$, and $\mathbfcal{A}\in\bbA_{\ddet}^n(m)$ be arbitrary. Consider the resisting oracle $\mathbfcal{O}:\R^n\rightarrow\R\times\R^n\times\R_{\operatorname{sym}}^{n\times n}$ given by $\mathbfcal{O}(\bx):=(0,\bd{0},-\be_1\be_1^{\T})$ for all $\bx\in\R^n$. Let $\bx_0,\ldots,\bx_m$ be the iterates generated by $\mathbfcal{A}$ after $m$ rounds of interaction with $\mathbfcal{O}$, and $f_2:\R^2\rightarrow\R$ be the function constructed in (\ref{main-eq:def-f}) with 
    $(\be_1^{\T}\bx_0,0)^{\T},\ldots,(\be_1^{\T}\bx_m,0)^{\T}$ as the iterates.
    In addition, let $\bv\in\bbS^n\cap\operatorname{span}\{\be_1,\bx_0,\ldots,\bx_m\}^{\perp}$ be arbitrary; this is possible as $n\ge m+3$. As a quick sanity check, 
    for $\bU:=(\be_1,\bv)\in\R^{n\times 2}$, we have 
    $(\be_1^{\T}\bx_k,0)^{\T}=\bU^{\T}\bx_k$ for all $k=0,\ldots,m$.
    We also note that $\bU^{\T}\bU=\bI$. In the sequel, similar to Section~\ref{sec:proof-theorem}, we shall show that the function $f:\R^n\rightarrow\R$ defined by $f(\bx):=f_2(\bU^{\T}\bx)$ is compatible with $\mathbfcal{O}$ along $\bx_0,\ldots,\bx_m$ and possesses all the desired properties stated in Corollary~\ref{cor:lb-1}.

    We begin with the oracle compatibility of $f$ along $\bx_0,\ldots,\bx_m$. 
    Indeed, it follows 
    by combining 
    the local $C^{3}$-smoothness
    and oracle compatibility 
    of $f_2$
    in Corollary~\ref{maintext-cor:all-desired} with the smooth chain rule 
    that
    $$
        \begin{dcases}
            f(\bx_k)=f_2(\bU^{\T}\bx_k)=0,\\
            \nabla f(\bx_k)=\bU\nabla f_2(\bU^{\T}\bx_k)=\bU\bd{0}=\bd{0},\\
            \nabla^2 f(\bx_k)=\bU\nabla^2 f_2(\bU^{\T}\bx_k)\bU^{\T}=\bU(-\be_1\be_1^{\T})\bU^{\T}=-\be_1\be_1^{\T},
        \end{dcases}
        \quad\text{for all $k=0,\ldots,m$},
    $$
    as desired.

    We next verify that $f\in\bbF_{\!C^1}^n(4.086,17/256;\mathbfcal{A})$. To see the first property, it suffices to observe through the $C^{1,1}$-smoothness of $f_2$ in Corollary~\ref{maintext-cor:all-desired} and the smooth chain rule that
    $$
    \begin{aligned}
        \|\nabla f(\bx)-\nabla f(\by)\|&=\|\bU\nabla f_2(\bU^{\T}\bx)-\bU\nabla f_2(\bU^{\T}\by)\|=\|\nabla f_2(\bU^{\T}\bx)-\nabla f_2(\bU^{\T}\by)\|\\
        &\le4.086\cdot\|\bU^{\T}\bx-\bU^{\T}\by\|\le4.086\cdot\|\bx-\by\|\quad\text{for all $\bx,\by\in\R^n$};
    \end{aligned}
    $$
    the other two properties are trivial.

    Finally, we deal with stationarity refutations. To start, we observe 
    through Corollary~\ref{cor:2nd-chain-rule} that for all $\bw\in\R^n$ and $k=0,\ldots,m$,
    $$
        \operatorname{conv}\mleft(\bigcup_{\by\in\bx_k+\bbB^n/8}\partial_{\dC}^2 f(\by)(\bw)\mright)
        =\bU\operatorname{conv}\mleft(\bigcup_{\by\in\bU^{\T}\bx_k+\bbB^2/8}\partial_{\dC}^2 f_2(\by)(\bU^{\T}\bw)\mright).
    $$
    As a result, 
    for $\bw:=\bU\bw_0\in\bbS^{n}$ 
    with
    $\bw_0:=(10,11)^{\T}/\sqrt{221}\in\bbS^{2}$, we have 
    $$
    \begin{aligned}
        \max_{\bz\in\operatorname{conv}\mleft(\bigcup_{\by\in\bx_k+\bbB^n/8}\partial_{\dC}^2 f(\by)(\bw)\mright)}\langle\bz,\bw\rangle&=\max_{\bz\in\bU\operatorname{conv}\mleft(\bigcup_{\by\in\bU^{\T}\bx_k+\bbB^2/8}\partial_{\dC}^2 f_2(\by)(\bU^{\T}\bw)\mright)}\langle\bz,\bw\rangle\\
        &=\max_{\bz\in\operatorname{conv}\mleft(\bigcup_{\by\in\bU^{\T}\bx_k+\bbB^2/8}\partial_{\dC}^2 f_2(\by)(\bU^{\T}\bw)\mright)}\langle\bz,\bU^{\T}\bw\rangle\\
        &=\max_{\bz\in\operatorname{conv}\mleft(\bigcup_{\by\in(\be_1^{\T}\bx_k,0)^{\T}+\bbB^2/8}\partial_{\dC}^2 f_2(\by)(\bw_0)\mright)}\langle\bz,\bw_0\rangle\\
        &\le\max_{\bz\in\operatorname{conv}\mleft(\bigcup_{\by\in\R\times(-1/4,1/4)}\partial_{\dC}^2 f_2(\by)(\bw_0)\mright)}\langle\bz,\bw_0\rangle\\
        &\le -0.0092714\quad\text{for all $k=0,\ldots,m$},
    \end{aligned}
    $$
    where the last inequality 
    is due to
    the band refutation of $f_2$ in Corollary~\ref{maintext-cor:all-desired}. The proof is complete.
\end{myproof}

\section{Concluding remarks}\label{sec:conclusion}
In this paper, we have shown the intractability of 
deterministic second-order $C^{1,1}$-optimization
(and thus weakly convex optimization).
To be specific, we have shown that
for any $m\ge 1$, any $n\ge 2$ (resp., $n\ge m+3$), and any $\mathbfcal{A}\in\bbA_{\ddetzr}^n(m)$ (resp., $\mathbfcal{A}\in\bbA_{\ddet}^n(m)$), there exists some 
$f\in\bbF_{\!C^1}^n(4.086,17/256;\mathbfcal{A})$
such that the 
iterates $\bx_0,\ldots,\bx_m$ generated by $\mathbfcal{A}$ when applied to $f$ are not $(\varepsilon,\eta,\delta)$-Goldstein approximate second-order stationary for all $\varepsilon\ge 0$, $\eta\in [0,0.0092714)$, and $\delta\in[0,1/8]$.
One natural and interesting future direction is to understand whether we can use randomization to bypass the lower bounds; see, e.g.,~\citep{zhang2020complexity,tian2022finite,davis2022gradient,lin2022gradient,metel2022perturbed,kornowski2024algorithm} for first-order counterparts.


\bibliographystyle{plainnat}
\bibliography{references}

\clearpage
\appendix

\section{Full proof of Theorem~\ref{thm:lb-1}}\label{app:proofs-main}

\subsection{Preparation: The local surgery}\label{sec:local-surgery}
Recall from Section~\ref{sec:proof-theorem} that we shall carry out certain local 
surgeries on an ambient function that is locally identical to $-2^{-1}\mleft\|\bullet\mright\|^2$.
In this subsection, we describe the local surgery in full detail.


To begin, consider the one-dimensional $C^3$ bump function $p_1(\bullet;\delta):\R\rightarrow\R$ defined by
$$
    p_1(x;\delta):=
    \begin{dcases}
        \left(\frac{x^2}{\delta ^2}-1\right)^4, & |x|\le\delta,\\
        0, & \textnormal{otherwise},
    \end{dcases}
$$
where $\delta>0$, 
and its induced
two-dimensional bump function $p_2(\bullet,\bullet;\delta):\R^2\rightarrow\R$ defined by\footnote{It is worth noting that the exponent $4$ in (\ref{eq:def-p2}) is necessary and essential, and its purpose is to enforce a rapid decay 
within a small vicinity near $\bd{0}$, ensuring that certain undesired magnitudes 
that arise in later constructions remain controllable when gated by $p_2(\bullet,\bullet;\delta)$; if we replace $4$ with, e.g., $3$ or $2$, then the validity of the remaining analysis would break down.} 
\begin{equation}\label{eq:def-p2}
    p_2(x,y;\delta):=p_1\bigl(\sqrt{x^2+y^2};\delta\bigr)=
    \begin{dcases}
        \left(\frac{x^2+y^2}{\delta ^2}-1\right)^4, & \sqrt{x^2+y^2}\le \delta,\\
        0, & \textnormal{otherwise},
    \end{dcases}
\end{equation}
which 
inherits the $C^3$-smoothness of $p_1(\bullet;\delta)$:
In light of the composition 
$$
    p_2(\bullet,\bullet;\delta)=p_1(\bullet;\delta)\circ\sqrt{\bullet}\circ\bigl((x,y)\mapsto x^2+y^2\bigr)
$$
and the fact that $\sqrt{\bullet}$ is $C^{\infty}$ on $(0,\infty)$, we know that $p_2(\bullet,\bullet;\delta)$ is $C^3$ on
$\R^2\setminus\{\bd{0}\}$. On the other hand, we see by (\ref{eq:def-p2}) that there exists an open neighborhood of $\bd{0}$ (i.e., $\delta\operatorname{int}(\bbB^2)$) on which $p_2(\bullet,\bullet;\delta)$ is a polynomial; hence, $p_2(\bullet,\bullet;\delta)$ is $C^{\infty}$ around $\bd{0}$. By putting the above two pieces together, it follows that $p_2(\bullet,\bullet;\delta)$ is (globally) $C^3$. 

\begin{figure}[!ht]
\resizebox{\linewidth}{!}{%
\centering
\begin{tikzpicture}[
  declare function={
    p1(\x) = (abs(\x) < 1) * (\x^2-1)^4
              + (abs(\x) >= 1) * 0;
  }
]
\begin{axis}[
    axis lines = left,
    grid=major,
    xlabel = $x$,
    ylabel = $p_1(x;1)$,
height=0.33*\linewidth,
width=\linewidth,
]
\addplot [
    domain=-1.2:1.2, 
    samples=500,
    color=black,
]
{p1(x)};
\end{axis}
\end{tikzpicture}
}
\caption{The landscape of $p_1(\bullet;1)$.}
\label{fig:p1-func}
\end{figure}

\begin{figure}[!ht]
    \centering
    \begin{tikzpicture}[
  declare function={
    func(\x,\y) = ( \x^2 + \y^2 <= 1 ) ? (\x^2+\y^2-1)^4 : 0;
  }
]
\begin{axis}[
    xlabel={$x$}, ylabel={$y$}, zlabel={$1-p_2(x,y;1)$}, xmin=-1, xmax=1, ymin=-1, ymax=1, zmin=0, zmax=1, 
    grid=both,
    tick align=outside,
    tickpos=left,
    axis lines*=left,
    view={-37.5}{30},
    point meta min=0,  
    point meta max=1,   
    colormap/jet,
]
    \addplot3 [
        contour gnuplot={contour dir=z,
        output point meta=rawz,
        number=20,
        labels=false},
        z filter/.expression={0},
        domain=-1:1,
        domain y=-1:1,
        thick,
    ] {1-func(x,y)};
    
    \addplot3 [mesh,thick,
    samples=50,
    samples y=50,
    domain=-1:1,
    domain y=-1:1,
    ] {1-func(x,y)};
\end{axis}
\end{tikzpicture}
    \caption{The landscape of $1-p_2(\bullet,\bullet;1)$.}
    \label{fig:bump}
\end{figure}


Recall from Section~\ref{sec:proof-theorem} that the local surgery pertains to interpolating between $-2^{-1}\mleft\|\bullet\mright\|^2$ and $(x,y)\mapsto -2^{-1} x^2$. To this end, consider the smooth blend $g(\bullet,\bullet;\delta):\R^2\rightarrow\R$ defined by
$$
\begin{aligned}
    g(x,y;\delta):={}& p_2(x,y;\delta)\cdot\left(-\frac{1}{2} x^2\right)+\bigl(1-p_2(x,y;\delta)\bigr)\cdot\left(-\frac{1}{2} x^2-\frac{1}{2} y^2\right)\\
    ={}&
    \begin{dcases}
        \frac{1}{2} \left(y^2 \left(\frac{x^2+y^2}{\delta ^2}-1\right)^4-x^2-y^2\right), & \sqrt{x^2+y^2}\le \delta,\\
        \frac{1}{2} (-x^2-y^2), & \textnormal{otherwise}.
    \end{dcases}
\end{aligned}
$$

\begin{figure}[!ht]
    \centering
    \begin{tikzpicture}[
  declare function={
    p2(\x,\y) = ( \x^2 + \y^2 <= 1 ) ? (\x^2+\y^2-1)^4 : 0;
    func(\x,\y) = p2(\x,\y)*(-0.5*\x^2)+(1-p2(\x,\y))*(-0.5*\x^2-0.5*\y^2);
  }
]
\begin{axis}[
    xlabel={$x$}, ylabel={$y$}, zlabel={$g(x,y;1)$}, xmin=-1, xmax=1, ymin=-1, ymax=1, zmin=-1, zmax=0, 
    grid=both,
    tick align=outside,
    tickpos=left,
    axis lines*=left,
    view={-37.5}{30},
    point meta min=-1,  
    point meta max=0,   
    colormap/jet,
]
    \addplot3 [
        contour gnuplot={contour dir=z,
        output point meta=rawz,
        number=50,
        labels=false},
        z filter/.expression={-1},
        domain=-1:1,
        domain y=-1:1,
        thick,
    ] {func(x,y)};
    
    \addplot3 [mesh,thick,
    samples=50,
    samples y=50,
    domain=-1:1,
    domain y=-1:1,
    ] {func(x,y)};
    
\end{axis}
\end{tikzpicture}
    \caption{The landscape of $g(\bullet,\bullet;1)$.}
    \label{fig:gadget}
\end{figure}

By applying Leibniz's rule, 
it follows that $g(\bullet,\bullet;\delta)$ is $C^3$ as well, with
$$
    \nabla g(x,y;\delta)=
    \begin{dcases}
        \begin{pmatrix}
        x \left(\frac{4 y^2 (-\delta ^2+x^2+y^2)^3}{\delta ^8}-1\right) \\
        y \left(\left(\frac{x^2+y^2}{\delta ^2}-1\right)^4+\frac{4 y^2 (-\delta ^2+x^2+y^2)^3}{\delta ^8}-1\right)
    \end{pmatrix}
    , & \sqrt{x^2+y^2}\le \delta,\\
        \begin{pmatrix}
        -x \\
        -y
    \end{pmatrix}, & \textnormal{otherwise},
    \end{dcases}
$$
and
\begin{equation*}\label{eq:Hess-g}
\begin{aligned}
    \nabla^2 g(x,y;\delta)=
    \begin{dcases}
        \bH(x,y;\delta)=
        \begin{pmatrix}
            h_{1 1}(x,y;\delta) & h_{1 2}(x,y;\delta) \\
            h_{2 1}(x,y;\delta) & h_{2 2}(x,y;\delta)
        \end{pmatrix}
        , & \sqrt{x^2+y^2}\le \delta,\\
        -\bI, & \textnormal{otherwise},
    \end{dcases}
\end{aligned}
\end{equation*}
where
$$
    \begin{dcases}
        h_{1 1}(x,y;\delta):=\frac{-\delta ^8+4 y^2 (-\delta ^2+x^2+y^2)^2 (-\delta ^2+7 x^2+y^2)}{\delta ^8},\\
        h_{1 2}(x,y;\delta):=\frac{8 x y (-\delta ^2+x^2+y^2)^2 (-\delta ^2+x^2+4 y^2)}{\delta ^8},\\
        h_{2 1}(x,y;\delta):=h_{1 2}(x,y;\delta),\\
        h_{2 2}(x,y;\delta):=\frac{\left(
        \begin{gathered}
        -\delta ^8+(-\delta ^2+x^2+y^2)^4+16 y^2 (-\delta ^2+x^2+y^2)^3\\
        +4 y^2 (-\delta ^2+x^2+y^2)^2 (-\delta ^2+x^2+7 y^2)
        \end{gathered}
        \right)}{\delta ^8},
    \end{dcases}
$$
which immediately imply the desired oracle compatibility mentioned in Section~\ref{sec:proof-theorem}.

\begin{lemma}\label{lma:oracle}
    Let $\delta>0$ be a constant. We have
    $$
        g(0,0;\delta)=0,\quad\nabla g(0,0;\delta)=\bd{0},\quad\text{and}\quad\nabla^2 g(0,0;\delta)=
        \begin{pmatrix}
            -1 & 0 \\
            0 & 0
        \end{pmatrix}.
    $$
\end{lemma}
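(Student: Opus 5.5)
The plan is to evaluate the closed-form expressions for $g$, $\nabla g$, and $\nabla^2 g$ displayed just above at the point $(x,y)=(0,0)$. Since $\sqrt{0^2+0^2}=0\le\delta$, the first branch of each piecewise formula applies in every case.

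For the function value, I substitute into the first branch of $g$:
$$
g(0,0;\delta)=\tfrac12\bigl(0\cdot(0-1)^4-0-0\bigr)=0.
$$
For the gradient, both components of the first branch of $\nabla g$ carry an outer factor $x$ or $y$ respectively, so both vanish at the origin and $\nabla g(0,0;\delta)=\bd{0}$.

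For the Hessian, I evaluate each entry separately:
\begin{itemize}
\item In $h_{11}$, the second term in the numerator contains the factor $y^2$, so $h_{11}(0,0;\delta)=-\delta^8/\delta^8=-1$.
\item In $h_{12}=h_{21}$, the numerator contains the factor $xy$, so the entry is $0$.
\item In $h_{22}$, the two terms carrying $y^2$ vanish. The numerator reduces to $-\delta^8+(-\delta^2)^4=0$, so $h_{22}(0,0;\delta)=0$.
\end{itemize}
Together these give $\nabla^2 g(0,0;\delta)=-\be_1\be_1^{\T}$, which is the stated matrix.

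I do not expect a genuine obstacle. The only substantive point is that the Hessian formula is valid at the origin, which holds because $g(\bullet,\bullet;\delta)$ is $C^3$ and polynomial on $\delta\operatorname{int}(\bbB^2)$.

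As an independent sanity check on the derivative formulas, I would expand $g$ near the origin directly. Write $(r^2/\delta^2-1)^4=1+O(r^2)$. Then
$$
g=\tfrac12\bigl(y^2(1+O(r^2))-x^2-y^2\bigr)=-\tfrac12 x^2+O(r^4).
$$
Reading off the Taylor coefficients up to order two gives value $0$, gradient $\bd{0}$, and Hessian $\operatorname{diag}(-1,0)$. This confirms the lemma without relying on the displayed derivative formulas.
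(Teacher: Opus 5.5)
Your proposal is correct and matches the paper, which derives the lemma by evaluating the displayed closed-form expressions for $g$, $\nabla g$, and the Hessian entries $h_{ij}$ at the origin. Your entry-by-entry evaluation, including $h_{22}(0,0;\delta)=(-\delta^8+\delta^8)/\delta^8=0$, is exactly that computation, and your Taylor check $g=-\tfrac12 x^2+O(r^4)$ is a useful independent confirmation that does not rely on the derivative formulas.
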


With Lemma~\ref{lma:oracle} in place, as discussed in Section~\ref{sec:proof-theorem}, it then remains to verify the (stationarity) refutation property of $g(\bullet,\bullet;\delta)$ and to estimate the Lipschitz modulus of $\nabla g(\bullet,\bullet;\delta)$. To this end, a few interesting technical preparations are in order.

\subsubsection{Technical bounds}
In this part, we shall bound the elements of $\bH(x,y;\delta)$ on $\delta\bbB^2$. In view of the fact that 
$$
    h_{i j}(x,y;\delta)=h_{i j}(\delta^{-1} x,\delta^{-1} y;1)\quad\text{for all $i,j=1,2$}
$$
and the following trivial lemma (with $\bA:=\delta^{-1}\bI$), it suffices to assume that $\delta=1$ throughout the proofs that follow.

\begin{lemma}
    Let $f:\R^n\rightarrow\R$ and $\bG:\R^n\rightarrow\R^m$ be arbitrary, $\bA\in\R^{n\times n}$ be invertible. We have
    $$
        \bigl\{f(\bx):\bG(\bx)\le \bd{0}\bigr\}=\bigl\{f(\bA\bx):\bG(\bA\bx)\le \bd{0}\bigr\}.
    $$
\end{lemma}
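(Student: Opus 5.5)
The statement is a set equality, so I will prove the two inclusions, using only that $\bA$ is a bijection of $\R^n$ onto itself. Let $S_1:=\{f(\bx):\bG(\bx)\le\bd{0}\}$ and $S_2:=\{f(\bA\bx):\bG(\bA\bx)\le\bd{0}\}$, where ``$\le$'' is read componentwise.

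\emph{Inclusion $S_2\subseteq S_1$.} Take $t\in S_2$. Then $t=f(\bA\bx)$ for some $\bx\in\R^n$ with $\bG(\bA\bx)\le\bd{0}$. Put $\by:=\bA\bx$. This $\by$ satisfies $\bG(\by)\le\bd{0}$ and $t=f(\by)$, so $t\in S_1$.

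\emph{Inclusion $S_1\subseteq S_2$.} Take $t\in S_1$. Then $t=f(\by)$ for some $\by$ with $\bG(\by)\le\bd{0}$. Since $\bA$ is invertible, set $\bx:=\bA^{-1}\by$, so that $\bA\bx=\by$. Hence $\bG(\bA\bx)\le\bd{0}$ and $f(\bA\bx)=t$, so $t\in S_2$.

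There is no real obstacle here. The only point to note is that invertibility is used exactly once, to guarantee that $\bx\mapsto\bA\bx$ is surjective, so that every feasible $\by$ is attained as some $\bA\bx$; injectivity is not needed.

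In the intended application, take $\bA:=\delta\bI$ (the paper writes $\delta^{-1}\bI$, which works equally well up to which direction the substitution is read). Take $f$ to be an entry $h_{ij}(\bullet,\bullet;\delta)$ and $\bG(x,y):=x^2+y^2-\delta^2$. The scaling identity $h_{ij}(x,y;\delta)=h_{ij}(\delta^{-1}x,\delta^{-1}y;1)$ then shows that the set of values of $h_{ij}(\bullet,\bullet;\delta)$ on $\delta\bbB^2$ equals the set of values of $h_{ij}(\bullet,\bullet;1)$ on $\bbB^2$. This justifies the reduction to $\delta=1$.
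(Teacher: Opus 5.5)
Your proof is correct and follows the same route as the paper, which shows the nontrivial inclusion by substituting $\bx_0^{\prime}:=\bA^{-1}\bx_0$ and calls the reverse inclusion trivial. Your remark on the application is also right: taking $\bA=\delta\bI$ with $f=h_{ij}(\bullet,\bullet;\delta)$, or the paper's $\bA=\delta^{-1}\bI$ with $f=h_{ij}(\bullet,\bullet;1)$, both give the reduction to $\delta=1$.
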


To put it simple, linear isomorphisms preserve (constrained) function ranges.

\begin{proof}
    Suppose that $v\in\bigl\{f(\bx):\bG(\bx)\le \bd{0}\bigr\}$. Then, we have $v=f(\bx_0)$ for some $\bx_0\in\R^n$ with $\bG(\bx_0)\le \bd{0}$. However, we also have $f(\bx_0)=f(\bA\bx_0^{\prime})$ and $\bG(\bx_0)=\bG(\bA\bx_0^{\prime})$ for $\bx_0^{\prime}:=\bA^{-1}\bx_0$. Hence, it follows that $v=f(\bA\bx_0^{\prime})$ for some $\bx_0^{\prime}\in\R^n$ such that $\bG(\bA\bx_0^{\prime})\le \bd{0}$; i.e., $v\in\bigl\{f(\bA\bx):\bG(\bA\bx)\le \bd{0}\bigr\}$, as desired. The other direction is trivial.
\end{proof}

As an aside, we note for all $x,y\in\R$ with $\sqrt{x^2+y^2}=\delta$ that
$$
    h_{1 1}(x,y;\delta)=-1,\quad h_{1 2}(x,y;\delta)=h_{2 1}(x,y;\delta)=0,\quad\text{and}\quad h_{2 2}(x,y;\delta)=-1;
$$
hence, we shall also assume that $\sqrt{x^2+y^2}\neq\delta$ in the sequel.

\begin{proposition}\label{prop:tech-bound1}
    Let $\delta>0$ be a constant. We have 
    $$
        -\frac{91}{64}\le h_{1 1}(x,y;\delta)\le-\frac{71}{98}\quad\text{for all $x,y\in\R$ such that $\sqrt{x^2+y^2}\le \delta$}.
    $$
\end{proposition}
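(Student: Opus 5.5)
We reduce to $\delta=1$ by the scaling identity $h_{11}(x,y;\delta)=h_{11}(\delta^{-1}x,\delta^{-1}y;1)$ and the range-preservation lemma. The idea is then to reparametrize the unit disk so that $h_{11}$ becomes a low-degree polynomial in two variables over a triangle, and to optimize it explicitly.

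\textbf{Reparametrization.} With $\delta=1$, write $s:=x^2+y^2\in[0,1]$ and $t:=y^2\in[0,s]$, so that $x^2=s-t$. Then $7x^2+y^2-1=7s-6t-1$ and
$$
h_{11}=-1+4(1-s)^2\,\varphi_s(t),\qquad \varphi_s(t):=t\,(7s-1-6t).
$$
The map $(x,y)\mapsto(s,t)$ sends the closed unit disk onto $\{(s,t):0\le t\le s\le 1\}$. Hence it suffices to bound $\varphi_s$ over $t\in[0,s]$ for each fixed $s$, and then optimize over $s$.

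\textbf{Inner optimization in $t$.} For fixed $s$, $\varphi_s$ is a concave quadratic in $t$, so its minimum on $[0,s]$ is attained at an endpoint. We have $\varphi_s(0)=0$ and $\varphi_s(s)=s(s-1)\le 0$, so the minimum is $-s(1-s)$. The vertex of $\varphi_s$ is at $t^\ast=(7s-1)/12$, which always satisfies $t^\ast\le s$.
\begin{itemize}
\item If $s\ge 1/7$, then $t^\ast\in[0,s]$ and the maximum is $\varphi_s(t^\ast)=(7s-1)^2/24$.
\item If $s<1/7$, then $\varphi_s\le 0$ on $[0,s]$, and this case is dominated by the upper bound obtained below.
\end{itemize}

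\textbf{Outer optimization in $s$.} For the lower bound,
$$
h_{11}\ge -1-4s(1-s)^3 .
$$
One-variable calculus shows that $s(1-s)^3$ is maximized at $s=1/4$, with value $27/256$. This gives $h_{11}\ge -1-27/64=-91/64$.

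For the upper bound,
$$
h_{11}\le -1+\tfrac16\bigl((1-s)(7s-1)\bigr)^2 \quad\text{on } s\in[1/7,1].
$$
The concave quadratic $(1-s)(7s-1)$ is nonnegative on this interval and peaks at $s=4/7$ with value $9/7$. This gives $h_{11}\le -1+\tfrac{81}{294}=-71/98$. For $s<1/7$ we already have $h_{11}\le -1$, which is below $-71/98$.

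\textbf{Main obstacle.} The only real work is spotting the substitution $(s,t)=(x^2+y^2,y^2)$, which makes $h_{11}$ affine-quadratic in $t$. After that, everything is elementary one-variable optimization. The main care needed is in the case split at $s=1/7$ and in checking that the vertex $t^\ast$ does not exceed $s$.
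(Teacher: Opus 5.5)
Your proof is correct, and it takes a genuinely different route from the paper.

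The paper stays in the original coordinates. It computes both partial derivatives of $h_{11}(\cdot,\cdot;1)$ and divides out the factors $y$ and $x^2+y^2-1$, on whose zero sets $h_{11}=-1$. It then solves the remaining polynomial system by treating one equation as a quadratic in $y^2$, which brings in radicals and a two-branch case analysis. Finally, it compares the values at the candidate critical points $(0,\pm\frac12)$ and $(\pm\frac{3}{2\sqrt7},\pm\frac12)$ with the value $-1$ on the excluded sets, implicitly relying on compactness of the disk.

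Your substitution $(s,t)=(x^2+y^2,y^2)$ uses the fact that $h_{11}$ depends only on $x^2$ and $y^2$ and is a concave quadratic in $t$ for fixed $s$. So the inner problem has a closed-form solution, and the outer problem reduces to maximizing $s(1-s)^3$ and $(1-s)(7s-1)$ over intervals. Each step checks out: $\varphi_s(t^\ast)=(7s-1)^2/24$, $t^\ast\le s$, the values $27/256$ and $9/7$, and $81/294=27/98$. Your extremizers $(s,t)=(\frac14,\frac14)$ and $(\frac47,\frac14)$ are exactly the paper's critical points, which confirms both bounds are tight.

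Your route is more elementary and self-contained. It needs no radicals and no coupled system. It also needs no separate argument that the global extrema occur at interior critical points or on the discarded sets $\{y=0\}$ and the unit circle.

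The paper's route has the advantage of uniformity, since the same critical-point template is reused for $h_{12}$ and $h_{22}$. Your trick would also handle $h_{22}$, which is again quadratic in $t$ for fixed $s$. It is less clean for $h_{12}$, whose factor $xy=\pm\sqrt{t(s-t)}$ breaks the polynomial structure in $t$.
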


\begin{figure}[!ht]
    \centering
    \begin{tikzpicture}[
  declare function={
    func(\x,\y) = ( \x^2 + \y^2 <= 1 ) ? 4* \y^2 *(\x^2+\y^2-1)^2 *(7* \x^2+\y^2-1)-1 : -1;
  }
]
\begin{axis}[
    xlabel={$x$}, ylabel={$y$}, zlabel={$h_{1 1}(x,y;1)$}, xmin=-1, xmax=1, ymin=-1, ymax=1, zmin=-1.43, zmax=-0.72, 
    grid=both,
    tick align=outside,
    tickpos=left,
    axis lines*=left,
    view={-37.5}{30},
    point meta min=-1.43,  
    point meta max=-0.72,   
    colormap/jet,
]
    \addplot3 [
        contour gnuplot={contour dir=z,
        output point meta=rawz,
        number=20,
        labels=false},
        z filter/.expression={-1.43},
        domain=-1:1,
        domain y=-1:1,
        thick,
    ] {func(x,y)};
    
    \addplot3 [mesh,thick,
    samples=50,
    samples y=50,
    domain=-1:1,
    domain y=-1:1,
    ] {func(x,y)};
\end{axis}
\end{tikzpicture}
    \caption{The landscape of $h_{1 1}(\bullet,\bullet;1)$; for visual clarity, we overwrite $h_{1 1}(x,y;1)=-1$ on $(\bbB^2)^{\complement}$.}
    \label{fig:h11}
\end{figure}

\begin{proof}
    To begin, we calculate
    $$
        h_{1 1}(x,y;1)=4 y^2 (x^2+y^2-1)^2 (7 x^2+y^2-1)-1.
    $$
    In what follows, we shall derive a set of candidate stationary points of $h_{1 1}(\bullet,\bullet;1)$ and then evaluate and compare at these candidates; as $h_{1 1}(x,0;1)=-1$ for all $x\in\R$, we shall also assume that $y\neq 0$.
    
    It can be calculated that
    $$
        \frac{\partial}{\partial x} h_{1 1}(x,y;1)=56 x y^2 (x^2+y^2-1)^2+16 x y^2 (x^2+y^2-1) (7 x^2+y^2-1)
    $$
    and
    $$
        \frac{\partial}{\partial y} h_{1 1}(x,y;1)=8 y (x^2+y^2-1)^2 (7 x^2+y^2-1)+8 y^3 (x^2+y^2-1)^2+16 y^3 (x^2+y^2-1) (7 x^2+y^2-1).
    $$
    By setting these partial derivatives to zero and simplifying, we obtain
    \begin{equation*}
        \begin{dcases}
            24 x y^2 (x^2+y^2-1) (7 x^2+3 y^2-3)=0,\\
            8 y (x^2+y^2-1) \bigl(7 x^4+x^2 (23 y^2-8)+4 y^4-5 y^2+1\bigr)=0.
        \end{dcases}
    \end{equation*}
    By using the assumptions that $\sqrt{x^2+y^2}\neq 1$ and $y\neq 0$, we can further simplify the system to
    \begin{equation}\label{eq:system-1}
        \begin{dcases}
            x (7 x^2+3 y^2-3)=0,\\
            7 x^4+x^2 (23 y^2-8)+4 y^4-5 y^2+1=0.
        \end{dcases}
    \end{equation}
    As the second equation is quadratic in $y^2$, we can solve for $y^2$ and this yields
    \begin{equation}\label{eq:sol-y^2}
        y^2=\frac{1}{8} \left(-23 x^2\pm\sqrt{3} \sqrt{139 x^4-34 x^2+3}+5\right).
    \end{equation}
    We divide the subsequent discussions into two cases.
    \begin{itemize}
        \item Suppose that $y^2=8^{-1} \bigl(-23 x^2-\sqrt{3} \sqrt{139 x^4-34 x^2+3}+5\bigr)$. Then, by substituting this into the first equation in (\ref{eq:system-1}) and simplifying, it follows that either $x=0$ or 
    $$
        9\le 13 x^2+3 \sqrt{417 x^4-102 x^2+9}+9=0,
    $$
    which is impossible. Hence, we only have one pair of solutions in this case; i.e., $(0,\pm 2^{-1})$. 
        \item Suppose that $y^2=8^{-1} \bigl(-23 x^2+\sqrt{3} \sqrt{139 x^4-34 x^2+3}+5\bigr)$. Then, we similarly know that either $x=0$ or 
    $$
        3 \sqrt{417 x^4-102 x^2+9}=13 x^2+9 
        \implies x=0,\pm \frac{3}{2 \sqrt{7}}.
    $$
    Hence, in this case, we have the following four solutions
    $$
        \left(\pm \frac{3}{2 \sqrt{7}},\pm\frac{1}{2}\right);
    $$
    the other pair of solutions $(0,\pm 1)$ is discarded as the assumption $\sqrt{x^2+y^2}\neq 1$ is violated.
    \end{itemize}    
    
    By comparing $h_{1 1}(x,y;1)$ at all calculated points and boundary cases 
    (that are 
    explicitly excluded by
    the assumptions),
    the desired claim directly follows. The proof is complete.
\end{proof}

\begin{proposition}\label{prop:tech-bound2}
    Let $\delta>0$ be a constant. 
    For
    all $x,y\in\R$ with $\sqrt{x^2+y^2}\le \delta$, we have 
    $$
        \bigl|h_{1 2}(x,y;\delta)\bigr|\le
        \frac{1}{2} \sqrt{\frac{1}{2} \bigl(\sqrt{177}+17\bigr)} \left(\frac{1}{64} \bigl(\sqrt{177}+17\bigr)-\frac{7}{8}\right)^2 \left(\frac{1}{16} \bigl(\sqrt{177}+17\bigr)-\frac{7}{8}\right)\le 0.32.
    $$
\end{proposition}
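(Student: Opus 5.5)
By the scaling identity $h_{1 2}(x,y;\delta)=h_{1 2}(\delta^{-1}x,\delta^{-1}y;1)$ and the range-preservation lemma, I will take $\delta=1$. Then
$$
    h_{1 2}(x,y;1)=8xy\,(x^2+y^2-1)^2\,(x^2+4y^2-1).
$$
This expression is odd in $x$ and odd in $y$, so $|h_{1 2}|$ is invariant under $(x,y)\mapsto(\pm x,\pm y)$. It therefore suffices to bound $|h_{1 2}|$ on the closed quarter disk $\{x,y\ge 0,\ x^2+y^2\le 1\}$.

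\textbf{Boundary.} On this compact set $|h_{1 2}|$ attains its maximum. On the boundary pieces $x=0$, $y=0$ and $x^2+y^2=1$ the function vanishes, so any nonzero maximum is attained in the interior at a critical point of $h_{1 2}(\bullet,\bullet;1)$.

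\textbf{Interior critical points.} I will compute $\partial_x h_{1 2}$ and $\partial_y h_{1 2}$ and cancel the factors that are nonzero in the interior, as in the proof of Proposition~\ref{prop:tech-bound1}. These factors are $x$, $y$ and $(x^2+y^2-1)$. Points with $x^2+4y^2=1$ can also be discarded, since $h_{1 2}$ vanishes there. Writing $u=x^2$ and $v=y^2$, the remaining system becomes two polynomial equations in $(u,v)$:
\begin{itemize}
    \item from $\partial_x$: $(1-u-v)(u+4v-1)-4u(u+4v-1)+2u(1-u-v)=0$, up to constant multiples;
    \item from $\partial_y$: the analogous equation, with the last factor $2u$ replaced by $8v$ and the coefficient $-4u$ replaced by $-4v$.
\end{itemize}

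\textbf{Solving the system.} Subtracting suitable multiples of the two equations should give a linear relation between $u$ and $v$. Substituting it back should leave a quadratic in a single variable, whose discriminant I expect to produce $\sqrt{177}$. The displayed bound suggests the following values at the critical point:
\begin{itemize}
    \item the variable $t:=(\sqrt{177}+17)/\text{const}$ appears, and $\sqrt{t/2}$ plays the role of $xy$ (or of $x$, up to scaling);
    \item $1-x^2-y^2=\frac{1}{64}(\sqrt{177}+17)-\frac78$ in absolute value;
    \item $|x^2+4y^2-1|=\frac{1}{16}(\sqrt{177}+17)-\frac78$.
\end{itemize}
I will keep only the roots with $u,v>0$ and $u+v<1$. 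At each admissible root, I will evaluate $|h_{1 2}|$ in closed form. I will then compare the two sign regions: where $x^2+4y^2-1<0$ and where it is $>0$. The larger value should match the stated radical expression.

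\textbf{Numerical check.} Finally, I will evaluate the radical expression numerically ($\sqrt{177}\approx 13.304$) to confirm that it is at most $0.32$.

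The main obstacle is the elimination step. It needs careful algebra to reduce the system to a single quadratic, identify which of the resulting roots lie in the open quarter disk, and verify that the surviving value is the global maximum rather than the value at another critical point. I would first try eliminating via the ratio $v/u$, since both equations are homogeneous enough for this to make the relation linear.
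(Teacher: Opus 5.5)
Your plan is the same as the paper's proof. You take $\delta=1$, discard the zero set $xy(x^2+y^2-1)=0$, reduce $\nabla h_{1 2}=0$ to a polynomial system in $(u,v)=(x^2,y^2)$, and compare the values at the surviving critical points. Your two equations are, up to sign, exactly the paper's
$E_1:=7u^2+23uv+4v^2-8u-5v+1=0$ and $E_2:=(u-1)^2+17(u-1)v+28v^2=0$.
Your symmetry-plus-boundary argument (the maximum of $|h_{1 2}|$ on the compact quarter disk is attained, and $h_{1 2}$ vanishes on its boundary) makes explicit what the paper leaves implicit.

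The step you leave open, the elimination, will not go literally as you describe. The quadratic parts $7u^2+23uv+4v^2$ and $u^2+17uv+28v^2$ are not proportional, so no combination of $E_1$ and $E_2$ is linear. The system is also not homogeneous in $(u,v)$; it is $E_2$ that is homogeneous, in $(u-1,v)$. So passing to $v/u$ does not linearize anything.

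What does work is a degenerate member of the pencil: $7E_1-E_2=6(8u-1)(u+3v-1)$.
\begin{itemize}
\item On $u+3v=1$, $E_2$ becomes $-14v^2=0$. This gives only the boundary point $(u,v)=(1,0)$.
\item On $u=1/8$, $E_2$ becomes $256v^2-136v+7=0$. So $v_\pm=(17\pm\sqrt{177})/64$, and both points lie in the open quarter disk.
\end{itemize}
At these points $|h_{1 2}|\approx 0.3197$ at $v_+$ (where $x^2+4y^2-1>0$) and $|h_{1 2}|\approx 0.2924$ at $v_-$. The former is exactly the displayed radical expression, which is at most $0.32$.

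The paper reaches the same two points differently: it solves $E_1$ for $v$ by the quadratic formula and substitutes into $E_2$.
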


\begin{figure}[!ht]
    \centering
    \begin{tikzpicture}[
  declare function={
    func(\x,\y) = ( \x^2 + \y^2 <= 1 ) ? 8* \x* \y* (\x^2+\y^2-1)^2 *(\x^2+4* \y^2-1) : 0;
  }
]
\begin{axis}[
    xlabel={$x$}, ylabel={$y$}, zlabel={$h_{1 2}(x,y;1)$}, xmin=-1, xmax=1, ymin=-1, ymax=1, zmin=-0.32, zmax=0.32, 
    grid=both,
    tick align=outside,
    tickpos=left,
    axis lines*=left,
    view={-37.5}{30},
    point meta min=-0.32,  
    point meta max=0.32,   
    colormap/jet,
]
    \addplot3 [
        contour gnuplot={contour dir=z,
        output point meta=rawz,
        number=20,
        labels=false},
        z filter/.expression={-0.32},
        domain=-1:1,
        domain y=-1:1,
        thick,
    ] {func(x,y)};
    
    \addplot3 [mesh,thick,
    samples=50,
    samples y=50,
    domain=-1:1,
    domain y=-1:1,
    ] {func(x,y)};
\end{axis}
\end{tikzpicture}
    \caption{The landscape of $h_{1 2}(\bullet,\bullet;1)$; for visual clarity, we overwrite $h_{1 2}(x,y;1)=0$ on $(\bbB^2)^{\complement}$.}
    \label{fig:h12}
\end{figure}

\begin{proof}
    Similar to the proof of Proposition~\ref{prop:tech-bound1}, we first calculate
    $$
        h_{1 2}(x,y;1)=8 x y (x^2+y^2-1)^2 (x^2+4 y^2-1),
    $$
    together with
    $$
        \frac{\partial}{\partial x}h_{1 2}(x,y;1)=16 x^2 y (x^2+y^2-1)^2+32 x^2 y (x^2+y^2-1) (x^2+4 y^2-1)+8 y (x^2+y^2-1)^2 (x^2+4 y^2-1)
    $$
    and 
    $$
        \frac{\partial}{\partial y}h_{1 2}(x,y;1)=64 x y^2 (x^2+y^2-1)^2+32 x y^2 (x^2+y^2-1) (x^2+4 y^2-1)+8 x (x^2+y^2-1)^2 (x^2+4 y^2-1);
    $$
    as $h_{1 2}(x,y;1)=0$ as soon as $x=0$ or $y=0$, we further suppose that $x\neq 0$ and $y\neq 0$ in the sequel. By setting these partial derivatives to zero and simplifying, we obtain
    $$
        \begin{dcases}
            8 y (x^2+y^2-1) \bigl(7 x^4+x^2 (23 y^2-8)+4 y^4-5 y^2+1\bigr)=0,\\
            8 x (x^2+y^2-1) \bigl(17 (x^2-1) y^2+(x^2-1)^2+28 y^4\bigr)=0,
        \end{dcases}
    $$
    which, upon incorporating all the assumptions mentioned earlier, further simplifies to
    $$
        \begin{dcases}
            7 x^4+x^2 (23 y^2-8)+4 y^4-5 y^2+1=0,\\
            17 (x^2-1) y^2+(x^2-1)^2+28 y^4=0.
        \end{dcases}
    $$
    As the first equation is the same to the second one in (\ref{eq:system-1}), we know that (\ref{eq:sol-y^2}) remains valid here. Similarly, we divide the subsequent discussions into two cases.
    \begin{itemize}
        \item Suppose that $y^2=8^{-1} \bigl(-23 x^2-\sqrt{3} \sqrt{139 x^4-34 x^2+3}+5\bigr)$. Then, we know from the second equation that
    $$
        (8 x^2-1) \left(61 x^2+3 \sqrt{417 x^4-102 x^2+9}-7\right)=0;
    $$
    i.e., either $x^2=1/8$ or
    $$
        9 \le 61 x^2+3 \sqrt{417 x^4-102 x^2+9}=7,
    $$
    which is impossible. Hence, the solutions in this case are merely
    $$
        \left(\pm\frac{1}{2\sqrt{2}},\pm\frac{1}{8} \sqrt{17-\sqrt{177}}\right).
    $$
        \item Suppose that $y^2=8^{-1} \bigl(-23 x^2+\sqrt{3} \sqrt{139 x^4-34 x^2+3}+5\bigr)$. Then, we similarly have either $x^2=1/8$ or
    $$
        -61 x^2+3 \sqrt{417 x^4-102 x^2+9}+7=0 
        \implies x=\pm 1,
    $$
    which should be discarded as the assumption $\sqrt{x^2+y^2}\neq 1$ is violated. Hence, only the below solution pairs arise in this case
    $$
        \left(\pm\frac{1}{2\sqrt{2}},\pm\frac{1}{8}\sqrt{17+\sqrt{177}}\right).
    $$
    \end{itemize}
    The desired claim then directly follows by putting the above pieces together. We are done.
\end{proof}

\begin{proposition}\label{prop:tech-bound3}
    Let $\delta>0$ be a constant. We have 
    $$
        -\frac{253}{125}\le h_{2 2}(x,y;\delta)\le0\quad\text{for all $x,y\in\R$ such that $\sqrt{x^2+y^2}\le \delta$}.
    $$
\end{proposition}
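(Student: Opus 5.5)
By the scaling identity $h_{22}(x,y;\delta)=h_{22}(\delta^{-1}x,\delta^{-1}y;1)$ and the range-preservation lemma, it suffices to take $\delta=1$. On the circle $x^2+y^2=1$ we already know $h_{22}=-1$. So I will work on the open disk and pass to polar-type variables. Write $r:=x^2+y^2\in[0,1)$ and $s:=y^2\in[0,r]$, and set $u:=1-r\in(0,1]$. Then
$$
h_{22}=-1+u^4-16su^3+4su^2(7s-u)=-1+u^4-20su^3+28s^2u^2 .
$$
This is a polynomial in two variables over the region $\{(u,s):0<u\le 1,\ 0\le s\le 1-u\}$, which reduces the claim to an elementary two-variable optimization.

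For the upper bound, I will show $28s^2u^2-20su^3+u^4\le 1$. For fixed $u$, the expression is a convex quadratic in $s$, so its maximum over $s\in[0,1-u]$ is attained at an endpoint.
\begin{itemize}
\item At $s=0$ it equals $u^4\le 1$, giving $h_{22}\le 0$, with equality at the origin.
\item At $s=1-u$ it equals $28(1-u)^2u^2-20(1-u)u^3+u^4$, a single-variable quartic on $[0,1]$. I will check that this is at most $1$ by showing $1$ minus it is nonnegative, for instance by factoring out $(1-u)$ and bounding the remaining cubic.
\end{itemize}

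For the lower bound, the minimum over $s$ for fixed $u$ is at the vertex $s^*=20u^3/(56u^2)=5u/14$, provided $5u/14\le 1-u$, i.e.\ $u\le 14/19$. Otherwise it lies at the endpoint $s=1-u$.
\begin{itemize}
\item In the vertex case the value is $-1+u^4-\frac{100}{56}u^4=-1-\frac{11}{14}u^4$, minimized at $u=14/19$.
\item In the endpoint case I will minimize the quartic $\phi(u):=u^4-20(1-u)u^3+28(1-u)^2u^2$ over $u\in[14/19,1]$ by setting its derivative to zero and comparing critical values.
\end{itemize}
The overall minimum of $h_{22}$ should be $-1-\frac{11}{14}(14/19)^4$ or close to it. I expect this to match $-253/125=-2.024$ only after careful arithmetic, and I will verify that the stated bound holds with some slack.

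The main obstacle is the lower bound. I must get the endpoint/vertex case analysis right and then carry out the one-variable quartic minimization exactly enough to certify $\ge -253/125$. If the closed form is messy, I will instead bound $\phi$ below on $[14/19,1]$ by a crude estimate, e.g.\ dropping the nonnegative term $28(1-u)^2u^2$ and using $20(1-u)u^3\le 20(1-u)$.
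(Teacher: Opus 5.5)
\paragraph{Verdict.} Your strategy is sound, but the reduced polynomial it rests on is miscomputed, so the lower-bound half as written analyses the wrong function. Your reduction to $u=1-x^2-y^2$, $s=y^2$, followed by convexity in $s$, is a genuinely different and tidier route than the paper's. The paper instead enumerates the Cartesian critical points $(0,0)$, $(0,\pm\sqrt{2/3})$, $(0,\pm\sqrt{1/5})$ of $h_{22}(\cdot,\cdot;1)$ and compares them with the boundary value $-1$.

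\paragraph{The algebra error.} Since $x^2=1-u-s$, one has $x^2+7y^2-1=6s-u$, not $7s-u$. Hence
\[
h_{22}=-1+u^4-20su^3+24s^2u^2 .
\]
You can cross-check this against $\partial_{yy}\bigl(\tfrac12 y^2P^4\bigr)=P^4+20y^2P^3+24y^4P^2$ with $P=x^2+y^2-1=-u$. With your coefficient $28$ you would be bounding a different polynomial, whose minimum is about $-1.93$; that is where the ``slack'' you anticipate comes from. Your vertex value is also off even for your own polynomial: the amount subtracted at the vertex is $(20u^3)^2/(4\cdot 28u^2)=\frac{25}{7}u^4$, not $\frac{100}{56}u^4$.

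\paragraph{The corrected lower bound.} With the correct polynomial, the vertex is $s^*=5u/12$, which is feasible exactly when $u\le 12/17$. Its value is $-1-\frac{19}{6}u^4\ge -1-\frac{19}{6}(12/17)^4\approx-1.79$. For $u\in[12/17,1]$ the minimum over $s$ is at the endpoint $s=1-u$, i.e.\ $x=0$. There $h_{22}=-1+\phi(u)$ with $\phi(u)=45u^4-68u^3+24u^2$ and $\phi'(u)=12u(3u-1)(5u-4)$. So the minimum is $\phi(4/5)=-128/125$, giving $h_{22}=-253/125$ \emph{exactly}, attained at $(0,\pm 1/\sqrt5)$ — the same point the paper's critical-point search produces.

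Consequently, the global minimum lies in the endpoint case, not the vertex case, and the stated bound is tight with no slack. Your crude fallback (dropping the nonnegative term and bounding $u^3\le 1$) yields a lower bound below $-5$, which fails. You must carry out the exact one-variable minimization of $\phi$.

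\paragraph{The upper bound.} This half survives the correction. At $s=1-u$ one gets $1-\phi(u)=(1-u)(45u^3-23u^2+u+1)$, and the cubic is positive on $[0,1]$ (its minimum there is about $0.44$). The endpoint $s=0$ gives $u^4\le 1$, so $h_{22}\le 0$ as you planned.
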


\begin{figure}[!ht]
    \centering
    \begin{tikzpicture}[
  declare function={
    func(\x,\y) = ( \x^2 + \y^2 <= 1 ) ? (\x^2+\y^2-1)^4+16 *\y^2 *(\x^2+\y^2-1)^3+4 *\y^2 *(\x^2+7* \y^2-1)* (\x^2+\y^2-1)^2-1 : -1;
  }
]
\begin{axis}[
    xlabel={$x$}, ylabel={$y$}, zlabel={$h_{2 2}(x,y;1)$}, xmin=-1, xmax=1, ymin=-1, ymax=1, zmin=-2.03, zmax=0.01, 
    grid=both,
    tick align=outside,
    tickpos=left,
    axis lines*=left,
    view={-37.5}{30},
    point meta min=-2.03,  
    point meta max=0.01,   
    colormap/jet,
]
    \addplot3 [
        contour gnuplot={contour dir=z,
        output point meta=rawz,
        number=20,
        labels=false},
        z filter/.expression={-2.03},
        domain=-1:1,
        domain y=-1:1,
        thick,
    ] {func(x,y)};
    
    \addplot3 [mesh,thick,
    samples=50,
    samples y=50,
    domain=-1:1,
    domain y=-1:1,
    ] {func(x,y)};
\end{axis}
\end{tikzpicture}
    \caption{The landscape of $h_{2 2}(\bullet,\bullet;1)$; for visual clarity, we overwrite $h_{2 2}(x,y;1)=-1$ on $(\bbB^2)^{\complement}$.}
    \label{fig:h22}
\end{figure}

\begin{proof}
    In a similar vein, we begin by calculating
    $$
        h_{2 2}(x,y;1)=(x^2+y^2-1)^4+16 y^2 (x^2+y^2-1)^3+4 y^2 (x^2+7 y^2-1) (x^2+y^2-1)^2-1,
    $$
    together with
    $$
        \frac{\partial}{\partial x}h_{2 2}(x,y;1)=8 x (x^2+y^2-1)^3+104 x y^2 (x^2+y^2-1)^2+16 x y^2 (x^2+7 y^2-1) (x^2+y^2-1)
    $$
    and
    $$
    \begin{aligned}
        \frac{\partial}{\partial y}h_{2 2}(x,y;1)={}& 40 y (x^2+y^2-1)^3+8 y (x^2+y^2-1)^2 (x^2+7 y^2-1)\\
        {}&+152 y^3 (x^2+y^2-1)^2+16 y^3 (x^2+y^2-1) (x^2+7 y^2-1).
    \end{aligned}
    $$
    By setting these partial derivatives to zero and simplifying, we obtain
    $$
        \begin{dcases}
            8 x (x^2+y^2-1) \bigl(17 (x^2-1) y^2+(x^2-1)^2+28 y^4\bigr)=0,\\
            24 y (x^2+y^2-1) (2 x^2+3 y^2-2) (x^2+5 y^2-1)=0,
        \end{dcases}
    $$
    which can be further simplified to
    $$
        \begin{dcases}
            x \bigl(17 (x^2-1) y^2+(x^2-1)^2+28 y^4\bigr)=0,\\
            y (2 x^2+3 y^2-2) (x^2+5 y^2-1)=0.
        \end{dcases}
    $$
    As the second equation is clearly more amenable to analysis, we shall exploit it first. This time, the subsequent discussions are divided into three cases. 
    \begin{itemize}
        \item Suppose that $y=0$. Then, we know from the other equation that
    $$
        x (x^2-1)^2=0 
        \implies
        x=0,\pm 1;
    $$
    i.e., two pairs of solutions, namely $(0,0)$ and $(\pm 1,0)$, arise in this case, but the latter should be discarded as it violates the assumption $\sqrt{x^2+y^2}\neq 1$.

        \item Suppose that $y^2=2(1-x^2)/3$. Then, we have
    $$
        x^3=x 
        \implies
        x=0,\pm 1;
    $$
    hence, $(0,\pm\sqrt{2/3})$ are the only solutions in this case, as $(\pm 1,0)$ should be discarded as well. 
    
        \item Suppose that $y^2=(1-x^2)/5$. Then, it follows, again, that
    $$
        x^3=x 
        \implies
        x=0,\pm 1;
    $$
    likewise, the solutions in this case are merely $(0,\pm\sqrt{1/5})$.
    \end{itemize}
    Summarizing the above discussions yields the desired result and thus completes the proof.
\end{proof}

\subsubsection{Stationarity refutations}
With Propositions~\ref{prop:tech-bound1},~\ref{prop:tech-bound2}, and~\ref{prop:tech-bound3} in place, we are ready to control $\langle\nabla^2 g(x,y;\delta),\bw\bw^{\T}\rangle$ along some specific $\bw\in\bbS^2$ that will prove useful in later developments.

\begin{corollary}\label{cor:uniformity}
    Let $\delta>0$ be a constant
    and
    $\bw:=(10,11)^{\T}/\sqrt{221}\in\bbS^{2}$. We have 
    \begin{equation}\label{eq:tech-est1}
        \begin{dcases}
            \langle\bH(x,y;\delta),\bw\bw^{\T}\rangle\le-0.0092714\quad\text{for all $x,y\in\R$ such that $\sqrt{x^2+y^2} \le \delta$}, \\
            \langle-\bI,\bw\bw^{\T}\rangle=-1\le-0.0092714\quad\text{for all $x,y\in\R$ such that $\sqrt{x^2+y^2} \ge \delta$}.
        \end{dcases}
    \end{equation}
    As a consequence, we have $\langle\nabla^2 g(x,y;\delta),\bw\bw^{\T}\rangle\le-0.0092714$ for all $x,y\in\R$.
\end{corollary}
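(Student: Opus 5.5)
The plan is to expand the inner product entrywise and then apply the three one-sided entry bounds already proved, each in the direction that makes it an upper bound. Since $\bw\bw^{\T}=\frac{1}{221}\begin{pmatrix}100 & 110\\ 110 & 121\end{pmatrix}$ and $\bH(x,y;\delta)$ is symmetric, for every $(x,y)$ with $\sqrt{x^2+y^2}\le\delta$ we have
$$
\langle\bH(x,y;\delta),\bw\bw^{\T}\rangle=\frac{1}{221}\Bigl(100\,h_{11}(x,y;\delta)+220\,h_{12}(x,y;\delta)+121\,h_{22}(x,y;\delta)\Bigr).
$$

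We bound the three terms as follows.
\begin{itemize}
\item The first term is controlled by the upper bound $h_{11}\le -71/98$ from Proposition~\ref{prop:tech-bound1}.
\item The second term is controlled by $h_{12}\le|h_{12}|\le 0.32$ from Proposition~\ref{prop:tech-bound2}.
\item The third term is controlled by $h_{22}\le 0$ from Proposition~\ref{prop:tech-bound3}. Its coefficient $121$ is positive, so this term can simply be dropped.
\end{itemize}
This yields
$$
\langle\bH(x,y;\delta),\bw\bw^{\T}\rangle\le\frac{1}{221}\Bigl(-\frac{7100}{98}+70.4\Bigr)=\frac{1}{221}\cdot\Bigl(-\frac{1004}{490}\Bigr)\approx-\frac{2.04898}{221}\approx-0.00927141,
$$
which is at most $-0.0092714$. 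This gives the first line of (\ref{eq:tech-est1}). The ratio $10{:}11$ in $\bw$ is clearly tuned so that the gain from the $h_{11}$ term just beats the loss from the off-diagonal term. The sign of $h_{12}$ is not controlled, which is why the absolute-value bound is used.

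The second line of (\ref{eq:tech-est1}) is trivial, since $\langle-\bI,\bw\bw^{\T}\rangle=-\|\bw\|^2=-1$. For the consequence, we use the piecewise formula for $\nabla^2 g(\bullet,\bullet;\delta)$ recorded before Lemma~\ref{lma:oracle}. It equals $\bH(x,y;\delta)$ on $\delta\bbB^2$ and $-\bI$ outside. On the circle $\sqrt{x^2+y^2}=\delta$ the two formulas agree, because $h_{11}=h_{22}=-1$ and $h_{12}=0$ there. Hence every point of $\R^2$ falls into one of the two cases of (\ref{eq:tech-est1}), and the claim follows.

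The only genuine obstacle is numerical tightness: the margin between $-0.0092714$ and the computed value is about $10^{-8}$. I would therefore verify the final arithmetic exactly. Here $-7100/98+70.4=-1004/490$, and $1004/(490\cdot 221)=1004/108290\approx 0.00927140$, which is indeed $\ge 0.0092714$. I would also confirm that the closed-form bound in Proposition~\ref{prop:tech-bound2} is genuinely $\le 0.32$ and not merely approximately so. If that bound were marginal, I would instead carry the exact radical expression for $\sup|h_{12}|$ through the computation.
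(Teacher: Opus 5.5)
Your proposal is correct and matches the paper's proof: you use the same entrywise expansion with weights $100,220,121$, the same three bounds $h_{11}\le-71/98$, $h_{12}\le 0.32$ and $h_{22}\le 0$ (dropping the last term), and you read the consequence off the piecewise Hessian formula for $g(\bullet,\bullet;\delta)$. One small numerical correction: the slack is about $9\times10^{-10}$, not $10^{-8}$ (indeed $0.0092714\cdot 108290=1003.999906<1004$), so your exact rational check is genuinely needed, and it does go through.
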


We remark that the said consequence is immediate from the $C^3$-smoothness of $g(\bullet,\bullet;\delta)$. 

\begin{proof}
    As the claim $\langle-\bI,\bw\bw^{\T}\rangle=-1$ is trivial, we thus focus on the remaining claim in (\ref{eq:tech-est1}). Indeed, it follows from Propositions~\ref{prop:tech-bound1},~\ref{prop:tech-bound2}, and~\ref{prop:tech-bound3} that for all $x,y\in\R$ with $\sqrt{x^2+y^2} \le \delta$,
    $$
    \begin{aligned}
        \langle\bH(x,y;\delta),\bw\bw^{\T}\rangle&=\frac{1}{221}\bigl(100 h_{1 1}(x,y;\delta)+220 h_{1 2}(x,y;\delta)+121 h_{2 2}(x,y;\delta)\bigr)\\
        &\le\frac{1}{221}\left(100 \left(-\frac{71}{98}\right) +220 \cdot 0.32\right)\le -0.0092714,
    \end{aligned}
    $$
    as desired. The proof is complete.
\end{proof}

\subsubsection{Lipschitz modulus estimations}
We next turn to estimate $\operatorname{lip}\nabla g(\bullet,\bullet;\delta)$.

\begin{corollary}\label{cor:spectral-bound2}
    Let $\delta>0$ be a constant. We have 
    $$
        \begin{dcases}
            \|\bH(x,y;\delta)\|_{\sigma}\le 4.086\quad\text{for all $x,y\in\R$ such that $\sqrt{x^2+y^2} \le \delta$},\\
            \mleft\|-\bI\mright\|_{\sigma}=1\le 4.086\quad\text{for all $x,y\in\R$ such that $\sqrt{x^2+y^2} \ge \delta$}.
        \end{dcases}
    $$
    As a consequence, we have $\operatorname{lip}\nabla g(x,y;\delta)\le 4.086$ for all $x,y\in\R$.
\end{corollary}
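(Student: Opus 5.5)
To prove Corollary~\ref{cor:spectral-bound2}, I would bound the spectral norm of the symmetric $2\times 2$ matrix $\bH(x,y;\delta)$ entrywise, using the ranges already established in Propositions~\ref{prop:tech-bound1},~\ref{prop:tech-bound2}, and~\ref{prop:tech-bound3}. The Lipschitz-modulus conclusion then follows from the global $C^3$-smoothness of $g(\bullet,\bullet;\delta)$.

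For a real symmetric matrix, $\|\bH\|_{\sigma}=\max\{|\lambda_{\max}|,|\lambda_{\min}|\}$. For the $2\times 2$ case the eigenvalues are
$$
\lambda_{\pm}=\frac{h_{11}+h_{22}}{2}\pm\sqrt{\Bigl(\frac{h_{11}-h_{22}}{2}\Bigr)^2+h_{12}^2}.
$$
On $\delta\bbB^2$ we have $h_{11}\le -71/98<0$ and $h_{22}\le 0$, so the trace is negative and $\|\bH\|_{\sigma}=-\lambda_{-}=\frac{|h_{11}+h_{22}|}{2}+\sqrt{\bigl(\frac{h_{11}-h_{22}}{2}\bigr)^2+h_{12}^2}$.

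I would bound this crudely but uniformly. Since $h_{11}\in[-91/64,-71/98]$ and $h_{22}\in[-253/125,0]$, we get $|h_{11}+h_{22}|/2\le(91/64+253/125)/2$. We also get $|h_{11}-h_{22}|/2\le\max\{91/64,\,253/125-71/98\}/2$, and $|h_{12}|\le 0.32$. Together these give a number close to the target.

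Let me check the arithmetic. We have $91/64\approx1.4219$ and $253/125=2.024$, so the first term is $\approx1.7229$. The difference term is at most $\max\{1.4219,\,2.024-0.7245=1.2995\}/2\approx0.7109$. Then $\sqrt{0.7109^2+0.32^2}\approx\sqrt{0.5054+0.1024}\approx0.7796$. The total is $\approx2.50$, comfortably below $4.086$. Even the cruder Frobenius bound $\sqrt{h_{11}^2+2h_{12}^2+h_{22}^2}\le\sqrt{2.022+0.205+4.097}\approx2.51$ suffices, which is the cleanest route and the one I would actually use.

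Outside $\delta\bbB^2$, the Hessian is $-\bI$ with norm $1$. On the circle $\sqrt{x^2+y^2}=\delta$ both formulas agree, as noted before Proposition~\ref{prop:tech-bound1}.

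For the consequence, $g(\bullet,\bullet;\delta)$ is $C^3$, so $\nabla g$ is $C^1$. The mean value inequality then gives $\|\nabla g(\bu)-\nabla g(\bv)\|\le\sup_{\bz\in[\bu,\bv]}\|\nabla^2 g(\bz)\|_{\sigma}\|\bu-\bv\|\le 4.086\|\bu-\bv\|$. Hence $\nabla g$ is globally $4.086$-Lipschitz, and $\operatorname{lip}\nabla g(x,y;\delta)\le 4.086$ everywhere; alternatively, this follows from the identity $\operatorname{lip}\nabla g=\|\nabla^2 g\|_{\sigma}$ for $C^1$ maps (\citep[Theorem~9.7]{rockafellar2009variational}).

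The only real obstacle is the entrywise ranges, and those are already supplied by the three propositions. The remaining step is a short numerical verification, so I expect no difficulty beyond confirming that the stated constant $4.086$ is not sharper than my bound.
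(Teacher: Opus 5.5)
Your proof is correct and follows the paper's route: you turn the entrywise ranges from Propositions~\ref{prop:tech-bound1}, \ref{prop:tech-bound2}, and~\ref{prop:tech-bound3} into a spectral-norm bound, then pass to $\operatorname{lip}\nabla g$ via the mean-value inequality (equivalently \citep[Theorem~9.7]{rockafellar2009variational}). The only difference is the norm inequality: the paper uses the cruder entrywise bound $\|\bH\|_{\sigma}\le|h_{11}|+2|h_{12}|+|h_{22}|\le 4.085875$, which is exactly where the constant $4.086$ comes from, whereas your Frobenius (or exact eigenvalue) estimate gives about $2.51$.
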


We note that the said consequence is immediate from~\citep[Theorem~9.7]{rockafellar2009variational}.

\begin{proof}
    Recall that $\|\bX\|_{\sigma}\le\|\bX\|_1$ for every matrix $\bX$; see, e.g.,~\citep[Proposition~4.2]{chen2020tensor}. Hence, it follows from Propositions~\ref{prop:tech-bound1},~\ref{prop:tech-bound2}, and~\ref{prop:tech-bound3} that for all $x,y\in\R$ with $\sqrt{x^2+y^2} \le \delta$,
    $$
    \begin{aligned}
        \|\bH(x,y;\delta)\|_{\sigma}\le\|\bH(x,y;\delta)\|_1&=\bigl|h_{1 1}(x,y;\delta)\bigr| + 2\,\bigl| h_{1 2}(x,y;\delta)\bigr| + \bigl|h_{2 2}(x,y;\delta)\bigr|\\
        &\le \frac{91}{64}+2\cdot 0.32+ \frac{253}{125}=4.085875\le 4.086,
    \end{aligned}
    $$
    as desired. The proof is complete as the other claim is trivial.
\end{proof}

\subsection{Preparation: The ambient function}
An essential object that has appeared in preceding discussions but has not yet been 
made precise
is the ambient function that is locally identical to $-2^{-1}\mleft\|\bullet\mright\|^2$.
In fact, this function is constructed by horizontally gluing shifted copies of the (cross-supported) local ambient function introduced below, with one copy placed at each iterate; recall from Section~\ref{sec:proof-theorem} that all the iterates lie on the $x$-axis.

To begin, consider the function $q(\bullet;\gamma):\R\rightarrow\R$ defined by
\begin{equation}\label{eq:def-q}
    q(x;\gamma):=
    \begin{dcases}
        -\frac{1}{2}x^2, & |x| \le \frac{\gamma}{4},\\
        \frac{1}{16} (-8 \gamma | x| +\gamma^2+8 x^2), & \frac{\gamma}{4}\le |x| \le \frac{3 \gamma}{4},\\
        -\frac{1}{2} (\gamma-|x| )^2, & \frac{3 \gamma}{4}\le |x| \le \gamma,\\
        0, & \textnormal{otherwise},
    \end{dcases}
\end{equation}
where $\gamma>0$.
This function is $C^{1,1}$, and it can be directly calculated that
\begin{equation}\label{eq:hess-correction}
    q^{\prime}(x;\gamma)=
    \begin{dcases}
        -x, & |x| \le \frac{\gamma}{4},\\
        x-\frac{\gamma}{2}\operatorname{sign}(x), & \frac{\gamma}{4}\le |x| \le \frac{3 \gamma}{4},\\
        \gamma \operatorname{sign}(x)-x, & \frac{3 \gamma}{4}\le |x| \le \gamma,\\
        0, & \textnormal{otherwise},
    \end{dcases}
    \quad\text{and}\quad
    q^{\prime\prime}(x;\gamma)=
    \begin{dcases}
        -1, & |x| < \frac{\gamma}{4},\\
        1, & \frac{\gamma}{4}< |x| < \frac{3 \gamma}{4},\\
        -1, & \frac{3 \gamma}{4}< |x| < \gamma,\\
        0, & |x| > \gamma.
    \end{dcases}
\end{equation}

\begin{figure}[!ht]
\resizebox{\linewidth}{!}{%
\centering
\begin{tikzpicture}[
  declare function={
    q(\x) = (abs(\x) < 0.25) * (-0.5*\x^2)
              + (abs(\x) >= 0.25 && abs(\x) < 0.75) * ((-8*abs(\x)+1+8*\x^2)/16)
              + (abs(\x) >= 0.75 && abs(\x) < 1) * (-0.5*(1-abs(\x))^2)
              + (abs(\x) >= 1) * 0;
  }
]
\begin{axis}[
    axis lines = left,
    grid=major,
    xlabel = $x$,
    ylabel = $q(x;1)$,
height=0.33*\linewidth,
width=\linewidth,
]
\addplot [
    domain=-1.2:1.2, 
    samples=500,
    color=black,
]
{q(x)};
\end{axis}
\end{tikzpicture}}
\caption{The landscape of $q(\bullet;1)$.}
\label{fig:q-func}
\end{figure}

With $q(\bullet;\gamma)$ in place, we are ready to introduce the local ambient function $h(\bullet,\bullet;\gamma):\R^2\rightarrow\R$ defined by
\begin{equation}\label{eq:def-h}
    h(x,y;\gamma):=q(x;\gamma)+q(y;1),
\end{equation}
which is (trivially) $C^{1,1}$ as well. We refer to this function as ``ambient'' as it satisfies that
\begin{equation}\label{eq:h-equals}
    h(x,y;\gamma)=-\frac{1}{2} x^2-\frac{1}{2} y^2=-2^{-1}\mleft\|\bullet\mright\|^2 (x,y)\quad\text{for all $(x,y)^{\T}\in\left[-\frac{\gamma}{4},\frac{\gamma}{4}\right]\times\left[-\frac{1}{4},\frac{1}{4}\right]$}.
\end{equation}
As in Section~\ref{sec:local-surgery}, we shall proceed with the refutation and smoothness properties of $h(\bullet,\bullet;\gamma)$.

\begin{figure}[!ht]
    \centering
    \begin{tikzpicture}[
  declare function={
    q(\x) = (abs(\x) < 0.25) * (-0.5*\x^2)
              + (abs(\x) >= 0.25 && abs(\x) < 0.75) * ((-8*abs(\x)+1+8*\x^2)/16)
              + (abs(\x) >= 0.75 && abs(\x) < 1) * (-0.5*(1-abs(\x))^2)
              + (abs(\x) >= 1) * 0;
    func(\x,\y) = q(\x)+q(\y);
  }
]
\begin{axis}[
    zticklabel style={/pgf/number format/fixed, /pgf/number format/precision=2},
    xlabel={$x$}, ylabel={$y$}, zlabel={$h(x,y;1)$}, xmin=-1.25, xmax=1.25, ymin=-1.25, ymax=1.25, zmin=-0.125, zmax=0, 
    grid=both,
    tick align=outside,
    tickpos=left,
    axis lines*=left,
    view={-37.5}{30},
    point meta min=-0.125,  
    point meta max=0,   
    colormap/jet,
]
    \addplot3 [
        contour gnuplot={contour dir=z,
        output point meta=rawz,
        number=20,
        labels=false},
        z filter/.expression={-0.125},
        domain=-1.25:1.25,
        domain y=-1.25:1.25,
        thick,
    ] {func(x,y)};
    
    \addplot3 [mesh,thick,
    samples=50,
    samples y=50,
    domain=-1.25:1.25,
    domain y=-1.25:1.25,
    ] {func(x,y)};
    
\end{axis}
\end{tikzpicture}
    \caption{The landscape of $h(\bullet,\bullet;1)$.}
    \label{fig:ambient-local}
\end{figure}

\subsubsection{Stationarity refutations}
By construction, the function $h(\bullet,\bullet;\gamma)$ enjoys a similar stationarity refutation to Corollary~\ref{cor:uniformity}.

\begin{proposition}\label{prop:ambient-refutation}
    Let $\gamma>0$ be a constant
    and
    $\bw:=(10,11)^{\T}/\sqrt{221}\in\bbS^{2}$. We have 
    $$
        \max\bigl\{\langle\bz,\bw\rangle:\bz\in\partial_{\dC}^2 h(x,y;\gamma)(\bw)\bigr\}\le-0.095\quad\text{for all $(x,y)^{\T}\in\R\times\left(-\frac{1}{4},\frac{1}{4}\right)$}.
    $$
\end{proposition}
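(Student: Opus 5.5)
We exploit the separable structure $h(x,y;\gamma)=q(x;\gamma)+q(y;1)$. Since $\nabla h(x,y;\gamma)=(q'(x;\gamma),q'(y;1))^{\T}$ and each $q'$ is Lipschitz and piecewise linear, $h$ is twice differentiable at every point where $|x|\notin\{\gamma/4,3\gamma/4,\gamma\}$ and $|y|\notin\{1/4,3/4,1\}$. There the Hessian is the diagonal matrix $\operatorname{diag}(q''(x;\gamma),q''(y;1))$, and by (\ref{eq:hess-correction}) the first entry $a:=q''(x;\gamma)$ takes a value in $\{-1,0,1\}$.

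The first step is to describe $\partial_{\dC}^2 h(x,y;\gamma)$ on the open band $\R\times(-1/4,1/4)$. For such points, every nearby $(x',y')$ has $|y'|<1/4$, so $q''(y';1)=-1$ wherever it exists. By Definition~\ref{def:second-subdiffs}, every limiting Hessian is therefore of the form $\operatorname{diag}(a,-1)$ with $a\in\{-1,0,1\}$. Taking convex hulls gives
$$
\partial_{\dC}^2 h(x,y;\gamma)\subseteq\bigl\{\operatorname{diag}(a,-1):a\in[-1,1]\bigr\}.
$$
If one prefers to avoid handling the set of twice-differentiability points directly, the same inclusion follows by applying Proposition~\ref{prop:Clarke-marginal} to the function $\langle\bw,\nabla h\rangle(x,y)=w_1q'(x;\gamma)+w_2q'(y;1)$. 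Near the band this function equals $w_1q'(x;\gamma)-w_2y$, and the Clarke subdifferential of a separable sum is the product of the coordinatewise Clarke subdifferentials (\citep[Proposition~2.5]{rockafellar1985extensions}). The one-dimensional Clarke subdifferential of $q'(\bullet;\gamma)$ is contained in $[-1,1]$ because its slopes are $\pm1$ or $0$.

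The second step is a direct computation. With $\bw=(10,11)^{\T}/\sqrt{221}$, every $\bz\in\partial_{\dC}^2 h(x,y;\gamma)(\bw)$ satisfies
$$
\langle\bz,\bw\rangle=\frac{100a-121}{221}\le\frac{100-121}{221}=-\frac{21}{221}\approx-0.09502\le-0.095.
$$
Taking the maximum over $\bz$ completes the argument.

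The main point needing care is the first step at points where $|x|\in\{\gamma/4,3\gamma/4,\gamma\}$, since $h$ is not twice differentiable there. We only need the inclusion, not the exact set: we use the crude bound $a\in[-1,1]$ for all limiting values of $q''(x;\gamma)$, and we use the openness of the band to guarantee that the $y$-component is identically $-1$ nearby.
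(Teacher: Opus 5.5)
Your proposal is correct and follows essentially the same route as the paper. Both arguments use separability to get limiting Hessians of the form $\operatorname{diag}(a,-1)$ on the open band, bound $a\in[-1,1]$ after convexification, and compute $(100a-121)/221\le -21/221\le -0.095$. The only cosmetic difference is that the paper writes the equality $\partial_{\dC}^2 h(x,y;\gamma)=\operatorname{diag}(\partial_{\dC}^2 q(x;\gamma),-1)$ before bounding, whereas you pass directly to the inclusion, which is all that is needed.
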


\begin{proof}
    As $h(\bullet,\bullet;\gamma)$ carries a separable structure, 
    for all $x\in\R$ and $y\in(-4^{-1},4^{-1})$, we have 
    $$
    \begin{aligned}
        \limsup_{\substack{(x^{\prime},y^{\prime})^{\T}\rightarrow(x,y)^{\T}\\(x^{\prime},y^{\prime})^{\T}\in\bbD\times(-1/4,1/4)}}\mleft\{\nabla^2 h(x^{\prime},y^{\prime};\gamma)\mright\}&=\limsup_{\substack{(x^{\prime},y^{\prime})^{\T}\rightarrow(x,y)^{\T}\\(x^{\prime},y^{\prime})^{\T}\in\bbD\times(-1/4,1/4)}}\mleft\{\begin{pmatrix}
            q^{\prime\prime}(x^{\prime};\gamma) & 0 \\
            0 & -1
        \end{pmatrix}\mright\}\\
        &=\begin{pmatrix}
            \displaystyle\limsup_{\substack{x^{\prime}\rightarrow x\\x^{\prime}\in\bbD}}\bigl\{q^{\prime\prime}(x^{\prime};\gamma)\bigr\} & 0 \\
            0 & -1
        \end{pmatrix},
    \end{aligned}    
    $$ 
    where $\mathbb{D}\subseteq\R$ is the set of points at which $q(\bullet;\gamma)$ is twice differentiable. By taking convex hulls on both sides, it further follows that
    $$
        \partial_{\dC}^2 h(x,y;\gamma)=
        \begin{pmatrix}
            \partial_{\dC}^2 q(x;\gamma) & 0 \\
            0 & -1
        \end{pmatrix}
        \quad\text{for all $(x,y)^{\T}\in\R\times\left(-\frac{1}{4},\frac{1}{4}\right)$}.
    $$
    However, we know from (\ref{eq:hess-correction}) that $\partial_{\dC}^2 q(x;\gamma)\subseteq[-1,1]$ for all $x\in\R$. As a result, it follows that
    $$
    \begin{aligned}
        \max\bigl\{\langle\bz,\bw\rangle:\bz\in\partial_{\dC}^2 h(x,y;\gamma)(\bw)\bigr\}
        &=\max\mleft\{\frac{1}{221} (100 a -121):a\in\partial_{\dC}^2 q(x;\gamma)\mright\}\\
        &\le\max\mleft\{\frac{1}{221} (100 a -121):a\in[-1,1]\mright\}\\
        &=-\frac{21}{221}\le-0.095\quad\text{for all $(x,y)^{\T}\in\R\times\left(-\frac{1}{4},\frac{1}{4}\right)$},
    \end{aligned}
    $$
    as desired. The proof is complete.
\end{proof}

\subsubsection{Lipschitz modulus estimations}
We next take some time to understand $\operatorname{lip} \nabla h(\bullet,\bullet;\gamma)$.
\begin{lemma}\label{lma:lip-h}
    Let $\gamma>0$ be a constant. We have $\operatorname{lip}\nabla h(x,y;\gamma)\le 1$ for all $x,y\in\R$.
\end{lemma}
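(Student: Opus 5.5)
Since $h(x,y;\gamma)=q(x;\gamma)+q(y;1)$ is separable, its gradient is
$$
\nabla h(x,y;\gamma)=\bigl(q^{\prime}(x;\gamma),\,q^{\prime}(y;1)\bigr)^{\T}.
$$
The plan is therefore to show first that each scalar derivative $q^{\prime}(\bullet;\gamma)$ (for arbitrary $\gamma>0$) is globally $1$-Lipschitz on $\R$. From this we get, for all $(x,y)^{\T},(x^{\prime},y^{\prime})^{\T}\in\R^2$,
$$
\|\nabla h(x,y;\gamma)-\nabla h(x^{\prime},y^{\prime};\gamma)\|^2=|q^{\prime}(x;\gamma)-q^{\prime}(x^{\prime};\gamma)|^2+|q^{\prime}(y;1)-q^{\prime}(y^{\prime};1)|^2\le |x-x^{\prime}|^2+|y-y^{\prime}|^2 .
$$
Hence $\nabla h(\bullet,\bullet;\gamma)$ is globally $1$-Lipschitz, and in particular $\operatorname{lip}\nabla h(x,y;\gamma)\le 1$ at every point.

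\textbf{Main step: $q^{\prime}(\bullet;\gamma)$ is $1$-Lipschitz.} Use the explicit formula (\ref{eq:hess-correction}).
\begin{enumerate*}[label=(\roman*)]
\item On each of the closed intervals $[-\gamma/4,\gamma/4]$, $\pm[\gamma/4,3\gamma/4]$, $\pm[3\gamma/4,\gamma]$, $(-\infty,-\gamma]$ and $[\gamma,\infty)$, the function $q^{\prime}(\bullet;\gamma)$ is affine with slope in $\{-1,0,1\}$, so it is $1$-Lipschitz on each piece.
\item At each breakpoint the adjacent formulas agree, so $q^{\prime}(\bullet;\gamma)$ is continuous. At $x=\gamma/4$ both $-x$ and $x-\gamma/2$ equal $-\gamma/4$. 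At $x=3\gamma/4$ both $x-\gamma/2$ and $\gamma-x$ equal $\gamma/4$. At $x=\gamma$ both $\gamma-x$ and $0$ vanish. The negative side follows by oddness of $q^{\prime}$.
\item A continuous, piecewise affine function on $\R$ with finitely many pieces, each of slope at most $1$ in absolute value, is globally $1$-Lipschitz. To see this, take $s<t$, insert the breakpoints lying in $[s,t]$, and apply the triangle inequality along the resulting subintervals.
\end{enumerate*}

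Alternatively, $q^{\prime}(\bullet;\gamma)$ is absolutely continuous with $|q^{\prime\prime}|\le 1$ almost everywhere, and $\operatorname{lip}$ is bounded via \citep[Theorem~9.7]{rockafellar2009variational}. This mirrors the argument used for Corollary~\ref{cor:spectral-bound2}: there $\nabla^2 h$ exists almost everywhere and equals $\operatorname{diag}(q^{\prime\prime}(x;\gamma),q^{\prime\prime}(y;1))$, whose spectral norm is at most $1$.

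I expect no real obstacle. The only point needing care is the continuity check of $q^{\prime}$ at the breakpoints $\pm\gamma/4,\pm3\gamma/4,\pm\gamma$, since the global Lipschitz bound rests on it. This check is a direct substitution into (\ref{eq:hess-correction}).
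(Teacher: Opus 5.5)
Your proof is correct and follows the paper's argument: you write $\nabla h(x,y;\gamma)=\bigl(q'(x;\gamma),q'(y;1)\bigr)^{\T}$, use that $q'(\bullet;\gamma)$ is $1$-Lipschitz, and combine the two coordinates in the Euclidean norm to get a global, hence local, Lipschitz constant of $1$. The only difference is that you check the $1$-Lipschitz property of $q'(\bullet;\gamma)$ by hand (continuity at the breakpoints and slopes in $\{-1,0,1\}$), whereas the paper cites \citep[Theorem~9.13]{rockafellar2009variational}; your aside invoking Theorem~9.7 is unnecessary, since absolute continuity with $|q''|\le 1$ a.e.\ already gives the bound directly.
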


\begin{proof}
    As $q^{\prime}(\bullet;\gamma)$ is $1$-Lipschitz continuous by~\citep[Theorem~9.13]{rockafellar2009variational}, it immediately follows that
$$
\begin{aligned}
    \|\nabla h(x_1,y_1;\gamma)-\nabla h(x_2,y_2;\gamma)\|&=\left\|\begin{pmatrix}
        q^{\prime}(x_1;\gamma) \\
        q^{\prime}(y_1;1)
    \end{pmatrix}-
    \begin{pmatrix}
        q^{\prime}(x_2;\gamma) \\
        q^{\prime}(y_2;1)
    \end{pmatrix}\right\|\\
    &=\sqrt{\bigl(q^{\prime}(x_1;\gamma)-q^{\prime}(x_2;\gamma)\bigr)^2+\bigl(q^{\prime}(y_1;1)-q^{\prime}(y_2;1)\bigr)^2}\\
    &\le\sqrt{(x_1-x_2)^2+(y_1-y_2)^2}=\left\|\begin{pmatrix}
        x_1 \\
        y_1
    \end{pmatrix}-
    \begin{pmatrix}
        x_2 \\
        y_2
    \end{pmatrix}\right\|;
\end{aligned}
$$
i.e., $\nabla h(\bullet,\bullet;\gamma)$ is $1$-Lipschitz continuous as well, from which the desired claim directly follows. 
\end{proof}

\subsection{Proof of Theorem~\ref{thm:lb-1}}\label{app:proof-theorem}
In this subsection, we 
prove Theorem~\ref{thm:lb-1} in a discussion manner.
In what follows, we shall restrict attention to the case $n=2$; for $n\ge 3$, it suffices to extend the construction to be given for $n=2$ in the canonical way.
Let $m\ge 1$ be arbitrary.

To begin, consider the resisting oracle $\mathbfcal{O}:\R^2\rightarrow\R\times\R^2\times\R_{\operatorname{sym}}^{2\times 2}$ defined by
\begin{equation*}
    \mathbfcal{O}(\bx):=\left(0,\bd{0},\begin{pmatrix}
            -1 & 0 \\
            0 & 0
        \end{pmatrix}\right)\quad\text{for all $\bx\in\R^2$}
\end{equation*}
and an arbitrary $\mathbfcal{A}\in\bbA_{\ddetzr}^2(m)$.
Let $\bx_0,\ldots,\bx_m$ be the iterates generated by $\mathbfcal{A}$ after $m$ rounds of interaction with $\mathbfcal{O}$. 
By 
the zero-respecting property of $\mathbfcal{A}$,
we have
$\bx_0=\bd{0}$ 
and 
$\operatorname{supp}(\bx_k)\subseteq \{1\}$ for all $k=0,\ldots,m$; i.e., $\bx_0,\ldots,\bx_m$ lie on the $x$-axis.
Without loss of generality, we may assume that $\bx_i\neq \bx_j$ for all $i\neq j$, and, by relabeling (and abusing notation), that $\bx_0,\ldots,\bx_m$ are sorted in ascending order of their $x$-coordinates.
In particular, this implies that $\bx_0$ may no longer be $\bd{0}$ anymore.
For ease of exposition, we also define $\bx_{-1}:=(-\infty,0)^{\T}$ and $\bx_{m+1}=(\infty,0)^{\T}$, and with a further 
abuse of notation, we denote by $x_k$ the $x$-coordinate of $\bx_k$ for $k=-1,\ldots,m+1$; e.g., $x_{-1}=-\infty$.
In what follows, as discussed in Section~\ref{sec:proof-theorem}, we shall construct a hard function that is compatible with $\mathbfcal{O}$ along $\bx_0,\ldots,\bx_m$ and possesses all the desired properties stated in Theorem~\ref{thm:lb-1}.

Define
$$
    \zeta:=\min\{\xi,1\}\in(0,1],\quad\text{where $\xi:=\min\bigl\{|x_i-x_j|:i,j=0,\ldots,m~\text{with}~i\neq j\bigr\}>0$}.
$$
Consider the hard function $f(\bullet,\bullet;\zeta):\R^2\rightarrow\R$ given by
\begin{equation}\label{eq:def-f}
    f(x,y;\zeta):=
    \begin{dcases}
        f_k(x,y;\zeta), & |x-x_k|\le \zeta/4,\\
        q(y;1), & \textnormal{otherwise},
    \end{dcases}
\end{equation}
where for $k=0,\ldots,m$,
$$
    f_k(x,y;\zeta):=p_2(x-x_k,y;\zeta/32)\cdot\left(-\frac{1}{2} (x-x_k)^2\right)+\bigl(1-p_2(x-x_k,y;\zeta/32)\bigr)\cdot h(x-x_k,y;\zeta/4).
$$
We begin the analysis of $f(\bullet,\bullet;\zeta)$ with a basic observation on the piecewise structure of $f_k(\bullet,\bullet;\zeta)$,
which makes clear that it results from the local surgery performed on the local ambient function.
\begin{lemma}
\label{lma:trivial-obs}
    For
    all $k=0,\ldots,m$, we have 
    \begin{equation}\label{eq:fact-fk}
        f_k(x,y;\zeta)=
        \begin{dcases}
            h(x-x_k,y;\zeta/4), & (x,y)^{\T}\notin (x_k,0)^{\T}+\frac{\zeta}{32}\bbB^2,\\
            g(x-x_k,y;\zeta/32), & (x,y)^{\T}\in\left[x_k-\frac{\zeta/4}{4},x_k+\frac{\zeta/4}{4}\right]\times\left[-\frac{1}{4},\frac{1}{4}\right].
        \end{dcases}
    \end{equation}
\end{lemma}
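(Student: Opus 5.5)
The plan is to prove both cases of (\ref{eq:fact-fk}) by direct substitution into the definition of $f_k$. Throughout, write $u:=x-x_k$ and
$$
f_k(x,y;\zeta)=p_2(u,y;\zeta/32)\cdot\left(-\frac{1}{2}u^2\right)+\bigl(1-p_2(u,y;\zeta/32)\bigr)\cdot h(u,y;\zeta/4).
$$

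\textbf{First case.} Suppose $(x,y)^{\T}\notin(x_k,0)^{\T}+\frac{\zeta}{32}\bbB^2$. Then $\sqrt{u^2+y^2}>\zeta/32$, so by (\ref{eq:def-p2}) we have $p_2(u,y;\zeta/32)=0$. The first term of $f_k$ then vanishes and the second reduces to $h(u,y;\zeta/4)$, which is the claim.

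\textbf{Second case.} Suppose $|u|\le(\zeta/4)/4$ and $|y|\le 1/4$. By (\ref{eq:h-equals}) with $\gamma=\zeta/4$, we get
$$
h(u,y;\zeta/4)=-\frac{1}{2}u^2-\frac{1}{2}y^2.
$$
Substituting this into the definition of $f_k$ gives
$$
f_k(x,y;\zeta)=p_2(u,y;\zeta/32)\cdot\left(-\frac{1}{2}u^2\right)+\bigl(1-p_2(u,y;\zeta/32)\bigr)\left(-\frac{1}{2}u^2-\frac{1}{2}y^2\right).
$$
This is exactly the defining formula of $g(u,y;\zeta/32)$, which proves the second case. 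Here (\ref{eq:h-equals}) follows directly from (\ref{eq:def-q}): on $|x|\le\gamma/4$ we have $q(x;\gamma)=-x^2/2$, and on $|y|\le 1/4$ we have $q(y;1)=-y^2/2$.

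\textbf{Consistency and main check.} On the overlap of the two regions both formulas must agree, and they do, because there $p_2=0$ and $g$ reduces to $-\frac{1}{2}\|\cdot\|^2$, which equals $h$. Beyond this, the only point needing care is the containment
$$
(x_k,0)^{\T}+\frac{\zeta}{32}\bbB^2\subseteq(x_k,0)^{\T}+\frac{1}{2}\left(\left[-\frac{\zeta}{16},\frac{\zeta}{16}\right]\times\left[-\frac{1}{4},\frac{1}{4}\right]\right),
$$
which shows the two cases cover $\R^2$. It holds because $\zeta/32=\frac{1}{2}\cdot\frac{\zeta}{16}$ and $\zeta/32\le 1/32\le 1/8$, using $\zeta\le 1$. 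I do not expect any real obstacle: the argument is bookkeeping with the supports of $p_2$ and $q$.
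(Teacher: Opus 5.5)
Your proof is correct and matches the paper's argument step for step. In the first case you use $p_2(x-x_k,y;\zeta/32)=0$ outside $(x_k,0)^{\T}+\frac{\zeta}{32}\bbB^2$. In the second you use the identity $h(u,y;\zeta/4)=-\frac{1}{2}u^2-\frac{1}{2}y^2$ on $[-\zeta/16,\zeta/16]\times[-1/4,1/4]$ to turn $f_k$ into $g(x-x_k,y;\zeta/32)$, and your exhaustiveness check via $\zeta/32\le 1/32\le 1/8$ is the same containment the paper records right before its proof.
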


Before presenting the proof, for
convenience, let us introduce for all $k=0,\ldots,m$ the shorthand
\begin{equation}\label{eq:Bk}
    \bbB_k:=(x_k,0)^{\T}+\frac{\zeta}{32}\bbB^2\subseteq(x_k,0)^{\T}+\frac{1}{2}\cdot\left(\left[-\frac{\zeta/4}{4},\frac{\zeta/4}{4}\right]\times\left[-\frac{1}{4},\frac{1}{4}\right]\right),
\end{equation}
where the containment follows, as $\zeta\in(0,1]$, from
$$
    \sqrt{x^2+y^2}\le\frac{\zeta}{32}\implies\left(|x|\le\frac{\zeta}{32}=\frac{1}{2}\cdot\frac{\zeta/4}{4}\quad\text{and}\quad|y|\le\frac{\zeta}{32}\le\frac{1}{32}\le\frac{1}{2}\cdot\frac{1}{4}\right).
$$

\begin{proof}
    If $(x,y)^{\T}\notin \bbB_k$ (i.e., $\sqrt{(x-x_k)^2+y^2}>\zeta/32$), 
    then we have $p_2(x-x_k,y;\zeta/32)=0$ by (\ref{eq:def-p2}), and hence $f_k(x,y;\zeta)=h(x-x_k,y;\zeta/4)$, as desired.
    If $(x,y)^{\T}\in[x_k-\zeta/16,\allowbreak x_k+\zeta/16]\allowbreak\times\allowbreak[-{1}/{4},\allowbreak {1}/{4}]$,
    then it follows from (\ref{eq:h-equals}) that $h(x-x_k,y;\zeta/4)=-2^{-1}(x-x_k)^2-2^{-1}y^2$, and hence
    $$
    \begin{aligned}
        f_k(x,y;\zeta):={}& p_2(x-x_k,y;\zeta/32)\cdot\left(-\frac{1}{2} (x-x_k)^2\right)\\
        {}&+\bigl(1-p_2(x-x_k,y;\zeta/32)\bigr)\cdot\left(-\frac{1}{2} (x-x_k)^2-\frac{1}{2} y^2\right)=g(x-x_k,y;\zeta/32),
    \end{aligned}
    $$
    as desired.   
    The proof is complete.
\end{proof}

With Lemma~\ref{lma:trivial-obs} in place, we are now 
ready to derive a piecewise representation of $f(\bullet,\bullet;\zeta)$,
revealing that it results from the local surgeries performed on the ambient function around every $\bx_k$.
\begin{proposition}\label{prop:covering}
    Consider the open sets $\bbH_0,\ldots,\bbH_m$ and $\bbP_0,\ldots,\bbP_m$ defined by
    $$
        \bbH_k:=\left(\left(x_{k-1}+\frac{\zeta}{4},x_{k+1}-\frac{\zeta}{4}\right)\times\R\right)\cap\bbB_k^{\complement}
        \quad\text{and}\quad
        \bbP_k:=\left(x_k-\frac{\zeta/4}{4},x_k+\frac{\zeta/4}{4}\right)\times\left(-\frac{1}{4},\frac{1}{4}\right).
    $$
    Their union constitutes an open cover of $\R^2$; i.e., $\bigcup_{k=0}^m\bbH_k\cup\bigcup_{k=0}^m\bbP_k=\R^2$. Besides, we have
    $$
        f(x,y;\zeta)=
        \begin{dcases}
            h(x-x_k,y;\zeta/4), & (x,y)^{\T}\in\bbH_k,\\
            g(x-x_k,y;\zeta/32), & (x,y)^{\T}\in\bbP_k.
        \end{dcases}
    $$
\end{proposition}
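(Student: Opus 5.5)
The proof splits into two parts: the covering claim, and the piecewise identification of $f$. Both rely only on the definition (\ref{eq:def-f}), Lemma~\ref{lma:trivial-obs}, the containment (\ref{eq:Bk}), and the separation $|x_i-x_j|\ge\xi\ge\zeta$ for $i\neq j$.

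\emph{Covering.} Let $(x,y)^{\T}\in\R^2$ be arbitrary. First pick the index $k$ with $x_{k-1}+\zeta/4<x<x_{k+1}-\zeta/4$, if one exists. Since consecutive iterates are at least $\zeta$ apart, the intervals $(x_{k-1}+\zeta/4,x_{k+1}-\zeta/4)$ for $k=0,\ldots,m$ cover $\R$, because consecutive ones overlap. Concretely, every $x$ lies in one of the intervals $(x_{k-1},x_k]$ or $(x_m,\infty)$, and a short case check shows it also lies in one of the enlarged intervals above. Fix such a $k$. If $(x,y)^{\T}\notin\bbB_k$, then $(x,y)^{\T}\in\bbH_k$. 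Otherwise $(x,y)^{\T}\in\bbB_k$, and by (\ref{eq:Bk}) $\bbB_k$ lies inside half of the rectangle, hence in the open rectangle $\bbP_k$. All sets are open, because $\bbB_k$ is closed and the rest are products of open intervals.

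\emph{Piecewise formula on $\bbP_k$.} Points of $\bbP_k$ satisfy $|x-x_k|<\zeta/16\le\zeta/4$, so the first case of (\ref{eq:def-f}) applies with index $k$. This index is unique, since $|x-x_j|\ge\zeta-\zeta/16>\zeta/4$ for $j\neq k$. Hence $f=f_k$, and the second case of Lemma~\ref{lma:trivial-obs} gives $f=g(x-x_k,y;\zeta/32)$.

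\emph{Piecewise formula on $\bbH_k$.} Split into two cases.
\begin{enumerate}
\item If $|x-x_k|\le\zeta/4$, then $f=f_k$ (again the index is unique). Since $(x,y)^{\T}\notin\bbB_k$, the first case of Lemma~\ref{lma:trivial-obs} gives $f=h(x-x_k,y;\zeta/4)$.
\item If $|x-x_k|>\zeta/4$, we first show that $|x-x_j|>\zeta/4$ for every $j$. For $j\le k-1$ this follows from $x>x_{k-1}+\zeta/4$ and monotonicity, and similarly for $j\ge k+1$. So $f=q(y;1)$ by definition. On the other hand, $h(x-x_k,y;\zeta/4)=q(x-x_k;\zeta/4)+q(y;1)$, and $q(x-x_k;\zeta/4)=0$ because $|x-x_k|>\zeta/4=\gamma$ by (\ref{eq:def-q}). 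Hence the two expressions agree.
\end{enumerate}

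The only delicate step is the index bookkeeping in case 2, namely that leaving the $k$-th window while staying inside $(x_{k-1}+\zeta/4,x_{k+1}-\zeta/4)$ forces one outside every window. This follows from sortedness together with the conventions $x_{-1}=-\infty$ and $x_{m+1}=\infty$. Everything else is direct substitution.
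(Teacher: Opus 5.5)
Your proof is correct and follows essentially the same route as the paper. You cover $\R^2$ using the $x$-windows $(x_{k-1}+\zeta/4,x_{k+1}-\zeta/4)$ and the inclusion $\mathbb{B}_k\subseteq\mathbb{P}_k$ from (\ref{eq:Bk}); you get $f$ on $\mathbb{P}_k$ from the second case of Lemma~\ref{lma:trivial-obs}, and on $\mathbb{H}_k$ from the same split $|x-x_k|\le\zeta/4$ versus $|x-x_k|>\zeta/4$. The differences are minor: you choose the window before splitting on $\mathbb{B}_k$, whereas the paper first splits on $\bigcup_k\mathbb{B}_k$, and you explicitly check that the index in (\ref{eq:def-f}) is unique, which the paper leaves implicit.
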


\begin{figure}[!ht]
    \centering
    \resizebox{\linewidth}{!}{
    \input{covering.tikz}
    }
    \caption{An illustration of $\bbB_k$, $\bbP_k$, and (implicitly) $\bbH_k$ 
    for $k=0,\ldots,m$
    when $m=2$
    and $\zeta=1$.
    }
    \label{fig:covering}
\end{figure}

\begin{proof}
    We shall first prove $\bigcup_{k=0}^m\bbH_k\cup\bigcup_{k=0}^m\bbP_k=\R^2$; as $\R^2=\bigl(\bigcup_{k=0}^m \bbB_k\bigr)\cup\bigl(\bigcup_{k=0}^m \bbB_k\bigr)^{\complement}$, it suffices to cover these two sets by $\bigcup_{k=0}^m\bbP_k$ and $\bigcup_{k=0}^m\bbH_k$, respectively. By (\ref{eq:Bk}), we have $\bbB_k\subseteq\bbP_k$ for all $k=0,\ldots,m$, and hence $\bigcup_{k=0}^m \bbB_k\subseteq\bigcup_{k=0}^m\bbP_k$; 
    it remains to show $\bigl(\bigcup_{k=0}^m \bbB_k\bigr)^{\complement}\subseteq\bigcup_{k=0}^m\bbH_k$. To this end, let $(x,y)^{\T}\in\bigl(\bigcup_{k=0}^m \bbB_k\bigr)^{\complement}$ be arbitrary; i.e., we have $(x,y)^{\T}\notin\bbB_k$ for all $k=0,\ldots,m$. Suppose that $|x-x_{k^{\prime}}|\le \zeta/4$ for some $k^{\prime}=0,\ldots,m$. Then, 
    we must have $x\in(x_{k^{\prime}-1}+{\zeta}/{4},x_{k^{\prime}+1}-{\zeta}/{4})$, and hence $(x,y)^{\T}\in\bbH_{k^{\prime}}\subseteq\bigcup_{k=0}^m\bbH_k$, as desired. On the other hand, suppose that $|x-x_k|> \zeta/4$ for all $k=0,\ldots,m$. Then, there must exist some $k^{\prime}=0,\ldots,m$ such that 
    $$
        x\in(x_{k^{\prime}-1}+{\zeta}/{4},x_{k^{\prime}}-{\zeta}/{4})\cup(x_{k^{\prime}}+{\zeta}/{4},x_{k^{\prime}+1}-{\zeta}/{4})\subseteq(x_{k^{\prime}-1}+{\zeta}/{4},x_{k^{\prime}+1}-{\zeta}/{4});
    $$
    i.e., $(x,y)^{\T}\in\bbH_{k^{\prime}}\subseteq\bigcup_{k=0}^m\bbH_k$, as desired. This establishes the covering property.
    
    We next turn to study the piecewise behavior of $f(\bullet,\bullet;\zeta)$. Suppose that $(x,y)^{\T}\in\bbH_k$ for some $k=0,\ldots,m$. Then, we divide the subsequent discussions into two cases.
    \begin{itemize}
        \item If $|x-x_k|\le \zeta/4$, then we know by (\ref{eq:def-f}) that $f(x,y;\zeta)=f_k(x,y;\zeta)$. However, as $(x,y)^{\T}\notin\bbB_k$ by the definition of $\bbH_k$, it also follows from (\ref{eq:fact-fk}) that $f_k(x,y;\zeta)=h(x-x_k,y;\zeta/4)$. By putting the above two pieces together, we have $f(x,y;\zeta)=h(x-x_k,y;\zeta/4)$, as desired.

        \item If $|x-x_k|> \zeta/4$, then we know by the definition of $\bbH_k$ that 
        $$
            x\in(x_{k-1}+{\zeta}/{4},x_k-{\zeta}/{4})\cup(x_k+{\zeta}/{4},x_{k+1}-{\zeta}/{4});
        $$
        as a result, there does not exist any $k^{\prime}=0,\ldots,m$ such that $|x-x_{k^{\prime}}|\le \zeta/4$. This, together with (\ref{eq:def-f}), implies that $f(x,y;\zeta)=q(y;1)$. However, as $|x-x_k|> \zeta/4$, we also know by (\ref{eq:def-h}) and (\ref{eq:def-q}) that $h(x-x_k,y;\zeta/4)=q(x-x_k;\zeta/4)+q(y;1)=q(y;1)$. Hence, we have $f(x,y;\zeta)=h(x-x_k,y;\zeta/4)$, as desired.
    \end{itemize}
    On the other hand, suppose that $(x,y)^{\T}\in\bbP_k$ for some $k=0,\ldots,m$. Then, we have $|x-x_k|\le\zeta/16\le\zeta/4$, and hence $f(x,y;\zeta)=f_k(x,y;\zeta)$ by (\ref{eq:def-f}). However, by combining the definition of $\bbP_k$ and (\ref{eq:fact-fk}), we also know that $f_k(x,y;\zeta)=g(x-x_k,y;\zeta/32)$. Hence, we have $f(x,y;\zeta)=g(x-x_k,y;\zeta/32)$, as desired. The whole proof is now complete.
\end{proof}


As an immediate consequence of Proposition~\ref{prop:covering}, some desired properties of $f(\bullet,\bullet;\zeta)$ follow.

\begin{corollary}\label{cor:property-1}
    The following two statements hold.
    \begin{itemize}
        \item The function $f(\bullet,\bullet;\zeta)$ is $C^{1}$ with $\nabla f(\bullet,\bullet;\zeta)$ being $4.086$-Lipschitz continuous.
        
        \item The function $f(\bullet,\bullet;\zeta)$ is compatible with $\mathbfcal{O}$ along $\bx_0,\ldots,\bx_m$, and is $C^3$ around $\bx_0,\ldots,\bx_m$.

    \end{itemize}
\end{corollary}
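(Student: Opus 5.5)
The plan is to read everything off the open cover of Proposition~\ref{prop:covering}. On each open set $\bbH_k$ the function $f(\bullet,\bullet;\zeta)$ equals $h(\bullet-x_k,\bullet;\zeta/4)$, and on each $\bbP_k$ it equals $g(\bullet-x_k,\bullet;\zeta/32)$. Both model functions are $C^1$: $h$ is $C^{1,1}$ by construction and $g$ is $C^3$. Since the sets are open and cover $\R^2$, $f(\bullet,\bullet;\zeta)$ is $C^1$ everywhere.

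For the local Lipschitz modulus of $\nabla f$, I would argue as follows. Every point lies in some open $\bbH_k$ or $\bbP_k$, so $\operatorname{lip}\nabla f(\bx)$ coincides with $\operatorname{lip}\nabla h(\cdot)$ or $\operatorname{lip}\nabla g(\cdot)$ at the translated point. Lemma~\ref{lma:lip-h} and Corollary~\ref{cor:spectral-bound2} then give $\operatorname{lip}\nabla f(\bx)\le 4.086$ for all $\bx\in\R^2$.

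The main step is passing from this pointwise bound to a global Lipschitz constant. Here I would use that $\R^2$ is convex. For any $\bx,\by$, the map $t\mapsto\nabla f(\bx+t(\by-\bx))$ is locally $4.086\|\by-\bx\|$-Lipschitz at every $t\in[0,1]$. A compactness argument, or equivalently \citep[Theorem~9.2]{rockafellar2009variational} (that the global constant on a convex set is the supremum of $\operatorname{lip}$), then yields $\|\nabla f(\bx)-\nabla f(\by)\|\le 4.086\|\bx-\by\|$. Concretely, I would cover the segment by finitely many of the open pieces, subdivide it at points lying in overlaps, apply the bound on each sub-segment, and sum using the triangle inequality. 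This is the only place where some care is needed; everything else is bookkeeping.

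For the second bullet, note that $\bx_k=(x_k,0)^{\T}\in\bbP_k$, which is an open set on which $f(x,y;\zeta)=g(x-x_k,y;\zeta/32)$ with $g$ being $C^3$. Hence $f$ is $C^3$ near $\bx_k$. Moreover, by Lemma~\ref{lma:oracle},
$$
f(\bx_k;\zeta)=0,\quad \nabla f(\bx_k;\zeta)=\bd{0},\quad \nabla^2 f(\bx_k;\zeta)=-\be_1\be_1^{\T},
$$
which is exactly $\mathbfcal{O}(\bx_k)$. This gives compatibility along $\bx_0,\ldots,\bx_m$.
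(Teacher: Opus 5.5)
Your proposal is correct and follows essentially the same route as the paper. Like the paper, you use the open cover of Proposition~\ref{prop:covering} to bound $\operatorname{lip}\nabla f$ pointwise via Lemma~\ref{lma:lip-h} and Corollary~\ref{cor:spectral-bound2}, pass to the global constant $4.086$ via \citep[Theorem~9.2]{rockafellar2009variational} (your segment-subdivision argument simply unpacks that theorem), and read off local $C^3$-smoothness and oracle compatibility from $\bx_k\in\bbP_k$ together with Lemma~\ref{lma:oracle}.
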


\begin{proof}
Let $(x,y)^{\T}\in\R^2$ be arbitrary. We know from Proposition~\ref{prop:covering} that $(x,y)^{\T}$ must belong to some $\bbH_k$ or $\bbP_k$. If $(x,y)^{\T}\in\bbH_k$, then it follows from the facts that $\bbH_k$ is open and $h(\bullet-x_k,\bullet;\zeta/4)$ is $C^1$, Proposition~\ref{prop:covering}, and Lemma~\ref{lma:lip-h} that $f(\bullet,\bullet;\zeta)$ is $C^1$ around $(x,y)^{\T}$ with 
$$
    \operatorname{lip}\nabla f(x,y;\zeta)=\operatorname{lip}\nabla h(x-x_k,y;\zeta/4)\le 1\le 4.086;
$$
recall from~\citep[Definition~9.1(b)]{rockafellar2009variational} that $\operatorname{lip}\nabla f(\bullet,\bullet;\zeta)$ is defined only locally. If $(x,y)^{\T}\in\bbP_k$, then it follows from the facts that $\bbP_k$ is open and $g(\bullet-x_k,\bullet;\zeta/32)$ is $C^3$, Proposition~\ref{prop:covering}, and Corollary~\ref{cor:spectral-bound2} that $f(\bullet,\bullet;\zeta)$ is $C^1$ around $(x,y)^{\T}$ with 
$$
    \operatorname{lip}\nabla f(x,y;\zeta)=\operatorname{lip}\nabla g(x-x_k,y;\zeta/32)\le 4.086.
$$ 
In summary,
it follows that $f(\bullet,\bullet;\zeta)$ is $C^{1}$ with $\operatorname{lip}\nabla f(x,y;\zeta)\le 4.086$ for all $x,y\in\R$; i.e., $\nabla f(\bullet,\bullet;\zeta)$ is Lipschitz continuous with constant $4.086$~\citep[Theorem~9.2]{rockafellar2009variational}, as desired.

To see the other claim, it suffices to combine (\ref{eq:def-f}) and (\ref{eq:fact-fk}) with Lemma~\ref{lma:oracle} and the $C^3$-smoothness of $g(\bullet,\bullet;\delta)$. The proof is complete.
\end{proof}

Another consequence of Proposition~\ref{prop:covering} is a strong stationarity refutation property of $f(\bullet,\bullet;\zeta)$.

\begin{corollary}[Band refutation]\label{cor:band-refutation}
Let
$\bw:=(10,11)^{\T}/\sqrt{221}\in\bbS^{2}$. We have 
    $$
        \max\mleft\{\langle\bz,\bw\rangle:\bz\in\operatorname{conv}\mleft(\bigcup_{(x,y)^{\T}\in\R\times(-1/4,1/4)}\partial_{\dC}^2 f(x,y;\zeta)(\bw)\mright)\mright\}\le -0.0092714.
    $$
\end{corollary}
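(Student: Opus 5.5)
The plan is to reduce the claim to a pointwise bound and then use the piecewise representation from Proposition~\ref{prop:covering}. The function $\bz\mapsto\langle\bz,\bw\rangle$ is linear, so its maximum over the convex hull of a set equals its supremum over the set. It therefore suffices to show that for every $(x,y)^{\T}\in\R\times(-1/4,1/4)$ and every $\bz\in\partial_{\dC}^2 f(x,y;\zeta)(\bw)$ we have $\langle\bz,\bw\rangle\le-0.0092714$. If the convex hull is not closed, the supremum is still bounded by the same constant, so the inequality survives.

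Fix such a point $(x,y)^{\T}$. By Proposition~\ref{prop:covering}, it lies in some open set $\bbH_k$ or $\bbP_k$. The Clarke generalized Hessian is defined through limits of Hessians at nearby points, so it is a local object: on an open set where $f(\bullet,\bullet;\zeta)$ agrees with another $C^{1,1}$-function, the two share the same twice-differentiability set and the same Hessians nearby. Hence $\partial_{\dC}^2 f(x,y;\zeta)$ coincides with the generalized Hessian of the local representative. Translating by $x_k$ only shifts the argument, so it does not change $\partial_{\dC}^2$.

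There are two cases.
\begin{itemize}
\item If $(x,y)^{\T}\in\bbP_k$, then $f$ equals $g(\bullet-x_k,\bullet;\zeta/32)$ near the point. Since $g$ is $C^3$, the generalized Hessian is the singleton $\{\nabla^2 g(x-x_k,y;\zeta/32)\}$. Then
$\langle\bz,\bw\rangle=\langle\nabla^2 g,\bw\bw^{\T}\rangle\le-0.0092714$ by Corollary~\ref{cor:uniformity}.
\item If $(x,y)^{\T}\in\bbH_k$, then $f$ equals $h(\bullet-x_k,\bullet;\zeta/4)$ near the point. Because $y\in(-1/4,1/4)$, the shifted point $(x-x_k,y)^{\T}$ lies in $\R\times(-1/4,1/4)$. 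Proposition~\ref{prop:ambient-refutation} then gives the bound $-0.095\le-0.0092714$.
\end{itemize}

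The main step needing care is the localization claim $\partial_{\dC}^2 f(\bx)=\partial_{\dC}^2 \phi(\bx)$ whenever $f=\phi$ on an open neighborhood of $\bx$. I would justify it directly from Definition~\ref{def:second-subdiffs}: the $\limsup$ over $\by\to\bx$ only involves $\by$ in an arbitrarily small neighborhood. On that neighborhood, twice differentiability of $f$ and of $\phi$ coincide, and so do their Hessians. The convex hulls are therefore equal. Everything else is a direct citation of earlier results.
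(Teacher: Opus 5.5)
Your proposal is correct and matches the paper's proof essentially step for step. Like the paper, you reduce the convex-hull bound to a pointwise bound on $\partial_{\dC}^2 f(x,y;\zeta)(\bw)$ for points in the band, and then use Proposition~\ref{prop:covering} to split into two cases: the $\bbP_k$ case, handled by Corollary~\ref{cor:uniformity}, and the $\bbH_k$ case, handled by Proposition~\ref{prop:ambient-refutation}. Your explicit argument that the Clarke generalized Hessian is local and unaffected by translation justifies a step the paper uses without stating it.
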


\begin{proof}
    Let $\bz\in\operatorname{conv}\bigl(\bigcup_{(x,y)^{\T}\in\R\times(-1/4,1/4)}\partial_{\dC}^2 f(x,y;\zeta)(\bw)\bigr)$ be arbitrary. By the definition of convex hulls, we have $\bz=\sum_{k^{\prime}=1}^{m^{\prime}} t_{k^{\prime}}\cdot\bz_{k^{\prime}}$ for some $m^{\prime}\ge 1$, $\bz_{k^{\prime}}\in\bigcup_{(x,y)^{\T}\in\R\times(-1/4,1/4)}\partial_{\dC}^2 f(x,y;\zeta)(\bw)$, and $t_{k^{\prime}}\ge 0$ with $\sum_{k^{\prime}=1}^{m^{\prime}} t_{k^{\prime}}=1$. Besides, we also know by the definition of $\bz_{k^{\prime}}$ that there further exist some $x_{k^{\prime}}^{\prime}\in\R$ and $y_{k^{\prime}}^{\prime}\in(-1/4,1/4)$ such that $\bz_{k^{\prime}}\in\partial_{\dC}^2 f(x_{k^{\prime}}^{\prime},y_{k^{\prime}}^{\prime};\zeta)(\bw)$. Hence, it follows that
    $$
    \begin{aligned}
        \langle\bz,\bw\rangle=\sum_{k^{\prime}=1}^{m^{\prime}} t_{k^{\prime}}\cdot\langle\bz_{k^{\prime}},\bw\rangle\le\sum_{k^{\prime}=1}^{m^{\prime}} t_{k^{\prime}}\cdot\max\mleft\{\langle\bz,\bw\rangle:\bz\in\partial_{\dC}^2 f(x_{k^{\prime}}^{\prime},y_{k^{\prime}}^{\prime};\zeta)(\bw)\mright\};
    \end{aligned}
    $$
    this motivates us to study $\max\bigl\{\langle\bz,\bw\rangle:\bz\in\partial_{\dC}^2 f(x_{k^{\prime}}^{\prime},y_{k^{\prime}}^{\prime};\zeta)(\bw)\bigr\}$
    instead. However, as $x_{k^{\prime}}^{\prime}\in\R$ and $y_{k^{\prime}}^{\prime}\in(-1/4,1/4)$, it further suffices to study $\max\bigl\{\langle\bz,\bw\rangle:\bz\in\partial_{\dC}^2 f(x,y;\zeta)(\bw)\bigr\}$ for some arbitrary $x\in\R$ and $y\in(-1/4,1/4)$.
    
    Let $(x,y)^{\T}\in\R\times(-1/4,1/4)$ be arbitrary. We know from Proposition~\ref{prop:covering} that $(x,y)^{\T}$ must belong to some $\bbH_k$ or $\bbP_k$. If $(x,y)^{\T}\in\bbH_k$, then it follows from Propositions~\ref{prop:covering} and~\ref{prop:ambient-refutation} 
    that
    $$
        \max\mleft\{\langle\bz,\bw\rangle:\bz\in\partial_{\dC}^2 f(x,y;\zeta)(\bw)\mright\}=\max\mleft\{\langle\bz,\bw\rangle:\bz\in\partial_{\dC}^2 h(x-x_k,y;\zeta/4)(\bw)\mright\}\le -0.095.
    $$
    If $(x,y)^{\T}\in\bbP_k$, then it follows from Proposition~\ref{prop:covering} and Corollary~\ref{cor:uniformity} that
    $$
        \max\mleft\{\langle\bz,\bw\rangle:\bz\in\partial_{\dC}^2 f(x,y;\zeta)(\bw)\mright\}=\langle\nabla^2 g(x-x_k,y;\zeta/32),\bw\bw^{\T}\rangle\le-0.0092714.
    $$
    By combining the above two pieces, we have $\max\bigl\{\langle\bz,\bw\rangle:\bz\in\partial_{\dC}^2 f(x,y;\zeta)(\bw)\bigr\}\le-0.0092714$ for all $(x,y)^{\T}\in\R\times(-1/4,1/4)$, which implies that $\max\bigl\{\langle\bz,\bw\rangle:\bz\in\partial_{\dC}^2 f(x_{k^{\prime}}^{\prime},y_{k^{\prime}}^{\prime};\zeta)(\bw)\bigr\}\le-0.0092714$ for all $k^{\prime}=1,\ldots,m^{\prime}$, and thus $\langle\bz,\bw\rangle\le -0.0092714$, as desired. We are done.
\end{proof}

With the previous two results in place and noting that for all $\bw\in\R^2$ and $k=0,\ldots,m$,
$$
    \operatorname{conv}\mleft(\bigcup_{(x,y)^{\T}\in(x_k,0)^{\T}+\bbB^2/8}\partial_{\dC}^2 f(x,y;\zeta)(\bw)\mright)\subseteq\operatorname{conv}\mleft(\bigcup_{(x,y)^{\T}\in\R\times(-1/4,1/4)}\partial_{\dC}^2 f(x,y;\zeta)(\bw)\mright),
$$
all that remains is to bound the initial optimality gap $f(0,0;\zeta)-\min\{f(x,y;\zeta):x,y\in\R\}$. 
\begin{lemma}
    We have $f(0,0;\zeta)-\min\{f(x,y;\zeta):x,y\in\R\}\le 17/256$.
\end{lemma}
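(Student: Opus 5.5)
The plan is to show separately that $f(0,0;\zeta)=0$ and that $f(x,y;\zeta)\ge -17/256$ for all $(x,y)^{\T}\in\R^2$; subtracting gives the claim. Both parts rest on the piecewise description in Proposition~\ref{prop:covering}.

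\emph{Step 1: the value at the origin.} Before relabeling, the first iterate produced by $\mathbfcal{A}$ is $\bd{0}$, so $0=x_{k_0}$ for some index $k_0$. The point $(0,0)^{\T}$ lies in $\bbP_{k_0}$, so Proposition~\ref{prop:covering} gives $f(0,0;\zeta)=g(0,0;\zeta/32)$. By Lemma~\ref{lma:oracle} this equals $0$.

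\emph{Step 2: a lower bound on $\bbH_k$.} By Proposition~\ref{prop:covering}, every point lies in some $\bbH_k$ or some $\bbP_k$. On $\bbH_k$ we have $f=h(x-x_k,y;\zeta/4)=q(x-x_k;\zeta/4)+q(y;1)$, so it suffices to minimize $q(\bullet;\gamma)$ over $\R$. I will do this piece by piece from (\ref{eq:def-q}):
\begin{itemize}
\item on $|x|\le\gamma/4$, $q\ge-\gamma^2/32$;
\item on $\gamma/4\le|x|\le3\gamma/4$, $q=(8x^2-8\gamma|x|+\gamma^2)/16$ is minimized at $|x|=\gamma/2$, with value $-\gamma^2/16$;
\item on $3\gamma/4\le|x|\le\gamma$, $q\ge-\gamma^2/32$;
\item for $|x|>\gamma$, $q=0$.
\end{itemize}
Hence $\min q(\bullet;\gamma)=-\gamma^2/16$. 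Since $\zeta\le1$, this gives on $\bbH_k$
\[
f\ge-\frac{(\zeta/4)^2}{16}-\frac1{16}\ge-\frac1{256}-\frac{16}{256}=-\frac{17}{256}.
\]

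\emph{Step 3: a lower bound on $\bbP_k$.} Here $f=g(x-x_k,y;\zeta/32)$. Writing $u:=x-x_k$, we have
\[
g(u,y;\delta)=-\tfrac12u^2-\tfrac12\bigl(1-p_2(u,y;\delta)\bigr)y^2\ge-\tfrac12(u^2+y^2),
\]
because $p_2\in[0,1]$. On $\bbP_k$ we have $|u|<\zeta/16\le1/16$ and $|y|<1/4$, so $g\ge-\tfrac12\bigl(\tfrac1{256}+\tfrac1{16}\bigr)=-\tfrac{17}{512}\ge-\tfrac{17}{256}$.

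Combining Steps 1--3 gives $f(0,0;\zeta)-\min f\le 17/256$. No step is a real obstacle; the only care needed is in Step 2. There one must track that the $x$-part uses the scale $\gamma=\zeta/4$ while the $y$-part uses scale $1$. One must also check that the minimum of the middle piece of $q$ falls at $|x|=\gamma/2$, where it equals $-\gamma^2/16$.
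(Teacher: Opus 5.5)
Your proof is correct and follows essentially the same route as the paper: you get $f(0,0;\zeta)=0$ from oracle compatibility at the iterate located at the origin, and the uniform bound $f\ge -17/256$ from $\min q(\bullet;\gamma)=-\gamma^2/16$ on the $h$-pieces together with $g\ge-\tfrac12(u^2+y^2)$ on the surgery pieces. The only cosmetic difference is in how the plane is split: you use the open cover $\bbH_k,\bbP_k$ of Proposition~\ref{prop:covering} and bound $g$ on the rectangle $\bbP_k$, whereas the paper splits via (\ref{eq:def-f}) and Lemma~\ref{lma:trivial-obs} and bounds $g$ on the ball $\bbB_k$.
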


We remark that as $f(0,0;\zeta)=0$ (as $(0,0)^{\T}$ is some iterate and $f(\bullet,\bullet;\zeta)$ is compatible with $\mathbfcal{O}$, whose zeroth-order output is always $0$, along all the iterates; see Corollary~\ref{cor:property-1}), it suffices to verify that $f(x,y;\zeta)\ge -{17}/{256}$ for all $x,y\in\R$.

\begin{proof}
    Since $f(\bullet,\bullet;\zeta)$ takes values only from $f_k(\bullet,\bullet;\zeta)$ and $q(\bullet;1)$, and $q(x;\gamma)\ge-\gamma^2/16$ for all $x\in\R$, it suffices to bound $f_k(\bullet,\bullet;\zeta)$. Let us divide the subsequent analysis into two cases.
    \begin{itemize}
        \item If $(x,y)^{\T}\notin (x_k,0)^{\T}+{\zeta\bbB^2}/{32}$, then it immediately follows from (\ref{eq:fact-fk}) and $\zeta\le 1$ that 
        $$
            f_k(x,y;\zeta)=h(x-x_k,y;\zeta/4)\ge -{1}/{256}-{1}/{16}=-{17}/{256}.
        $$

        \item If $(x,y)^{\T}\in (x_k,0)^{\T}+\zeta\bbB^2/32$, then it follows from (\ref{eq:fact-fk}) and $\zeta\le 1$ again that
            $$
            \begin{aligned}
                f_k(x,y;\zeta)={}&g(x-x_k,y;\zeta/32)\\
                ={}&p_2(x-x_k,y;\zeta/32)\cdot\left(-\frac{1}{2} (x-x_k)^2\right)\\
                {}&+\bigl(1-p_2(x-x_k,y;\zeta/32)\bigr)\cdot\left(-\frac{1}{2} (x-x_k)^2-\frac{1}{2} y^2\right)\\
                \ge{}& {-\frac{1}{2}}\cdot(\zeta/32)^2-\frac{1}{2}\cdot(\zeta/32)^2 \ge -\frac{1}{1024}\ge -\frac{17}{256}.
            \end{aligned}
            $$
    \end{itemize}
    In summary, we see that $f_k(x,y;\zeta)\ge -{17}/{256}$ for all $x,y\in\R$, and thus $f(x,y;\zeta)\ge -{17}/{256}$ for all $x,y\in\R$, as desired. The proof is complete.
\end{proof}

\clearpage

\section{Conjecture on extension to local oracles}
At first glance, it appears rather straightforward to extend Theorem~\ref{thm:lb-1} and Corollary~\ref{cor:lb-1} from second-order oracles to local oracles (see, e.g.,~\citep[Section~2.3]{tian2024no}): It should suffice to slightly flatten the landscape of $p_1(\bullet;\delta)$ near $0$; e.g., one may consider the 
variant $\overline{p}_1(\bullet;\delta,\lambda):\R\rightarrow\R$ of $p_1(\bullet;\delta)$ 
defined by
$$
    \overline{p}_1(x;\delta,\lambda):=
    \begin{dcases}
        1, & |x|\le\lambda\delta,\\
        \left(\frac{(|x|-\lambda\delta)^2}{(\delta-\lambda\delta)^2}-1\right)^4, & \lambda\delta\le|x|\le\delta,\\
        0, & \textnormal{otherwise},
    \end{dcases}
$$
where 
$\lambda\in(0,1)$
determines the extent of such a flattening. 

\begin{figure}[!ht]
\resizebox{\linewidth}{!}{%
\centering
\pgfmathsetmacro{\mylam}{0.1}
\begin{tikzpicture}[
  declare function={
    p1bar(\x) = (abs(\x) < \mylam) * 1
              + (abs(\x) >= \mylam && abs(\x) < 1) * ((abs(\x)-\mylam)^2/(1-\mylam)^2-1)^4
              + (abs(\x) >= 1) * 0;
    m(\x) = ((abs(\x)-\mylam)^2/(1-\mylam)^2-1)^4;
  }
]
\begin{axis}[
    axis lines = left,
    grid=major,
    xlabel = $x$,
    ylabel = {$\overline{p}_1(x;1,0.1)$},
height=0.33*\linewidth,
width=\linewidth,
]
\addplot [
    domain=-1.2:1.2, 
    samples=500,
    color=black,
]
{p1bar(x)};
\end{axis}
\end{tikzpicture}}
\caption{The landscape of $\overline{p}_1(\bullet;1,0.1)$.}
\label{fig:p1bar-func}
\end{figure}


Conceivably, as soon as $\lambda$ is sufficiently small, $\overline{p}_1(\bullet;\delta,\lambda)$ should almost coincide with $p_1(\bullet;\delta)$, thereby inheriting all its desired properties;
in fact, numerical evidence suggests that setting $\lambda=0.1$ already suffices by far.
However, as it turns out, the analysis of the $\overline{p}_1(\bullet;\delta,\lambda)$-based construction is entirely a nightmare to carry out.
This is mainly due to the fact that 
the corresponding $\overline{p}_2(\bullet,\bullet;\delta,\lambda)$ defined by $\overline{p}_2(x,y;\delta,\lambda):=\overline{p}_1\bigl(\sqrt{x^2+y^2};\delta,\lambda\bigr)$ is no longer a piecewise polynomial as $p_2(\bullet,\bullet;\delta)$, but rather a piecewise polynomial in \textit{norms}. 
That said, we remain firmly convinced that the extension is valid.

\begin{conjecture}\label{conj:local-oracles}
    Theorem~\ref{thm:lb-1} and Corollary~\ref{cor:lb-1} remain valid
    (in substance) even under
    local oracles.
\end{conjecture}


\end{document}

%% file: sample.tikzstyles
\tikzstyle{red dot}=[fill=red, draw=black, shape=circle]
\tikzstyle{green dot}=[fill=green, draw=black, shape=circle]
\tikzstyle{medium box}=[fill=white, draw=black, shape=rectangle, minimum width=1cm, minimum height=1cm]
\tikzstyle{0.1pt}=[fill=black, draw=black, shape=circle, minimum size=0.1pt, inner sep=0pt]
\tikzstyle{0.5pt}=[fill=black, draw=black, shape=circle, minimum size=0.5pt, inner sep=0pt]
\tikzstyle{1pt}=[fill=black, draw=black, shape=circle, minimum size=1pt, inner sep=0pt]
\tikzstyle{2pt}=[fill=black, draw=black, shape=circle, minimum size=2pt, inner sep=0pt]
\tikzstyle{3pt}=[fill=black, draw=black, shape=circle, minimum size=3pt, inner sep=0pt]
\tikzstyle{3pthide}=[fill=black, draw=black, shape=circle, minimum size=3pt, inner sep=0pt, opacity=0.1]
\tikzstyle{5pt}=[fill=black, draw=black, shape=circle, minimum size=5pt, inner sep=0pt]
\tikzstyle{big circle}=[fill=none, draw=black, shape=circle, minimum size=20cm, inner sep=0pt, ultra thick]
\tikzstyle{0.5none}=[fill=none, draw=none, shape=circle, scale=0.5]
\tikzstyle{small circle}=[fill=none, draw=black, shape=circle, minimum size=5cm, inner sep=0pt, ultra thick]

\tikzstyle{directional}=[>=stealth, ->]
\tikzstyle{thick direc}=[>=stealth, ->, very thick]
\tikzstyle{dashes}=[-, densely dotted, thick]
\tikzstyle{wavy}=[-, snake it, thick]
\tikzstyle{big dashes}=[-, thick, dashed, dash pattern=on 4mm off 2mm, fill=cyan]
\tikzstyle{blue directional}=[draw=blue, ->, very thick, >=stealth]
\tikzstyle{red directional}=[draw=red, ->, very thick, >=stealth]
\tikzstyle{green directional}=[draw=green, ->, very thick, >=stealth]
\tikzstyle{thin line}=[-, very thin]
\tikzstyle{purple directional}=[->, draw=magenta, very thick, >=stealth]
\tikzstyle{pure shade}=[-, draw=none, fill={rgb,255: red,191; green,191; blue,191}, opacity=0.5]
\tikzstyle{pure rshade}=[-, draw=none, fill={rgb,255: red,255; green,0; blue,0}, opacity=0.5]
\tikzstyle{pure gshade}=[-, draw=none, fill={rgb,255: red,0; green,255; blue,0}, opacity=0.5]
\tikzstyle{blue line}=[-, fill=none, draw=blue, very thick]
\tikzstyle{purple line}=[-, draw=magenta, very thick]
\tikzstyle{line shade}=[-, draw=black, fill={rgb,255: red,191; green,191; blue,191}]
\tikzstyle{line shade 2}=[-, draw=black, fill={rgb,255: red,128; green,128; blue,128}]
\tikzstyle{thick line}=[-, very thick]
\tikzstyle{tlhide}=[-, very thick, opacity=0.1]
\tikzstyle{green shadedline}=[-, draw=green, fill=green, opacity=0.3]
\tikzstyle{blue shadedline}=[-, draw=blue, fill=blue, opacity=0.3]
\tikzstyle{red shadedline}=[-, draw=red, fill=red, opacity=0.5]
\tikzstyle{cyan shadedline}=[-, draw=cyan, fill=cyan, opacity=0.3]
\tikzstyle{magenta shadedline}=[-, draw=magenta, fill=magenta, opacity=0.2]
\tikzstyle{yellow shadedline}=[-, draw=yellow, fill=yellow, opacity=0.3]
\tikzstyle{violet shadedline}=[-, draw=violet, fill=violet, opacity=0.3]
\tikzstyle{teal shadedline}=[-, draw=teal, fill=teal, opacity=0.3]
\tikzstyle{olive shadedline}=[-, draw=olive, fill=olive, opacity=0.2]

%% file: covering.tikz
\begin{tikzpicture}
	\begin{pgfonlayer}{nodelayer}
		\node [style=none] (3) at (-16.5, 0) {};
		\node [style=none] (4) at (-16, 0.5) {};
		\node [style=none] (5) at (-15.5, 0) {};
		\node [style=none] (6) at (-16, -0.5) {};
		\node [style=none] (8) at (15.5, 0) {};
		\node [style=none] (9) at (16, 0.5) {};
		\node [style=none] (10) at (16.5, 0) {};
		\node [style=none] (11) at (16, -0.5) {};
		\node [style=none] (13) at (-0.5, 0) {};
		\node [style=none] (14) at (0, 0.5) {};
		\node [style=none] (15) at (0.5, 0) {};
		\node [style=none] (16) at (0, -0.5) {};
		\node [style=none] (22) at (-1, 4) {};
		\node [style=none] (23) at (1, 4) {};
		\node [style=none] (24) at (1, -4) {};
		\node [style=none] (25) at (-1, -4) {};
		\node [style=none] (26) at (15, 4) {};
		\node [style=none] (27) at (17, 4) {};
		\node [style=none] (28) at (17, -4) {};
		\node [style=none] (29) at (15, -4) {};
		\node [style=none] (30) at (-17, 4) {};
		\node [style=none] (31) at (-15, 4) {};
		\node [style=none] (32) at (-15, -4) {};
		\node [style=none] (33) at (-17, -4) {};
		\node [style=none] (34) at (-22, 4) {};
		\node [style=none] (35) at (-22, -4) {};
		\node [style=none] (36) at (22, 4) {};
		\node [style=none] (37) at (22, -4) {};
		\node [style=none] (38) at (23.5, 4) {$y=1/4$};
		\node [style=none] (39) at (23.5, -4) {$y=-1/4$};
		\node [style=none] (42) at (-20, 6) {};
		\node [style=none] (43) at (-12, 6) {};
		\node [style=none] (44) at (-12, -6) {};
		\node [style=none] (45) at (-20, -6) {};
		\node [style=none] (46) at (12, 6) {};
		\node [style=none] (47) at (20, 6) {};
		\node [style=none] (48) at (20, -6) {};
		\node [style=none] (49) at (12, -6) {};
		\node [style=none] (50) at (-4, 6) {};
		\node [style=none] (51) at (4, 6) {};
		\node [style=none] (52) at (4, -6) {};
		\node [style=none] (53) at (-4, -6) {};
		\node [style=none] (54) at (-12, 6.5) {$x=x_0+\zeta/4$};
		\node [style=none] (55) at (-20, 6.5) {$x=x_0-\zeta/4$};
		\node [style=none] (56) at (4, 6.5) {$x=x_1+\zeta/4$};
		\node [style=none] (57) at (-4, 6.5) {$x=x_1-\zeta/4$};
		\node [style=none] (58) at (20, 6.5) {$x=x_2+\zeta/4$};
		\node [style=none] (59) at (12, 6.5) {$x=x_2-\zeta/4$};
		\node [style=none] (60) at (-16, 0) {$\mathbb{B}_0$};
		\node [style=none] (61) at (0, 0) {$\mathbb{B}_1$};
		\node [style=none] (62) at (16, 0) {$\mathbb{B}_2$};
		\node [style=none] (63) at (16, 3) {$\mathbb{P}_2$};
		\node [style=none] (64) at (-16, 3) {$\mathbb{P}_0$};
		\node [style=none] (65) at (0, 3) {$\mathbb{P}_1$};
	\end{pgfonlayer}
	\begin{pgfonlayer}{edgelayer}
		\draw [bend right=45] (6.center) to (5.center);
		\draw [bend right=45] (5.center) to (4.center);
		\draw [bend right=45] (4.center) to (3.center);
		\draw [bend right=45] (3.center) to (6.center);
		\draw [bend right=45] (11.center) to (10.center);
		\draw [bend right=45] (10.center) to (9.center);
		\draw [bend right=45] (9.center) to (8.center);
		\draw [bend right=45] (8.center) to (11.center);
		\draw [bend right=45] (16.center) to (15.center);
		\draw [bend right=45] (15.center) to (14.center);
		\draw [bend right=45] (14.center) to (13.center);
		\draw [bend right=45] (13.center) to (16.center);
		\draw [style=dashes] (22.center) to (23.center);
		\draw [style=dashes] (23.center) to (24.center);
		\draw [style=dashes] (24.center) to (25.center);
		\draw [style=dashes] (25.center) to (22.center);
		\draw [style=dashes] (26.center) to (27.center);
		\draw [style=dashes] (27.center) to (28.center);
		\draw [style=dashes] (28.center) to (29.center);
		\draw [style=dashes] (29.center) to (26.center);
		\draw [style=dashes] (30.center) to (31.center);
		\draw [style=dashes] (31.center) to (32.center);
		\draw [style=dashes] (32.center) to (33.center);
		\draw [style=dashes] (33.center) to (30.center);
		\draw [style=dashes] (36.center) to (34.center);
		\draw [style=dashes] (37.center) to (35.center);
		\draw (45.center) to (42.center);
		\draw (43.center) to (44.center);
		\draw (49.center) to (46.center);
		\draw (47.center) to (48.center);
		\draw (53.center) to (50.center);
		\draw (51.center) to (52.center);
	\end{pgfonlayer}
\end{tikzpicture}